\newtheorem{theorem}{Theorem}[section]
\newtheorem{lemma}[theorem]{Lemma}
\newtheorem{proposition}[theorem]{Proposition}
\theoremstyle{definition}
\newtheorem{definition}[theorem]{Definition}
\newtheorem{observation}[theorem]{Observation}
\theoremstyle{remark}
\newtheorem{remark}[theorem]{Remark}
\numberwithin{equation}{section}
\DeclareMathOperator{\Aff}{Aff}
\DeclareMathOperator{\Hol}{Hol}
\DeclareMathOperator{\Imaginary}{Im}
\DeclareMathOperator{\Real}{Re}
\DeclareMathOperator{\Cb}{\mathbb{C}}
\DeclareMathOperator{\Nb}{\mathbb{N}}
\DeclareMathOperator{\Pb}{\mathbb{P}}
\DeclareMathOperator{\Rb}{\mathbb{R}}
\DeclareMathOperator{\Cc}{\mathcal{C}}
\DeclareMathOperator{\Nc}{\mathcal{N}}
\DeclareMathOperator{\Oc}{\mathcal{O}}
\DeclareMathOperator{\Uc}{\mathcal{U}}
\DeclareMathOperator{\Vc}{\mathcal{V}}
\newcommand{\abs}[1]{\left|#1\right|}
\newcommand{\norm}[1]{\left\|#1\right\|}
\newcommand{\wh}[1]{\widehat{#1}}
\begin{document}

\title[Gromov hyperbolicity and the Kobayashi metric]{Gromov hyperbolicity, the Kobayashi metric, and $\Cb$-convex sets}

\author{Andrew M. Zimmer}
\address{Department of Mathematics, University of Chicago, Chicago, IL 60637.}
\email{aazimmer@uchicago.edu}
\thanks{This material is based upon work supported by the National Science Foundation under Grant Number NSF 1400919.}

\subjclass[2010]{32F45, 53C23, 32F18}

\date{}

\begin{abstract}In this paper we study the global geometry of the Kobayashi metric on domains in complex Euclidean space.  We are particularly interested in developing necessary and sufficient conditions for the Kobayashi metric to be Gromov hyperbolic. For general domains, it has been suggested that a non-trivial complex affine disk in the boundary is an obstruction to Gromov hyperbolicity. This is known to be the case when the set in question is convex. In this paper we first extend this result to $\Cb$-convex sets with $C^1$-smooth boundary. We will then show that some boundary regularity is necessary by producing in any dimension examples of open bounded $\Cb$-convex sets where the Kobayashi metric is Gromov hyperbolic but whose boundary contains a complex affine ball of complex codimension one.
\end{abstract}

\maketitle

\section{Introduction}

In this paper we study the geometry of the Kobasyashi distance $K_{\Omega}$ on domains $\Omega \subset \Cb^d$. Much is known about the behavior of the infinitesimal Kobayashi metric on certain types of domains (see for instance~\cite{JP2013} and the references therein), but very little is known about the global behavior of the Kobayashi distance function. 

It is well known that the unit ball endowed with the Kobayashi metric is isometric to complex hyperbolic space and in particular is an example of a negatively curved Riemannian manifold. One would then suspect that when $\Omega \subset \Cb^d$ is a domain close to the unit ball then $(\Omega, K_\Omega)$ should be negatively curved in some sense. Unfortunately, for general domains the Kobayashi metric is no longer Riemannian and thus will no longer have curvature in a local sense. Instead one can ask if the Kobayashi metric satisfies a coarse notion of negative curvature from geometric group theory called Gromov hyperbolicity.

Gromov hyperbolic metric spaces have been intensively studied and have a number of remarkable properties. For instance:
\begin{enumerate}
\item Iterations of contractions on Gromov hyperbolic metric spaces are very well understood and in particular an analogue of the Wolff--Denjoy theorem always holds~\cite{K2001}, 
\item Given a quasi-isometry $f: X \rightarrow Y$ between two proper geodesic Gromov hyperbolic metric spaces there exists a continuous extension to natural compactifications of $X$ and $Y$ (see for instance~\cite[Chapter III.H, Theorem 3.9]{BH1999}), 
\item (Geodesic shadowing) Every quasi-geodesic is within a bounded distance of an actual geodesic (see for instance~\cite[Chapter III.H, Theorem 1.7]{BH1999}).
\end{enumerate}
In particular, understanding the domains for which the Kobayashi metric is Gromov hyperbolic could lead to new insights about the iteration theory of holomorphic maps and the boundary extension properties of proper holomorphic maps. We should also mention that for the Kobayashi metric it is often easy to construct quasi-geodesics (see for instance Lemma~\ref{lem:quasi_geod} below) but difficult to find actual geodesics. Thus the geodesic shadowing property mentioned above can be a useful tool in understanding the Kobayashi distance function.

Balogh and Bonk~\cite{BB2000} proved that the Kobayashi metric is Gromov hyperbolic when the domain is strongly pseudo-convex. Gaussier and Seshadri~\cite{GS2013} and Nikolov, Thomas, and Trybula~\cite{NTT2014} proved, under certain boundary regularity conditions, that a non-trivial complex affine disk in the boundary of a convex set is an obstruction to the Gromov hyperbolicity of the Kobayashi metric. Extending this work, we recently characterized the bounded convex domains with smooth boundary for which the Kobayashi metric is Gromov hyperbolic:

\begin{theorem}\label{thm:finite_type}\cite{Z2014}
Suppose $\Omega$ is a bounded convex open set with $C^\infty$ boundary. Then $(\Omega, K_{\Omega})$ is Gromov hyperbolic if and only if $\partial \Omega$ has finite type in the sense of D'Angelo.
\end{theorem}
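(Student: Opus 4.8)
The plan is to prove both directions separately.

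For the forward direction (finite type implies Gromov hyperbolic), I would exploit the fact that for bounded convex domains with smooth boundary, finite type in the sense of D'Angelo is equivalent to line type finiteness, which gives uniform polynomial lower bounds on how the boundary curves away from any complex tangent line. Concretely, finite type of order $2m$ would yield estimates of the form $\delta_\Omega(z) \gtrsim \abs{z-\xi}^{2m}$ for $z$ in the complex tangential directions at a boundary point $\xi$, where $\delta_\Omega$ denotes Euclidean distance to the boundary.

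The main engine of the proof would be to use these estimates to control the infinitesimal Kobayashi metric. For convex domains there are well-known two-sided estimates for the Kobayashi metric in terms of the Euclidean distance to the boundary and the distance in complex tangential directions (the Minkowski functional / extremal disk machinery). The key step would then be to verify a visibility-type or ``Gromov product'' estimate: I would show that for any two points whose associated geodesics pass near the boundary, the finite-type condition forces geodesics to penetrate into the interior in a controlled way, so that triangles are uniformly thin. This is the approach in the spirit of Balogh--Bonk, who established Gromov hyperbolicity for strongly pseudoconvex domains by obtaining precise asymptotics for the Kobayashi distance; here the finite type replaces strong pseudoconvexity, and the polynomial estimates replace the quadratic ones.

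For the reverse direction (not finite type implies not Gromov hyperbolic), the strategy is the contrapositive of the obstruction results of Gaussier--Seshadri and Nikolov--Thomas--Trybula. If $\partial\Omega$ is not of finite type, then by the convexity and smoothness there exists a boundary point through which a nontrivial complex affine disk lies in $\partial\Omega$, or at least a sequence of boundary points along which the type blows up. I would use such a flat disk to construct a sequence of geodesic triangles that fail the thin-triangle condition: flatness in the boundary allows one to embed large pieces of a flat or product-like metric space (for instance a bi-Lipschitz copy of a half-plane cross a disk) isometrically up to bounded error, and such product spaces are not Gromov hyperbolic. Making the flat-disk-to-non-hyperbolicity implication quantitative, and in particular extracting an honest affine disk in the boundary from the mere failure of finite type, would be the main obstacle, since it requires a compactness/normal-families argument combined with the convex geometry of $\partial\Omega$.

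Overall the hardest part I anticipate is the forward direction's passage from the pointwise polynomial boundary estimates to a global thin-triangles statement: localization near the boundary, uniformity of the constants over all boundary points (using compactness of $\partial\Omega$ and smoothness to get uniform type), and gluing the local estimates into a genuine Gromov product inequality. The smoothness hypothesis is what makes the type function upper semicontinuous and hence bounded, which is precisely the uniformity I would need.
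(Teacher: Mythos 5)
First, a point of orientation: the paper does not actually prove this theorem --- it is imported from \cite{Z2014} --- so the only in-paper material to compare against is the template reused in Section 6 for Theorem~\ref{thm:main}, which is the same as the proof of the quoted theorem in \cite{Z2014}: argue by contradiction, take a sequence of geodesic triangles violating $\delta$-thinness, rescale by affine maps so that the bad points converge (Theorem~\ref{thm:gaussier}), pass to a limit polynomial domain in the local Hausdorff topology, and derive a contradiction from the boundary behavior of geodesics in the limit. Your forward direction proposes a genuinely different route --- Balogh--Bonk style asymptotics for $K_\Omega$ plus a direct verification of thinness --- but as written it has a gap rather than a proof: the step ``verify a visibility-type or Gromov product estimate \dots so that triangles are uniformly thin'' is the entire content of the theorem, and the additive-error formula for the distance that makes the Balogh--Bonk computation work for strongly pseudoconvex domains is not available for convex domains of finite type. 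The known two-sided finite-type estimates are infinitesimal, and integrating them into a four-point inequality is precisely the open-ended hard part; the rescaling argument exists to avoid ever needing such asymptotics.

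Second, your reverse direction contains a concretely false step: for a bounded convex domain with $C^\infty$ boundary, failure of finite type does \emph{not} imply that $\partial \Omega$ contains a nontrivial complex affine (or holomorphic) disk. The standard example $\Imaginary(z_0) > e^{-1/\abs{z_1}^2}$, suitably truncated to be bounded, has infinite line type at the origin but no analytic disk in its boundary, so Theorem~\ref{thm:no_flats} cannot be applied to $\Omega$ itself, and there is no ``flat piece'' of $\partial\Omega$ in which to embed a product-like metric space. Your hedge --- that one should instead run a compactness/normal-families argument along a sequence where the type blows up --- is the right instinct, and it is exactly what \cite{Z2014} does: one rescales at an infinite-type boundary point to produce a \emph{limit} convex domain whose boundary does contain a nontrivial affine disk, checks that Gromov hyperbolicity passes to the local Hausdorff limit, and only then invokes the no-flats obstruction on the limit domain. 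Without that rescaling step (or the quantitative embedding you allude to but do not supply), neither implication is established.
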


In this paper we explore what happens when convexity is relaxed to $\Cb$-convexity. Recall that an open set $\Omega \subset \Cb^d$ is called \emph{$\Cb$-convex} if its intersection with any complex line is either empty or contractible. See~\cite{APS2004} for the basic properties of these sets. 

Estimates on the infinitesimal Kobayashi metric were established for $\Cb$-convex sets in~\cite{NPZ2011}. In particular, the Bergman, Carath{\'e}odory, and Kobayashi metrics are all bi-Lipschitz~\cite[Proposition 1, Theorem 12]{NPZ2011} for $\Cb$-convex sets which do not contain any complex affine lines. Since Gromov hyperbolicity is an quasi-isometry invariant, the results of this paper could be stated for the Bergman or Carath{\'e}odory metrics instead of the Kobayashi metric. 

It has been suggested by several authors~\cite{BB2000, B2014, GS2013} that ``flatness'' in the boundary is an obstruction to the Kobayashi metric being Gromov hyperbolic. When the domain in question is convex this is indeed the case:

\begin{theorem}\label{thm:no_flats}\cite{GS2013, NTT2014, Z2014}
Suppose $\Omega \subset \Cb^d$ is a bounded convex open set and $(\Omega, K_{\Omega})$ is Gromov hyperbolic. If $\Delta \subset \Cb$ is the unit disk, then every holomorphic map $\varphi: \Delta \rightarrow \partial \Omega$ is constant. 
\end{theorem}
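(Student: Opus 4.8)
The plan is to prove the contrapositive: assuming there is a non-constant holomorphic map $\varphi\colon \Delta \to \partial\Omega$, I will produce a \emph{quasi-flat} in $(\Omega, K_\Omega)$, that is, a quasi-isometric embedding of the Euclidean plane. Since a proper geodesic Gromov hyperbolic space admits no quasi-isometrically embedded flat — a large Euclidean triangle, transported by the geodesic shadowing property to a nearby geodesic triangle, has its barycenter arbitrarily far from all three sides, which violates uniform $\delta$-thinness as the triangle grows — the existence of such a quasi-flat contradicts the hypothesis that $(\Omega, K_\Omega)$ is Gromov hyperbolic.

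First I would normalize the disk. Fix $p = \varphi(0)$ and a real supporting hyperplane of $\Omega$ at $p$, written as $\{\Real L = \Real L(p)\}$ for a $\Cb$-linear functional $L$. Then $\Real(L\circ\varphi)$ is harmonic on $\Delta$ with an interior maximum at $0$, hence constant, so $L\circ\varphi$ is constant and $\varphi(\Delta)$ lies in the complex supporting hyperplane $H = \{L = L(p)\}$. Because $H$ supports $\Omega$, the compact convex set $H\cap\overline\Omega$ lies entirely in $\partial\Omega$. Let $M$ be the smallest complex affine subspace of $H$ containing $\varphi(\Delta)$. If $\operatorname{conv}(\varphi(\Delta))$ had empty interior in $M$ it would lie in a real supporting hyperplane $\{\Real\Lambda = c\}$ of $M$, and the same maximum-principle argument applied to $\Lambda\circ\varphi$ would force $\varphi(\Delta)$ into a proper complex affine subspace, contradicting minimality of $M$. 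Hence $\operatorname{conv}(\varphi(\Delta))$ has nonempty interior in $M$ and contains a non-trivial complex affine disk lying in $\partial\Omega$. After an affine change of coordinates I may therefore assume $\Omega\subset\{\Real z_1 < 0\}$ and $\{(0,\zeta,0,\dots,0) : \abs{\zeta}\le r\}\subset\partial\Omega$ for some $r>0$.

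Next I would build the flat from two geodesic rays. Fixing an interior point $o=(-a,0,\dots,0)\in\Omega$, convexity gives $F(s,u):=(-a\, e^{-s},\, r(1-e^{-u}),0,\dots,0)\in\Omega$ for all $s,u\ge 0$: this two-parameter family runs toward the center of the disk as $s\to\infty$ and toward its rim as $u\to\infty$. The upper bound $K_\Omega(F(s,u),F(s',u'))\le \tfrac12\max\{\abs{s-s'},\abs{u-u'}\}+O(1)$ should follow from the distance-decreasing property applied along the explicit affine disks foliating this region, together with the fact that the coordinate curves are uniform quasi-geodesics (Lemma~\ref{lem:quasi_geod}). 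For the normal direction, projecting to the $z_1$-coordinate and using $\Omega\subset\{\Real z_1<0\}$ gives $K_\Omega \ge K_{\{\Real z_1<0\}} \gtrsim \abs{s-s'}$, since the $\Cb^{d-1}$ factor contributes nothing to the Kobayashi distance of the half-space.

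The main obstacle is the matching lower bound in the \emph{tangential} direction, i.e.\ separating $F(s,u)$ and $F(s,u')$ at the hyperbolic rate $\tfrac12\abs{u-u'}$ uniformly in $s$. Here I would invoke Lempert's theorem $K_\Omega = C_\Omega$ and test against Cayley transforms of the $\Cb$-linear functionals defining supporting hyperplanes at the rim $\abs{\zeta}=r$ of the flat disk; these functions $\Omega\to\Delta$ must be shown to have Carathéodory contribution growing like $\abs{u-u'}$ as the points approach the rim, which is exactly where the $\Cb$-convexity estimates of~\cite{NPZ2011} enter. Assembling the normal and tangential estimates realizes $(s,u)\mapsto F(s,u)$ as a quasi-isometric embedding of $(\Rb_{\ge0}^2,\ell^\infty)$, the desired quasi-flat. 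An alternative, and probably more economical, route replaces the explicit flat by a \emph{visibility} argument: one shows that the two rays above are quasi-geodesics (via Lemma~\ref{lem:quasi_geod}) which fellow-travel near $\partial\Omega$ rather than being joined by geodesics that enter a fixed compact set, an impossibility in a Gromov hyperbolic space by geodesic shadowing. Either way the contradiction completes the proof.
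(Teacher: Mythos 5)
Your reduction from a holomorphic disk to an affine disk via the maximum principle is fine (this is the content of the \cite{FS1998} remark), and the overall strategy of exhibiting a quasi-flat is a legitimate way to contradict Gromov hyperbolicity. But the proof has a genuine gap exactly where you flag ``the main obstacle'': the tangential lower bound $K_\Omega(F(s,u),F(s,u'))\gtrsim\abs{u-u'}$ \emph{uniformly in $s$} is never established, and as stated it is false in general. Convexity only guarantees that the slice $\Omega\cap\{z_1=-ae^{-s}\}$ \emph{contains} a disk of radius roughly $r(1-e^{-s})$ in the $z_2$-line; nothing prevents it from being strictly larger (say of radius $r+\eta(s)$ with $\eta(s)\to 0$ slowly), in which case $\delta_\Omega(F(s,u);e_2)\approx re^{-u}+\eta(s)$ and the two points $F(s,u),F(s,u')$ stay a bounded Kobayashi distance apart once $e^{-u},e^{-u'}\ll\eta(s)$. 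Any projection or Carath\'eodory/Lempert functional argument runs into the same issue: the image of $\Omega$ under the relevant linear functional can be strictly larger than $r\Delta$. One could try to salvage this by choosing the box $[0,S]\times[0,U]$ with $U$ small relative to the rate at which the slices shrink, but that requires a quantitative analysis you have not supplied. A secondary error: the claim that ``convexity gives $F(s,u)\in\Omega$ for all $s,u\ge 0$'' is unjustified --- the convex hull of $o$ and the boundary disk only contains $F(s,u)$ when $1-e^{-u}\le 1-e^{-s}$, i.e.\ $u\le s$, so for $u>s$ membership in $\Omega$ can fail.

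It is worth contrasting this with the route the paper takes (for the $\Cb$-convex analogue, and in \cite{Z2014} for this theorem), which is designed precisely to avoid any tangential \emph{lower} bound. There one shows only that the inward rays based at two points of the \emph{open} affine disk are uniform quasi-geodesics (Lemma~\ref{lem:quasi_geod}) that fellow-travel (Proposition~\ref{prop:C1} gives the tangential \emph{upper} bound, which is the easy direction); Gromov hyperbolicity itself, via Proposition~\ref{prop:gromov_basic}, then upgrades this to a bound that is uniform over the disk and hence passes to its closure. The contradiction is extracted only at the rim of a maximal disk, using the elementary one-variable estimate of Lemma~\ref{lem:convex_lower_bd_2} through Proposition~\ref{prop:asym_geom}. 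In other words, hyperbolicity is used as a tool to propagate estimates rather than as the target of a flat construction, and this is what sidesteps the delicate uniform tangential estimate on which your proposal currently founders.
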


\begin{remark} \ \begin{enumerate}
\item Theorem~\ref{thm:no_flats} was proven when $\Omega$ is bounded and $\partial \Omega$ is $C^\infty$ in~\cite{GS2013}, when $\partial \Omega$ is $C^{1,1}$ and $d=2$ in~\cite{NTT2014}, and in full generality in~\cite{Z2014}. 
\item By~\cite{FS1998}, if $\Omega$ is a convex set then $\partial \Omega$ contains a non-trivial complex affine disk if and only if $\partial \Omega$ contains a non-trivial holomorphic disk. 
\item The above theorem also holds for convex open sets which do no not contain any complex affine lines, see~\cite{Z2014}.
\end{enumerate}
 \end{remark}
 
The first part of this paper is devoted to proving an extension of Theorem~\ref{thm:no_flats} for bounded $\Cb$-convex sets whose boundary is $C^1$.  

\begin{theorem}\label{thm:no_flats_C-convexity}
Suppose $\Omega \subset \Cb^d$ is a bounded $\Cb$-convex open set, $\partial \Omega$ is a $C^1$ hypersurface, and $(\Omega, K_{\Omega})$ is Gromov hyperbolic. If $\Delta \subset \Cb$ is the unit disk, then every holomorphic map $\varphi: \Delta \rightarrow \partial \Omega$ is constant. 
\end{theorem}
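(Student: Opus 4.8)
The plan is to argue by contradiction and reduce to the convex case, Theorem~\ref{thm:no_flats}, via an affine rescaling. Suppose some non-constant holomorphic map $\varphi:\Delta\to\partial\Omega$ exists; after precomposing with an automorphism of $\Delta$ I may assume $\varphi'(0)\neq0$, and I set $p=\varphi(0)$ and $v=\varphi'(0)$. Since $\varphi$ is a holomorphic disk lying in the $C^1$ hypersurface $\partial\Omega$, the tangent complex line $\Cb v$ of its image at $p$ lies in the real tangent space $T_p\partial\Omega$, and being a complex line it lies in the complex tangent hyperplane $T_p^\Cb\partial\Omega$. Because $\Omega$ is $\Cb$-convex with $C^1$ boundary, by \cite{APS2004} it is linearly convex and the unique supporting complex hyperplane at $p$ is $H_p:=p+T_p^\Cb\partial\Omega$. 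Hence $p+\Cb v\subset H_p$ is a complex line through $p$ that never enters $\Omega$; this maximally flat tangential direction $v$ is what will drive the rescaling.

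Next I would blow up along the inner normal at $p$. I would pick base points $p_n\in\Omega$ approaching $p$ non-tangentially and complex affine maps $\Lambda_n$ (translations composed with anisotropic dilations that expand the normal direction and are calibrated to the flat direction $v$) so that $\Omega_n:=\Lambda_n(\Omega)$ converge, in the local Hausdorff sense, to a limit domain $\hat\Omega$. The calibration is dictated by the supporting complex line $p+\Cb v$: along $v$ the boundary is flat, so $v$ is kept at unit scale and the limit retains a non-trivial complex affine disk in direction $v$ in its boundary. Crucially, the $C^1$ regularity should force $\hat\Omega$ to be \emph{convex}, the continuously varying real tangent hyperplanes of $\partial\Omega$ straightening under the blow-up into honest supporting real hyperplanes of $\hat\Omega$.

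Each $\Lambda_n$ is a complex affine automorphism of $\Cb^d$, hence a biholomorphism, so $(\Omega_n,K_{\Omega_n})$ is isometric to $(\Omega,K_\Omega)$ and in particular Gromov $\delta$-hyperbolic with a single constant $\delta$ independent of $n$. The estimates of \cite{NPZ2011} for the Kobayashi metric on $\Cb$-convex sets give locally uniform convergence $K_{\Omega_n}\to K_{\hat\Omega}$, and Gromov $\delta$-hyperbolicity is inherited by such a limit; thus $(\hat\Omega,K_{\hat\Omega})$ is Gromov hyperbolic. But $\hat\Omega$ is a convex domain whose boundary contains a non-trivial complex affine disk, so by Theorem~\ref{thm:no_flats} it is \emph{not} Gromov hyperbolic, a contradiction; therefore $\varphi$ must be constant.

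The main obstacle is the rescaling construction: one must calibrate the anisotropic dilations so that the limit exists, is nondegenerate, retains the flat disk as genuinely non-trivial, and, decisively, is actually convex rather than merely $\Cb$-convex. This convexity of the blow-up is exactly where the $C^1$ hypothesis is indispensable; for a general $\Cb$-convex boundary only linear convexity survives in the limit, which is consistent with the second half of the paper, where a $\Cb$-convex domain with a codimension-one affine ball in its (non-smooth) boundary is nonetheless Gromov hyperbolic. A secondary and more routine point is justifying that Gromov $\delta$-hyperbolicity passes to the Hausdorff limit, which follows from the uniform constant supplied by the affine isometries together with the convergence of Kobayashi distances from \cite{NPZ2011}.
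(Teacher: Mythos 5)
Your strategy is genuinely different from the paper's proof, which never rescales: it shows that inward normal lines are quasi-geodesics (Lemma~\ref{lem:quasi_geod}), that two such quasi-geodesics landing in the same component of the flat disk stay boundedly close (Proposition~\ref{prop:gromov_basic} together with Proposition~\ref{prop:C1}), and then derives a contradiction from Proposition~\ref{prop:asym_geom} at a boundary point of that component. Your version has a gap at its decisive step: the claim that the $C^1$ hypothesis forces the blow-up limit $\hat\Omega$ to be \emph{convex}. The $C^1$ regularity of a $\Cb$-convex boundary gives supporting \emph{complex} hyperplanes, i.e.\ linear convexity, and that is all that survives affine rescaling and local Hausdorff limits --- exactly what you concede happens in general. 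Convexity of $\hat\Omega$ would require supporting \emph{real} hyperplanes, and there is no mechanism for producing them: locally $\partial\Omega$ is a graph $\{\Imaginary(z_0)=F\}$ with $F$ merely $C^1$, vanishing to first order at $p$ and with no convexity, and anisotropic dilations calibrated to the flat direction do not turn a non-convex $F$ into a convex limit. Since your entire contradiction rests on applying Theorem~\ref{thm:no_flats} to $\hat\Omega$, and that theorem holds only for convex sets (bounded, or at least containing no complex affine lines), the argument does not close. Even granting convexity, you would still have to show $\hat\Omega$ is $\Cb$-proper: an unbounded blow-up can easily swallow complex affine lines in the uncalibrated tangential directions, in which case Theorem~\ref{thm:no_flats} is silent and $K_{\hat\Omega}$ may even degenerate.

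There are two secondary gaps. First, the locally uniform convergence $K_{\Omega_n}\to K_{\hat\Omega}$ under local Hausdorff convergence is available in the paper (Theorem~\ref{thm:Kob_cont}) only for $\Cb$-proper \emph{convex} domains; \cite{NPZ2011} supplies two-sided infinitesimal estimates for $\Cb$-convex sets but not the distance-convergence statement you invoke, so this would need its own proof. Second, passing $\delta$-hyperbolicity to the limit can be done via the four-point condition, but only after one knows $(\hat\Omega,K_{\hat\Omega})$ is a proper geodesic metric space, which again requires establishing $\Cb$-properness and completeness of the limit. The paper's argument sidesteps all of this limiting apparatus by working entirely inside the original bounded domain, using only Lemma~\ref{lem:convex_lower_bd_2} and Observation~\ref{obs:C1_domains} to build the quasi-geodesics it needs.
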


\begin{remark} \ \begin{enumerate}
\item In Subsection~\ref{subsec:bd_domain} we will construct examples of bounded $\Cb$-convex sets $\Omega$ where $(\Omega, K_{\Omega})$ is Gromov hyperbolic but $\partial \Omega$ contains a complex affine ball of codimension one, thus showing that some boundary regularity is required. 
\item By~\cite[Proposition 7]{NPZ2011}, if $\Omega$ is a bounded $\Cb$-convex open set with $C^1$ boundary then $\partial \Omega$ contains a non-trivial complex affine disk if and only if $\partial \Omega$ contains a non-trivial holomorphic disk. 
\end{enumerate}
\end{remark} 

In the second part of this paper we will demonstrate some new examples of unbounded convex domains for which the Kobayashi metric is Gromov hyperbolic. By taking projective images of these sets we will obtain bounded $\Cb$-convex sets. Our main motivation for these constructions is Proposition~\ref{prop:c_convex_disks} below, which shows that for any $d > 2$ there is a bounded $\Cb$-convex open set $\Omega \subset \Cb^d$ such that $(\Omega, K_{\Omega})$ is Gromov hyperbolic and $\partial \Omega$ contains a complex affine ball of dimension $d-1$.

Suppose $F:\Cb^d \rightarrow \Rb_{\geq 0}$ is a continuous function which is $C^\infty$ on $\Cb^d \setminus \{0\}$, non-negative, convex, and $F(0)=0$. Define 
\begin{align*}
\Omega_F := \{ (z_0, \dots, z_d) \in \Cb^{d+1} : \Imaginary(z_0) > F(z_1, \dots, z_d) \}
\end{align*}
and 
\begin{align*}
r_F(z_0,\dots, z_d) = F(z_1, \dots, z_d)-\Imaginary(z_0).
\end{align*}
We say that $\Omega_F$ has \emph{finite type away from 0} if for each non-trivial affine line $\ell:\Cb \rightarrow \Cb^{d+1}$ with $\ell(0) \in \partial \Omega_F \setminus \{0\}$ there exists multi-indices $\alpha,\beta$ such that 
\begin{align*}
\frac{\partial^{\abs{\alpha}+\abs{\beta}}}{\partial z^{\alpha} \partial \overline{z}^{\beta}} (r_F \circ \ell)(0) \neq 0.
\end{align*}
If $\delta_1, \dots, \delta_d > 0$, we say that $F$ is \emph{$(\delta_1,\dots, \delta_d)$-homogeneous} if 
\begin{align*}
\frac{1}{t} F( t^{\delta_1} z_1, \dots, t^{\delta_d} z_d) = F(z_1,\dots, z_d)
\end{align*}
for every $t >0$ and $(z_1,\dots, z_d) \in \Cb^d$. This implies that  $\Omega_F$ is invariant under the group 
\begin{align*}
G=\left\{\begin{pmatrix}
t & & & \\
& t^{\delta_1} & & \\
& & \ddots & \\
& & & t^{\delta_d} 
\end{pmatrix} : t > 0\right\}.
\end{align*}

Barth~\cite{B1980} proved that the Kobayashi metric is complete on a convex set $\Omega$ if and only if $\Omega$ does not contain any complex affine lines. Motivated by this fact and language from real projective geometry (see for instance~\cite{B2008}), we say a convex set $\Omega \subset \Cb^d$ is \emph{$\Cb$-proper} if $\Omega$ does not contain any complex affine lines. 

Finally with all this language we will prove the following:

\begin{theorem}\label{thm:main}
Suppose $F:\Cb^d \rightarrow \Rb_{\geq 0}$ is a continuous function which is $C^\infty$ on $\Cb^d \setminus \{0\}$, non-negative, convex, and $F(0)=0$. If $\Omega_F$ is $\Cb$-proper, has finite type away from 0, and $F$ is $(\delta_1,\dots, \delta_d)$-homogeneous, then $(\Omega_F, K_{\Omega_F})$ is Gromov hyperbolic. 
\end{theorem}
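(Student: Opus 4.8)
The plan is to argue by contradiction using the affine rescaling method together with the characterization of Gromov hyperbolicity for $\Cb$-proper convex domains developed in \cite{Z2014}. First I would record the basic structure: since $F$ is convex and $\Imaginary(z_0)$ is affine, the function $r_F = F - \Imaginary(z_0)$ is convex on $\Cb^{d+1}$, so $\Omega_F = \{r_F < 0\}$ is a convex open set; being $\Cb$-proper it contains no complex affine lines, hence by Barth's theorem \cite{B1980} the Kobayashi distance $K_{\Omega_F}$ is complete. Thus $(\Omega_F, K_{\Omega_F})$ is a proper geodesic metric space and the thin-triangles formulation of Gromov hyperbolicity applies. I would also fix the two relevant symmetry groups: the real translations $(z_0,z') \mapsto (z_0+s,z')$, $s \in \Rb$, and the scaling group $G$ coming from $(\delta_1,\dots,\delta_d)$-homogeneity, which fixes the two points $0$ and $\infty$ and acts on the vertical axis $\{(is,0): s>0\}$ by translation in $\log s$.

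Next I would set up the contradiction. Suppose $(\Omega_F,K_{\Omega_F})$ is not Gromov hyperbolic. Following the scheme used for convex domains, the failure of thin triangles produces a sequence of geodesic segments $\sigma_n$ whose endpoints converge to two distinct points $\xi^+ \neq \xi^-$ of the closure of $\Omega_F$ in $\Pb^{d+1}$ but which fail a visibility estimate: the $\sigma_n$ leave every compact subset of $\Omega_F$, escaping toward some boundary point $\eta$. Working directly, or after passing to the bounded $\Cb$-convex projective model, I would then rescale the domain near $\eta$ by affine maps $A_n$, extract a limit convex domain $\wh\Omega = \lim_n A_n \Omega_F$ in the local Hausdorff sense, and use the stability of the Kobayashi distance under such convergence (with the estimates of \cite{NPZ2011}) to conclude that $\wh\Omega$ must contain a complex affine line, equivalently that a complex affine disk lies in the boundary along the escaping direction. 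The argument then splits according to the location of $\eta$.

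If $\eta \in \partial\Omega_F \setminus \{0,\infty\}$, then $F$ is smooth near the relevant base point and the hypothesis that $\Omega_F$ has finite type away from $0$ applies: the D'Angelo finite type condition at $\eta$ forbids a complex affine disk in the boundary through $\eta$ and, more strongly, forces the rescaled limits to be linearly equivalent to finite type polynomial models, which are known to be Gromov hyperbolic and in particular visible, contradicting the escape of the $\sigma_n$. The substantive case, and the one I expect to be the main obstacle, is $\eta \in \{0,\infty\}$, precisely the two points where finite type is not assumed and where $\partial\Omega_F$ need not even be $C^1$ (indeed the examples of Subsection~\ref{subsec:bd_domain} show a complex affine disk may genuinely sit in $\partial\Omega_F$ through $0$). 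Here finite type is unavailable and I must instead exploit homogeneity: because $G$ commutes with the blow-up at $0$, rescaling along a $G$-orbit reproduces $\Omega_F$ itself rather than a new flat limit, so the geometry near $0$ is self-similar. The task is to show that this self-similar cone structure, combined with $\Cb$-properness (which rules out complex affine lines inside $\Omega_F$, even though the boundary may contain a disk), prevents geodesics from becoming trapped near $0$: concretely, I would use $G$ to normalize the scale at which $\sigma_n$ approaches $0$, show the normalized geodesics stay in a fixed compact set by completeness, and transfer this back to contradict the escape. The case $\eta = \infty$ follows by the symmetry $g_t \leftrightarrow g_{1/t}$ of $G$.

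Finally, having ruled out every location for $\eta$, the visibility estimate holds for all pairs $\xi^+ \neq \xi^-$, and I would invoke the convex-domain criterion of \cite{Z2014} to upgrade this uniform visibility, together with the explicit quasi-geodesics of Lemma~\ref{lem:quasi_geod} and geodesic shadowing, into the thin-triangles condition, concluding that $(\Omega_F,K_{\Omega_F})$ is Gromov hyperbolic. The delicate points to watch are the precise normalization of the affine rescalings in the unbounded setting (ensuring the limits remain convex and $\Cb$-proper) and making rigorous the claim that homogeneity forces self-similarity of the interior geometry at $0$ and $\infty$ despite the possible presence of a complex affine disk in the boundary there.
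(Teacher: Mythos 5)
Your skeleton --- argue by contradiction from a sequence of fat geodesic triangles, rescale by affine maps, pass to limits of geodesics in the local Hausdorff topology using the machinery of \cite{Z2014} and \cite{NPZ2011}, use finite type away from $0$ at generic boundary points, and use the homogeneity group $G$ to deal with $0$ and $\infty$ --- is the right one and matches the paper's strategy. But there are two genuine gaps in the execution. The more serious one is your treatment of the exceptional points $\{0,\infty\}$. Your plan there is to blow up at the escape point $\eta\in\{0,\infty\}$ and ``show the normalized geodesics stay in a fixed compact set by completeness''; completeness of $K_{\Omega_F}$ gives no such compactness (geodesic rays do escape to the boundary), and self-similarity alone does not obviously prevent a geodesic from being ``trapped'' near $0$. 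The paper sidesteps ever having to rescale at $0$ or $\infty$ by a different normalization at the very start: given a fat triangle with a point $u_n$ on one side at distance $>n$ from the other two sides, it applies an element of $G$ so that $\norm{u_n}=1$. Then either $u_n\to u_\infty\in\Omega_F$ (no blow-up needed), or $u_n$ converges to a boundary point of norm $1$, which is automatically a point of finite line type, so the rescaling theorem (Theorem~\ref{thm:gaussier}) applies there. The points $0$ and $\infty$ then only have to be confronted in proving that every geodesic line $\gamma:\Rb\to\Omega_F$ has distinct endpoints in $\overline{\Cb^{d+1}}$ (Proposition~\ref{prop:well_behaved}); there, the Gromov product criterion (Proposition~\ref{prop:gromov_prod_finite}, which needs $C^2$ boundary and hence only rules out points of $\partial\Omega_F\setminus\{0\}$) reduces the bad case to $\xi\in\{0,\infty\}$, and homogeneity is then used in a specific way: applying the dilation of factor $n$ to $\gamma$ produces geodesics $\gamma_n$ with both ends still at $0$ but with $\gamma_n(0)\to\infty$, and extracting two limit geodesics at parameter shifts $\alpha_k\to-\infty$, $\beta_k\to+\infty$ yields $K_{\Omega_F}(\wh{\gamma}_1(0),\wh{\gamma}_2(0))=\lim_k(\beta_k-\alpha_k)=\infty$, a contradiction. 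Nothing in your sketch supplies this step.

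The second gap is the proposed mechanism of contradiction. You assert that the failure of thin triangles forces the rescaled limit domain $\wh{\Omega}$ to contain a complex affine line (equivalently a disk in the boundary along the escaping direction); this is not justified and is not how the contradiction is actually obtained. In the paper, the limits $\sigma$ and $\wh{\sigma}$ of the sides $\sigma_{x_ny_n}$ and $\sigma_{x_nz_n}$ are honest geodesics of the ($\Cb$-proper) limit domain; the point is that since geodesic lines in the limit have \emph{distinct} endpoints (Proposition 12.2 of \cite{Z2014}, resp.\ Proposition~\ref{prop:well_behaved}), the point $z_\infty$ must differ from $x_\infty$ or $y_\infty$, so a side of the triangle not containing $u_n$ also converges to a geodesic, whence $K(u_n,\sigma_{x_nz_n})$ stays bounded --- contradicting $K(u_n,\sigma_{x_nz_n})>n$. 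Relatedly, your final step of ``upgrading uniform visibility to thin triangles via a convex-domain criterion of \cite{Z2014}'' is not a criterion available in the form you use it; the paper derives the contradiction directly from the fat-triangle data. I would rework the argument around the normalization $\norm{u_n}=1$ and state and prove the endpoint-separation property of geodesic lines as a separate proposition.
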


We should emphasize that for a general domain it is difficult to determine what geodesics in the Kobayashi metric look like. Thus establishing that the Kobayashi metric is Gromov hyperbolic is a non-trivial task and prior to the work in~\cite{Z2014} was (to the best of our knowledge) only established for bounded strongly pseudo-convex sets~\cite{BB2000} and for certain complex ellipses~\cite{GS2013}. The proof of Theorem~\ref{thm:main} relies heavily on the techniques used in~\cite{Z2014}.

\subsection{Homogeneous polynomial domains} Theorem~\ref{thm:main} applies to convex homogeneous polynomial domains. We say a convex polynomial $P:\Cb^d \rightarrow \Rb_{\geq 0}$ is \emph{non-degenerate} if $P^{-1}(0)$ does not contain any complex affine lines. Theorem~\ref{thm:main} then implies the following:

\begin{theorem}\label{thm:poly}
Suppose $P: \Cb^d \rightarrow \Rb$ is a non-negative, non-degenerate, convex, homogeneous polynomial with $P(0)=0$. If 
\begin{align*}
\Omega = \{ (z_0, z_1, \dots, z_d) \in \Cb^{d+1} : \Imaginary(z_0) > P(z_1, \dots, z_d) \},
\end{align*}
then $(\Omega, K_{\Omega})$ is Gromov hyperbolic. 
\end{theorem}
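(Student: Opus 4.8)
The plan is to deduce Theorem~\ref{thm:poly} from Theorem~\ref{thm:main} by setting $F = P$ and checking that all of the hypotheses of Theorem~\ref{thm:main} hold; the domain in the statement is then precisely $\Omega_P$. That $P$ is continuous, $C^\infty$ away from the origin, non-negative, convex, and satisfies $P(0)=0$ is immediate, since $P$ is a non-negative convex polynomial with $P(0)=0$. For the homogeneity hypothesis, if $P$ has degree $m$ then homogeneity gives $P(t^{1/m}z_1,\dots,t^{1/m}z_d) = t\,P(z_1,\dots,z_d)$ for every $t>0$, so $P$ is $(1/m,\dots,1/m)$-homogeneous (here $m\geq 1$, so the exponents $1/m$ are positive). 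It therefore remains to verify that $\Omega_P$ is $\Cb$-proper and has finite type away from $0$.

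The key to both conditions is the following elementary fact, which I would prove from convexity and homogeneity: if $P$ is bounded above on some complex affine line $L = a + \Cb v$ with $v\neq 0$, then $P$ vanishes identically on the complex line $\Cb v$ through the origin. Indeed, fixing a phase $e^{i\theta}$ and using homogeneity to write $P(a + s e^{i\theta}v) = s^m P(e^{i\theta}v + a/s)$ for $s>0$, boundedness as $s\to\infty$ forces $P(e^{i\theta}v) = 0$ for every $\theta$; homogeneity then gives $P\equiv 0$ on $\Cb v$. Since non-degeneracy of $P$ means precisely that $P^{-1}(0)$ contains no complex affine line, we conclude that $P$ is \emph{not} bounded above on any complex affine line in $\Cb^d$.

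With this fact in hand, $\Cb$-properness follows by a direction analysis. Writing $z = (z_0, z')$, suppose $\ell(\zeta) = p + \zeta w$ is a complex affine line contained in $\Omega_P$, so that $\Imaginary(p_0 + \zeta w_0) > P(p' + \zeta w')$ for all $\zeta$. If $w_0 \neq 0$ the left-hand side would be unbounded below while the right-hand side is $\geq 0$, a contradiction; hence $w_0 = 0$ and $w'\neq 0$, and then $P$ is bounded above by $\Imaginary(p_0)$ on the line $p' + \Cb w'$, contradicting the previous paragraph. Thus $\Omega_P$ contains no complex affine line and is $\Cb$-proper. The finite-type condition reduces to the same fact: since $P$ is a polynomial, $r_P \circ \ell$ is a polynomial in $\zeta,\overline\zeta$ whose value at $\zeta=0$ is $0$ whenever $\ell(0)\in\partial\Omega_P$, so its Taylor expansion at $0$ is exact and some derivative $\frac{\partial^{\abs{\alpha}+\abs{\beta}}}{\partial z^\alpha\partial\overline z^\beta}(r_P\circ\ell)(0)$ is non-zero unless $r_P\circ\ell\equiv 0$, i.e. unless $\ell(\Cb)\subset\partial\Omega_P$. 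Repeating the direction analysis with $>$ replaced by $=$ shows that no complex affine line lies in $\partial\Omega_P$ (a boundary line again forces $w_0 = 0$ and then $P$ constant, hence bounded above, on $p' + \Cb w'$). Therefore $r_P\circ\ell$ is non-constant for every line with $\ell(0)\in\partial\Omega_P\setminus\{0\}$, so $\Omega_P$ has finite type away from $0$, and Theorem~\ref{thm:main} applies.

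The main obstacle is precisely the lemma linking the single algebraic hypothesis of non-degeneracy to the two geometric conditions of Theorem~\ref{thm:main}: one must show that the assumption that $P^{-1}(0)$ contains no complex affine line simultaneously rules out complex affine lines inside $\Omega_P$ (giving $\Cb$-properness) and inside $\partial\Omega_P$ (giving finite type away from $0$). The convexity-plus-homogeneity argument is what converts ``bounded above'' (respectively ``constant'') on a line into ``identically zero on the parallel line through the origin,'' and thereby ties both conditions back to non-degeneracy; everything else is bookkeeping.
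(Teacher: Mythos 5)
Your proposal is correct and follows the same route as the paper, which simply derives Theorem~\ref{thm:poly} as a special case of Theorem~\ref{thm:main} for $F=P$ with $\delta_1=\dots=\delta_d=1/m$. The paper leaves the verification of $\Cb$-properness and finite type away from $0$ to the reader, and your reduction of both conditions to non-degeneracy via the ``bounded above on a line implies vanishing on the parallel line through $0$'' lemma is a valid way to fill in exactly that gap.
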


\subsection{Cones}

Theorem~\ref{thm:main} also applies to many convex cones. Suppose $F:\Cb^d \rightarrow \Rb_{\geq 0}$ is a continuous function which is $C^\infty$ on $\Cb^d \setminus \{0\}$, non-negative, convex, and $F(0)=0$. If
\begin{align*}
\frac{1}{t}F(t z_1, \dots, tz_d) = F(z_1, \dots, z_d) 
\end{align*}
for all $z_1, \dots, z_d \in \Cb$ and $t > 0$, then $\Omega_F$ is a convex cone. And if $\Omega_F$ is a $\Cb$-proper and has finite type away from 0, then $\Omega_F$ satisfies the hypothesis of Theorem~\ref{thm:main} .

To give a concrete example of this construction let $\norm{z}_p$ be the $L_p$-norm on $\Cb^d$ then:

\begin{theorem}\label{thm:cones} If $p > 1$ and
\begin{align*}
\Cc_p = \{ (z_0, z) \in \Cb^{d+1} : \Imaginary(z_0) > \norm{z}_p  \},
\end{align*}
then $(\Cc_p, K_{\Cc_p})$ is Gromov hyperbolic.
\end{theorem}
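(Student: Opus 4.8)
The plan is to derive Theorem~\ref{thm:cones} from Theorem~\ref{thm:main} by setting $F(z) = \norm{z}_p$ and verifying the hypotheses. Most are immediate: $\norm{\cdot}_p$ is continuous, non-negative, satisfies $F(0) = 0$, is convex by the triangle inequality, and is positively homogeneous of degree one, $\norm{tz}_p = t\norm{z}_p$ for $t > 0$. Thus $F$ is $(1, \dots, 1)$-homogeneous, the invariance group $G$ is the full diagonal $\Rb_{>0}$-scaling, and $\Cc_p = \Omega_F$ is a convex cone. It remains to check that $\Omega_F$ is $\Cb$-proper and has finite type away from $0$; the regularity of $F$ will be the one genuine difficulty, discussed at the end.

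For $\Cb$-properness I would show directly that $\Cc_p$ contains no complex affine line. If $\zeta \mapsto (a_0, a') + \zeta(b_0, b')$, with $(b_0, b') \neq 0$ and $a', b' \in \Cb^d$, were such a line, then containment in $\Cc_p$ would force $\Imaginary(a_0) + \Imaginary(\zeta b_0) > \norm{a' + \zeta b'}_p \geq 0$ for every $\zeta \in \Cb$. When $b_0 \neq 0$ the left-hand side takes arbitrarily negative values, which is impossible; hence $b_0 = 0$. The left-hand side is then the constant $\Imaginary(a_0)$, while the right-hand side tends to $+\infty$ as $\abs{\zeta} \to \infty$ whenever $b' \neq 0$; hence $b' = 0$ as well, contradicting $(b_0, b') \neq 0$. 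So $\Cc_p$ is $\Cb$-proper, and by Barth's theorem the Kobayashi metric is complete.

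The finite type condition is the heart of the argument. Fix a nonconstant complex affine line $\ell(\zeta) = (z_0^*, z^*) + \zeta(v_0, v')$ with $(z_0^*, z^*) \in \partial\Omega_F \setminus \{0\}$, so that $\Imaginary(z_0^*) = \norm{z^*}_p$. If $v' = 0$ then $(r_F \circ \ell)(\zeta) = -\Imaginary(\zeta v_0)$ is a nonzero $\Rb$-linear function, so $\ell$ is of type one. If $v' \neq 0$ and every coordinate of $z^*$ is nonzero, then $\abs{z_i^* + \zeta v_i'}^p = ((\Real(z_i^* + \zeta v_i'))^2 + (\Imaginary(z_i^* + \zeta v_i'))^2)^{p/2}$ is real-analytic near $\zeta = 0$, and so is $\zeta \mapsto \norm{z^* + \zeta v'}_p$. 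Were $r_F \circ \ell$ flat at $0$, real-analyticity would force it to vanish on a neighborhood, hence $\zeta \mapsto \norm{z^* + \zeta v'}_p$ to coincide with an affine function near $0$ and, by analytic continuation across the connected set $\{\zeta : z_i^* + \zeta v_i' \neq 0 \ \forall i\}$, on all of $\Cb$ minus a finite set. This is impossible, since the norm of a nonconstant affine map is coercive whereas a real-affine function is not. Thus $\ell$ has finite type at every boundary point whose $z$-coordinates are all nonzero.

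The main obstacle is exactly the remaining directions, where $\norm{\cdot}_p$ is not smooth. Along the coordinate subspaces $\{z_i = 0\}$ the function $\abs{z_i}^p$ is, for $p$ not an even integer, only finitely differentiable, so $F$ fails to be $C^\infty$ on $\Cb^d \setminus \{0\}$ (for even $p$ it is, since $\norm{z}_p^p$ is then a positive polynomial away from $0$) and Theorem~\ref{thm:main} does not apply verbatim. I expect to resolve this in one of two ways. The direct route isolates the non-analytic part: writing $\norm{z^* + \zeta v'}_p^p = A(\zeta) + B\abs{\zeta}^p$ with $A$ real-analytic, $A(0) = \norm{z^*}_p^p$, and $B = \sum_{z_i^* = 0} \abs{v_i'}^p$, the singular term $B\abs{\zeta}^p$ (present precisely when $\ell$ moves into some vanishing coordinate) obstructs flatness, while $B = 0$ returns us to the analytic case. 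The cleaner route is an approximation: replace $\norm{\cdot}_p$ by smooth, convex, degree-one homogeneous $F_\varepsilon \downarrow \norm{\cdot}_p$, apply Theorem~\ref{thm:main} to each $\Omega_{F_\varepsilon}$, and transfer Gromov hyperbolicity to $\Omega_F$ via the uniform comparability of the Kobayashi distances together with the stability of Gromov hyperbolicity under limits with uniform constants. In either approach the strict convexity of $\norm{\cdot}_p$ for $p > 1$ is what ultimately prevents the boundary from carrying a complex disk, and the scaling symmetry $G$ is what makes the local finite-type information global.
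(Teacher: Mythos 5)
Your route is the paper's route: in the text, Theorem~\ref{thm:cones} is presented with no separate proof, as an immediate instance of the cone construction feeding into Theorem~\ref{thm:main} (a $1$-homogeneous $F$ gives a convex cone $\Omega_F$, and one checks $\Cb$-properness and finite type away from $0$). Your verifications of convexity, $(1,\dots,1)$-homogeneity, and $\Cb$-properness are correct and are exactly what the paper implicitly uses; your finite-type argument at boundary points all of whose $z$-coordinates are nonzero is also in the right spirit.

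The place where your write-up goes beyond the paper is also where it stops short of a proof. You are right that for $d\geq 2$ and $p$ not an even integer, $\norm{\cdot}_p$ fails to be $C^\infty$ on $\Cb^d\setminus\{0\}$ along the coordinate strata $\{z_i=0\}$, so the hypotheses of Theorem~\ref{thm:main} are not literally satisfied; the paper does not address this. But neither of your two proposed repairs is actually carried out, and each has a concrete obstacle. The ``direct'' route, isolating the singular term $B\abs{\zeta}^p$, shows that $r_F\circ\ell$ is not flat, but the finite-type condition and the machinery behind Theorem~\ref{thm:main} (Proposition~\ref{prop:finite_type} and Theorem~\ref{thm:gaussier}, i.e.\ Propositions 9.3 and 10.1 of \cite{Z2014}) presuppose smoothness of the defining function, so ruling out flatness of a non-smooth $r_F\circ\ell$ does not by itself bring $\Cc_p$ within the scope of the theorem. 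The approximation route is worse off: to transfer Gromov hyperbolicity from $\Omega_{F_\varepsilon}$ to $\Omega_F$ you would need the hyperbolicity constants $\delta_\varepsilon$ to stay bounded, which Theorem~\ref{thm:main} gives you no control over, and ``uniform comparability of the Kobayashi distances'' does not yield a quasi-isometry with constants tending to $(1,0)$ in a way that preserves $\delta$-hyperbolicity in the limit. As it stands, your argument (like the paper's) is complete only for $p$ an even integer or $d=1$; for the remaining cases you have correctly located the gap but not closed it.
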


\subsection{Bounded domains}\label{subsec:bd_domain}

We can also use Theorem~\ref{thm:main} to constructed examples of bounded sets where the Kobayashi metric is Gromov hyperbolic. This can be accomplished by taking the image of one of the domains above under an appropriate projective transformation. Unfortunately convexity is not preserved under such maps and instead the resulting domains are only $\Cb$-convex. 

For instance, consider the cone $\Cc_2$ from Theorem~\ref{thm:cones} and the transformation $f : \Cb^{d+1} \setminus \{ z_0=-i\} \rightarrow \Cb^{d+1}$ given by 
\begin{align*}
f(z_0,\dots,  z_d) = \left( \frac{1}{z_0+i},\frac{z_1}{z_0+i},\dots, \frac{z_d}{z_0+i} \right).
\end{align*}
Notice that $f$ is a restriction of a projective automorphism $\Pb(\Cb^{d+2}) \rightarrow \Pb(\Cb^{d+2})$. In particular $\Omega:=f(\Cc_2)$ is bi-holomorphic to $\Cc_2$ and hence $(\Omega, K_{\Omega})$ is Gromov hyperbolic. Since $\Cc_2$ is convex and $f$ is a projective automorphism the intersection of $\Omega$ with any complex line is either empty or simply connected. Thus $\Omega$ is a  $\Cb$-convex set. Moreover $\Omega$ is bounded and the boundary of $\Omega$ contains $\{0\} \times \{ (z_1, \dots, z_d) : \sum \abs{z_i}^2 < 1\}$. Summarizing the above example:

\begin{proposition}\label{prop:c_convex_disks}
For any $d \geq 2$, there is a bounded $\Cb$-convex open set $\Omega \subset \Cb^d$ such that $(\Omega, K_{\Omega})$ is Gromov hyperbolic and $\partial \Omega$ contains a complex affine ball of dimension $d-1$.
\end{proposition}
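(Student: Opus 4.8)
The plan is to exhibit the bounded $\Cb$-convex set directly as the image $\Omega := f(\Cc_2)$ of the cone $\Cc_2$ from Theorem~\ref{thm:cones} under the explicit projective map $f$ written in the excerpt, and then verify the four required properties: Gromov hyperbolicity, $\Cb$-convexity, boundedness, and the presence of a complex affine ball of dimension $d-1$ in $\partial\Omega$. The work is essentially bookkeeping once one tracks what $f$ does to the defining inequality of $\Cc_2$.

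First I would establish Gromov hyperbolicity. By Theorem~\ref{thm:cones} (the case $p=2$), $(\Cc_2, K_{\Cc_2})$ is Gromov hyperbolic. Since $f$ is the restriction of a projective automorphism of $\Pb(\Cb^{d+2})$ to the affine chart where $z_0 \neq -i$, and $\Cc_2$ lies in the region $\Imaginary(z_0) > \norm{z}_2 \geq 0$ (so in particular $z_0 \neq -i$ on $\Cc_2$), the map $f$ restricts to a biholomorphism from $\Cc_2$ onto $\Omega$. The Kobayashi distance is a biholomorphic invariant, so $(\Omega, K_\Omega)$ is isometric to $(\Cc_2, K_{\Cc_2})$ and hence Gromov hyperbolic.

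Next I would verify that $\Omega$ is a bounded $\Cb$-convex open set. For $\Cb$-convexity: $\Cc_2$ is convex, hence its intersection with any complex line is empty or contractible (in fact convex, so simply connected). A projective automorphism carries complex lines to complex lines, and $f$ is a homeomorphism onto its image that carries $\Cc_2 \cap L$ to $\Omega \cap f(L)$ for each complex line $L$; since every complex line meeting $\Omega$ arises as $f(L)$ for some complex line $L$ meeting $\Cc_2$, the intersection $\Omega \cap (\text{line})$ is again simply connected, giving $\Cb$-convexity. For boundedness, the cleanest route is to compute the defining inequality for $\Omega$ by substituting $w = f(z)$ and solving for the preimage: writing $w_0 = 1/(z_0+i)$ and $w_j = z_j/(z_0+i)$, one recovers $z_0 = (1 - i w_0)/w_0$ and $z_j = w_j/w_0$, and then rewrites $\Imaginary(z_0) > \norm{(z_1,\dots,z_d)}_2$ as an inequality in the $w$-coordinates; clearing denominators turns this into a sphere-type estimate confining $w$ to a bounded region. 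I would then read off that $\partial\Omega$ contains $\{0\}\times\{(z_1,\dots,z_d): \sum_j \abs{z_j}^2 < 1\}$, which is a complex affine ball of dimension $d-1$, by checking that this set is the image under $f$ of the boundary piece of $\Cc_2$ lying at infinity (the points where $z_0 \to \infty$ along $\partial\Cc_2$).

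The main obstacle is the boundedness computation together with the precise identification of the affine ball in $\partial\Omega$: one must carry out the coordinate substitution carefully, confirm that the limiting boundary behavior of $\Cc_2$ as $\Imaginary(z_0)\to\infty$ maps to the claimed flat piece, and check that $f$ really sends all of the (unbounded) cone into a bounded set rather than merely a proper subset. None of these steps is conceptually deep, but the algebra of inverting $f$ and tracking the boundary correspondence is where care is required; everything else follows from invariance of Gromov hyperbolicity and of $\Cb$-convexity under projective maps.
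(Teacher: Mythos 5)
Your proposal is correct and follows essentially the same route as the paper: the paper's own argument is exactly to take $\Omega = f(\Cc_2)$ for the stated projective map $f$, invoke Theorem~\ref{thm:cones} plus biholomorphic invariance of the Kobayashi distance for Gromov hyperbolicity, use that projective automorphisms carry complex lines to complex lines to transport $\Cb$-convexity from the convex cone, and identify $\{0\}\times\{(z_1,\dots,z_d):\sum\abs{z_i}^2<1\}$ as the affine ball in $\partial\Omega$. The only difference is that you propose to carry out the boundedness and boundary-correspondence computations explicitly, which the paper leaves as a routine verification.
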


\subsection{Corners and the Hilbert metric}
Every proper open convex set $\Omega \subset \Rb^d$ has a projectively invariant metric $H_{\Omega}$ called the \emph{Hilbert metric}. This metric is usually defined using cross ratios, but it has an equivalent formulation which makes it a real projective analogue of the Kobayashi metric (see for instance~\cite{K1977} or~\cite{L1986} or~\cite[Section 3.4]{G2015}). Thus results about the Hilbert metric can serve as guide to understanding the Kobayashi metric. 

The convex domains for which the Hilbert metric is Gromov hyperbolic are very well understood. Karlsson and Noskov showed:

\begin{theorem}\cite{KN2002}\label{thm:hilbert} 
If $\Omega \subset \Rb^d$ is a convex set and $(\Omega, H_{\Omega})$ is Gromov hyperbolic, then $\partial \Omega$ is a $C^1$ hypersurface and $\Omega$ is strictly convex (that is $\partial \Omega$ does not contain any line segments). 
\end{theorem}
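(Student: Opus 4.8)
The plan is to prove both conclusions by contraposition, exploiting two structural features of the Hilbert metric: its geodesics are exactly projective straight-line segments, and the distance $H_\Omega(x,y)$ is given by an explicit cross ratio of $x,y$ with the two boundary points cut out on the line through them. Throughout I would use Gromov's $\delta$-thin-triangles formulation of hyperbolicity, together with domain monotonicity ($\Omega_1 \subseteq \Omega_2 \Rightarrow H_{\Omega_2} \le H_{\Omega_1}$), so that to rule out hyperbolicity it suffices to produce, for every $\delta$, a geodesic triangle that fails to be $\delta$-thin.

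First I would treat strict convexity. Suppose toward a contradiction that $\partial \Omega$ contains a nondegenerate segment with endpoints $p,q$. Fix an interior base point $o$ and choose sequences $x_n \to p$ and $y_n \to q$ running into the two corners of the closed triangle $\mathrm{conv}(o,p,q) \subset \overline\Omega$. Since the sides of a Hilbert geodesic triangle are straight segments, the third side $[x_n,y_n]$ runs nearly parallel to, and converges to, the boundary segment $[p,q]$. The heart of the argument is the cross-ratio estimate showing that the midpoint $m_n$ of $[x_n,y_n]$ satisfies
\[
H_\Omega\big(m_n,\ [o,x_n]\cup[o,y_n]\big) \longrightarrow \infty .
\]
The mechanism is flatness: because the boundary near the middle of $[p,q]$ is a genuine segment rather than a strictly convex curve, the two boundary points cut out by a short transversal through $m_n$ recede at a controlled rate, forcing the Hilbert distance from $m_n$ to either of the other two sides to blow up. This exhibits triangles that are not $\delta$-thin for any fixed $\delta$, contradicting hyperbolicity; hence $\Omega$ must be strictly convex.

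For the $C^1$ conclusion I would again argue by contraposition, assuming $\partial \Omega$ fails to be differentiable at some $p$, so that there are two distinct supporting hyperplanes $H_1,H_2$ at $p$. A genuine corner plays a role dual to that of a boundary segment: in the two-dimensional slice through $p$ spanned by the normals of $H_1,H_2$, the supporting cone at $p$ is a wedge rather than a half-plane, and sequences approaching $p$ transversally to the two faces of the wedge again generate geodesic triangles whose third side lingers a Hilbert distance tending to infinity from the union of the other two sides. Quantitatively this reduces, as before, to a cross-ratio computation, now using that the two distinct supporting hyperplanes keep the relevant exit points from receding, so that the same divergence persists. This yields triangles that are not uniformly thin and contradicts hyperbolicity, forcing $\partial \Omega$ to be a $C^1$ hypersurface.

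The main obstacle in both parts is identical: converting the qualitative statement ``$\partial\Omega$ is flat near a segment (or pinched at a corner)'' into the quantitative divergence $H_\Omega(m_n,\cdot)\to\infty$. I would handle this by sandwiching $\Omega$, via domain monotonicity, between an inner simplex and an outer half-space (or wedge): the simplex supplies the flat, normed-plane model geometry that is manifestly non-hyperbolic and governs the upper bounds, while the outer body certifies the complementary lower bounds on the relevant cross ratios. Establishing uniform control of these cross ratios as $n\to\infty$, rather than the triangle combinatorics, is where the real work lies, and I expect the corner case to be the more delicate of the two, since there the flatness is hidden in the dual wedge rather than visible as a segment in $\partial\Omega$.
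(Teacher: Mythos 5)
The paper does not actually prove this statement: Theorem~\ref{thm:hilbert} is imported from Karlsson--Noskov \cite{KN2002}, so the comparison must be with their argument. Your outline follows the same broad strategy they use --- for every $\delta$, exhibit a straight-sided geodesic triangle that is not $\delta$-thin, treating the boundary-segment case and the corner case by parallel cross-ratio estimates, and reducing $C^1$-ness to the non-existence of two distinct supporting hyperplanes (legitimate, cf.\ Remark~\ref{rmk:hilbert}). But as written it is a plan rather than a proof, and two of its load-bearing claims need repair.

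First, Hilbert geodesics are \emph{not} ``exactly'' the projective line segments: straight segments are always geodesics, but the converse fails (in a simplex, whose Hilbert geometry is a polygonally-normed plane, generic pairs of points are joined by infinitely many geodesics). This error is harmless here, since to contradict $\delta$-hyperbolicity it suffices to produce fat triangles whose sides happen to be straight segments, but you should not rely on the false converse. Second, and more seriously, the entire content of the theorem is the divergence $H_\Omega\bigl(m_n,[o,x_n]\cup[o,y_n]\bigr)\to\infty$, which you assert and explicitly defer (``where the real work lies''), and the one mechanism you offer for the required lower bounds fails as stated: an outer half-space, or any comparison domain containing a complete affine line, carries only a degenerate pseudo-distance --- the chord through two points parallel to the flat face never meets the boundary twice, the cross ratio collapses, and $H_{\Omega'}$ vanishes in exactly the directions along the segment $[p,q]$ that you must control. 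The standard repairs are: (i) observe that $H_\Omega(x,y)$ depends only on the intersection of $\Omega$ with the line through $x$ and $y$, so the $2$-plane through $o,p,q$ is \emph{isometrically} embedded and the whole problem reduces to planar convex domains, where the cross-ratio computations are finite; and (ii) take the outer comparison body to be a bounded triangle with one edge along the supporting line of $[p,q]$ (for the corner case the tangent cone at the corner is a proper planar sector, projectively a triangle, so there the outer comparison does yield genuine lower bounds). With those two ingredients the sandwich between normed-plane models closes the argument; without them your proposal does not yet constitute a proof.
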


\begin{remark}\label{rmk:hilbert} Notice that $\partial\Omega$ being $C^1$ is a equivalent to there being a unique supporting real hyperplane through each boundary point and $\Omega$ being strictly convex is equivalent to each supporting real hyperplane intersecting $\partial \Omega$ at exactly one point. 
\end{remark}

Improving on Theorem~\ref{thm:hilbert}, Benoist~\cite{B2003} characterized the convex domains for which the Hilbert metric is Gromov hyperbolic in terms of the first derivatives of local defining functions for $\partial \Omega$. 

It is natural to ask if some analogue of Theorem~\ref{thm:hilbert} holds for the Kobayashi metric. Since every bounded convex set in $\Cb$ has Gromov hyperbolic Kobayashi metric, in general $\partial \Omega$ need not be $C^1$ and $\Omega$ need not be strictly convex.  However the conclusion of Theorem~\ref{thm:no_flats} can be seen as a complex analytic version of strict convexity. 

Based on Theorem~\ref{thm:hilbert}, Remark~\ref{rmk:hilbert}, and Theorem~\ref{thm:no_flats} it is natural to ask if the number of complex supporting hyperplanes through a point in the boundary is restricted by the Gromov hyperbolicity of the Kobayashi metric. However if $f: \Cb^d \rightarrow \Cb$ is a linear map and $\abs{f(z)} \leq \abs{z}$ for all $z \in \Cb^d$ then the complex hyperplane
\begin{align*}
H_{f} = \{ (f(z), z) : z \in \Cb^{d}\}
\end{align*}
is tangent to the cone $\Cc_2$ (in Theorem~\ref{thm:cones}) at $0$. In particular we see that the set of supporting complex hyperplanes of $\Cc_2$ through $0$ contains a complex ball of dimension $d$.

\subsection*{Acknowledgments} 

I would like to thank the referee for a number of comments and corrections which greatly improved the present work. 

\section{Preliminaries}\label{sec:prelim}

\subsection{Basic notation} We now fix some very basic notations.

\begin{itemize}
\item Let $\Delta: = \{ z \in \Cb : \abs{z} < 1\}$.
\item For $z \in \Cb^d$ let $\norm{z}$ denote the standard Euclidean norm of $z$. 
\item For $z_0 \in \Cb^d$ and $R>0$ let $B_R(z_0) := \{ z \in \Cb^d : \norm{z-z_0} < R\}$.
\item Given a open set $\Omega \subset \Cb^d$ and $p \in \Omega$ let 
\begin{align*}
\delta_{\Omega}(p):= \inf \left\{ \norm{q-p} : q \in  \partial \Omega \right\}.
\end{align*}
\item Given a open set $\Omega \subset \Cb^d$, $p \in \Omega$, and $v \in \Cb^d$ let 
\begin{align*}
\delta_{\Omega}(p;v):= \inf \left\{ \norm{q-p} : q \in (p+\Cb \cdot v)  \cap \partial \Omega \right\}.
\end{align*}
\item Given two open sets $\Omega_1 \subset \Cb^{d_1}$ and $\Omega_2 \subset \Cb^{d_2}$ let $\Hol(\Omega_1, \Omega_2)$ be the space of holomorphic maps from $\Omega_1$ to $\Omega_2$. 
\item Given two open sets $\Oc_1 \subset \Rb^{d_1}$, $\Oc_2 \subset \Rb^{d_2}$, a $C^1$ map $F: \Oc_1 \rightarrow \Oc_2$, and a point $x \in \Oc_1$ define the derivative $d(F)_x : \Rb^{d_1} \rightarrow \Rb^{d_2}$ of $F$ at $x$ by 
\begin{align*}
d(F)_x(v) := \left.\frac{d}{dt}\right|_{t=0} F(x+tv).
\end{align*}
\end{itemize}

\subsection{The Kobayashi metric and distance} Given a domain $\Omega \subset \Cb^d$ the \emph{(infinitesimal) Kobayashi metric} is the pseudo-Finsler metric
\begin{align*}
k_{\Omega}(x;v) = \inf \left\{ \abs{\xi} : f \in \Hol(\Delta, \Omega), \ f(0) = x, \ d(f)_0(\xi) = v \right\}.
\end{align*}
By a result of Royden~\cite[Proposition 3]{R1971} the Kobayashi metric is an upper semicontinuous function on $\Omega \times \Cb^d$. In particular if $\sigma:[a,b] \rightarrow \Omega$ is an absolutely continuous curve (as a map $[a,b] \rightarrow \Cb^d$), then the function 
\begin{align*}
t \in [a,b] \rightarrow k_\Omega(\sigma(t); \sigma^\prime(t))
\end{align*}
is integrable and we can define the \emph{length of $\sigma$} to  be
\begin{align*}
\ell_\Omega(\sigma)= \int_a^b k_\Omega(\sigma(t); \sigma^\prime(t)) dt.
\end{align*}
One can then define the \emph{Kobayashi pseudo-distance} to be
\begin{multline*}
 K_\Omega(x,y) = \inf \left\{\ell_\Omega(\sigma) : \sigma\colon[a,b]
 \rightarrow \Omega \text{ is absolutely continuous}, \right. \\
 \left. \text{ with } \sigma(a)=x, \text{ and } \sigma(b)=y\right\}.
\end{multline*}
This definition is equivalent to the standard definition of $K_\Omega$ via analytic chains, see~\cite[Theorem 3.1]{V1989}.

A nice introduction to the Kobayashi metric and its basic properties can be found in~\cite{K2005}.
 
\subsection{Gromov hyperbolic metric spaces}\label{sec:prelim_gromov}

Suppose $(X,d)$ is a metric space. If $I \subset \Rb$ is an interval, a curve $\sigma: I \rightarrow X$ is a \emph{geodesic} if $d(\sigma(t_1),\sigma(t_2)) = \abs{t_1-t_2}$ for all $t_1, t_2 \in I$.  A \emph{geodesic triangle} in a metric space is a choice of three points in $X$ and geodesic segments  connecting these points. A geodesic triangle is said to be \emph{$\delta$-thin} if any point on any of the sides of the triangle is within distance $\delta$ of the other two sides. 

\begin{definition}
A proper geodesic metric space $(X,d)$ is called \emph{$\delta$-hyperbolic} if every geodesic triangle is $\delta$-thin. If $(X,d)$ is $\delta$-hyperbolic for some $\delta\geq0$ then $(X,d)$ is called \emph{Gromov hyperbolic}.
\end{definition}

The book by Bridson and Haefliger~\cite{BH1999} is one of the standard references for Gromov hyperbolic metric spaces. 

\section{A lower bound for the Kobayashi distance}

In this section we use an estimate for the  infinitesimal Kobayashi metric established by Nikolov, Pflug, and Zwonek ~\cite{NPZ2011} to obtain an estimate on the Kobayashi distance. Using this estimate on the distance we will demonstrate a basic property of the asymptotic geometry of the Kobayashi distance on $\Cb$-convex sets. 

By considering affine maps of the unit disk into a domain, one obtains the following upper bound on the Kobayashi metric:

\begin{observation}
 Suppose $\Omega \subset \Cb^d$ is an open set. Then 
\begin{align*}
k_{\Omega}(p;v) \leq \frac{\norm{v}}{\delta_{\Omega}(p;v)}
\end{align*}
for $p \in \Omega$ and $v \in \Cb^d$ non-zero.
\end{observation}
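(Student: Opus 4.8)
The plan is to prove this upper bound by exhibiting a single explicit competitor in the infimum defining $k_\Omega(p;v)$, namely the affine embedding of a disk into the complex line through $p$ in the direction $v$. Write $R := \delta_\Omega(p;v)$ and consider the complex line $L = p + \Cb \cdot v$. By the very definition of $\delta_\Omega(p;v)$, no point of $\partial\Omega$ lying on $L$ is within Euclidean distance $R$ of $p$. Since $p \in \Omega$ and $\Omega$ is open, the first step is to argue that the affine map $g(\zeta) = p + \zeta v$ sends the disk $\{ \abs{\zeta} < R/\norm{v} \}$ entirely into $\Omega$. Indeed, for any $\zeta_0$ in this disk the image of the straight segment from $0$ to $\zeta_0$ is a segment in $L$ that starts at $p \in \Omega$ and never reaches distance $R$ from $p$; were it to leave $\Omega$, then at the supremum of $\{ t \in [0,1] : g(t\zeta_0) \in \Omega \}$ it would meet $\partial\Omega \cap L$ at a point of distance strictly less than $R$ from $p$, contradicting the definition of $R$.

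Having established this, I would rescale to the unit disk by setting $f(\lambda) = p + \frac{R}{\norm{v}}\, \lambda\, v$, so that $f \in \Hol(\Delta,\Omega)$ with $f(0) = p$. Since $f$ is affine, its complex derivative is $f'(0) = \frac{R}{\norm{v}} v$, and taking $\xi = \norm{v}/R$ gives $d(f)_0(\xi) = f'(0)\,\xi = v$. Thus $\xi$ is an admissible choice in the infimum defining the Kobayashi metric, which yields $k_\Omega(p;v) \leq \abs{\xi} = \norm{v}/R = \norm{v}/\delta_\Omega(p;v)$, as claimed.

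There is essentially no serious obstacle here; the only points requiring care are the degenerate boundary cases. If $L \cap \partial\Omega = \emptyset$ then $R = +\infty$, the whole line lies in $\Omega$, and letting the disk radius tend to infinity forces $k_\Omega(p;v) = 0$, consistent with interpreting the right-hand side as $0$. At the other extreme, $\delta_\Omega(p;v) \geq \delta_\Omega(p) > 0$ because $p$ is an interior point, so the right-hand side is never an indeterminate expression. The conceptual heart of the argument is just the elementary connectedness observation that the open metric disk of radius $R/\norm{v}$ in $L$ centered at $p$ avoids $\partial\Omega$ and hence lies in $\Omega$.
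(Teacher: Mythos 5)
Your proof is correct and is exactly the argument the paper intends: the paper gives no written proof beyond the remark that the bound follows ``by considering affine maps of the unit disk into a domain,'' and your affine competitor $f(\lambda)=p+\tfrac{R}{\norm{v}}\lambda v$, together with the connectedness argument showing the disk of radius $R/\norm{v}$ in the line $p+\Cb\cdot v$ lies in $\Omega$, is precisely that construction spelled out. The handling of the degenerate case $L\cap\partial\Omega=\emptyset$ is also fine.
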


For $\Cb$-convex sets Nikolov, Pflug, and Zwonek obatined a lower bound on the Kobayashi metric:

\begin{proposition}\label{prop:convex_lower_bd_1}\cite[Proposition 1]{NPZ2011}
Suppose $\Omega \subset \Cb^d$ is an open $\Cb$-convex set. Then
\begin{align*}
\frac{\norm{v}}{4\delta_{\Omega}(p;v)} \leq k_{\Omega}(p;v)
\end{align*}
for $p \in \Omega$ and $v \in \Cb^d$ non-zero.
 \end{proposition}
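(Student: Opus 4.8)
The plan is to deduce the lower bound from an upper bound on the derivatives of the holomorphic disks competing in the definition of $k_\Omega(p;v)$. Absorbing the scaling factor $\xi$, one has $k_\Omega(p;v) = \inf\{1/\abs{\mu}\}$, the infimum taken over all $f \in \Hol(\Delta,\Omega)$ with $f(0)=p$ and $f'(0) = \mu v$ for some $\mu \in \Cb$; here I use that $d(f)_0(\xi) = \xi f'(0) = v$ forces $f'(0)$ parallel to $v$ with $\abs{\xi} = 1/\abs{\mu}$. Thus it suffices to show that every such $f$ satisfies $\abs{\mu} \le 4r$, where $r := \delta_\Omega(p;v)/\norm{v}$. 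The mechanism is to push $f$ forward by a well-chosen complex affine projection onto the line $L := p + \Cb \cdot v$, landing in a simply connected planar domain whose distance to its boundary is exactly $r$; the Schwarz lemma then bounds the derivative, and the factor $4$ comes from the Koebe quarter theorem.

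Concretely, first I would record that $r = \inf\{\abs{\zeta} : p + \zeta v \in \partial\Omega\}$, fix a nearest boundary point $q_0 = p + \zeta_0 v$ with $\abs{\zeta_0} = r$, and use that a $\Cb$-convex open set is linearly convex \cite{APS2004} to produce a complex hyperplane $H = \{h = 0\}$ through $q_0$ with $H \cap \Omega = \emptyset$, where $h$ is a non-constant complex affine function. Since $p \in L \cap \Omega$ while $H \cap \Omega = \emptyset$, the hyperplane $H$ cannot contain $L$, so $h|_L$ is an affine isomorphism onto $\Cb$ and $H \cap L = \{q_0\}$. This lets me define the affine holomorphic projection $\pi := (h|_L)^{-1} \circ h : \Cb^d \rightarrow L$, which restricts to the identity on $L$ and whose fibers are the translates of $H$. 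Identifying $L$ with $\Cb$ so that $p \leftrightarrow 0$ and $q_0 \leftrightarrow \zeta_0$, I would set $S^* := \pi(\Omega)$ and check three things: $0 = \pi(p) \in S^*$; the fiber over $\zeta_0$ is exactly $H$, so $\zeta_0 \notin S^*$; and $B_r(0) \subseteq S^*$ because the disk $\{p + \zeta v : \abs{\zeta} < r\}$ lies in $\Omega$ and projects onto $B_r(0)$. Consequently $\mathrm{dist}(0, \partial S^*) = r$.

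To conclude, take any competitor $f$ as above and form $g := \pi \circ f : \Delta \rightarrow S^*$; then $g(0) = 0$ and, since $\pi$ restricts to the identity on $L$, $g'(0) = d(\pi)_p(\mu v) = \mu$. By the Schwarz--Pick lemma for the simply connected domain $S^*$ (i.e. $g$ does not increase the Poincar\'e metrics), $\abs{g'(0)} \le 1/\lambda_{S^*}(0)$, where $\lambda_{S^*}$ denotes the Poincar\'e density; and the Koebe quarter theorem yields $\lambda_{S^*}(0) \ge 1/(4\,\mathrm{dist}(0,\partial S^*)) = 1/(4r)$, whence $\abs{\mu} = \abs{g'(0)} \le 4r$. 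Taking the infimum over all competitors gives $k_\Omega(p;v) \ge 1/(4r) = \norm{v}/(4\delta_\Omega(p;v))$. The one step that genuinely requires the full strength of $\Cb$-convexity, rather than mere linear convexity, is the claim that $S^* = \pi(\Omega)$ is simply connected, since this is what makes $S^*$ conformally a disk and lets Koebe apply. I expect this to be the main obstacle, and I would handle it by invoking that the image of a $\Cb$-convex set under a surjective complex affine map is again $\Cb$-convex \cite{APS2004}, so that the planar open set $S^*$, being $\Cb$-convex and omitting $\zeta_0$, is connected and simply connected. The remaining bookkeeping---the transversality $H \cap L = \{q_0\}$ and tracking the Koebe constant---is routine.
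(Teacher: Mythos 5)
The paper offers no proof of this proposition---it is imported verbatim from \cite[Proposition 1]{NPZ2011}---so the only comparison available is with that source, whose argument yours essentially reproduces: project $\Omega$ along a separating complex hyperplane through a nearest point of $(p+\Cb\cdot v)\cap\partial\Omega$, observe that the planar image $S^*=\pi(\Omega)$ is $\Cb$-convex (hence connected and simply connected) and satisfies $\mathrm{dist}(0,\partial S^*)\le r=\delta_\Omega(p;v)/\norm{v}$ because it omits $\zeta_0$, and then combine the Schwarz--Pick lemma with the Koebe quarter theorem to bound $\abs{g'(0)}=\abs{\mu}\le 4r$. Your proof is correct as written; the only steps left implicit are the vacuous case $(p+\Cb\cdot v)\cap\partial\Omega=\emptyset$ (where $\delta_\Omega(p;v)=\infty$ and the inequality is trivial) and the attainment of the nearest point $q_0$ on the closed set $L\cap\partial\Omega$, both routine.
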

 
 Using Proposition~\ref{prop:convex_lower_bd_1} we can obtain a lower bound for the Kobayashi distance. 

\begin{lemma}\label{lem:convex_lower_bd_2}
Suppose $\Omega \subset \Cb^d$ is an open $\Cb$-convex set and $p,q \in \Omega$ are distinct. Let $L$ be the complex line containing $p$ and $q$. If $\xi \in L \setminus L \cap \Omega$, then 
\begin{align*}
 \frac{1}{4}\abs{\log \left( \frac{\norm{q-\xi}}{\norm{p-\xi}} \right)} \leq K_{\Omega}(p,q).
 \end{align*}
 \end{lemma}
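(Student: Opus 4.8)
The plan is to reduce the distance estimate to the infinitesimal lower bound from Proposition~\ref{prop:convex_lower_bd_1} by working along the complex line $L$ and exploiting the fact that, for a $\Cb$-convex set, the intersection $L \cap \Omega$ is itself a (planar) $\Cb$-convex set with the point $\xi$ lying on its boundary or outside. Let me set up a convenient affine coordinate $w$ on $L \cong \Cb$ so that $\xi$ corresponds to the origin; then $p$ and $q$ correspond to points $w_p = p-\xi$ and $w_q = q-\xi$ in $\Cb^d$ but lying on the line, and the quantity $\abs{\log(\norm{q-\xi}/\norm{p-\xi})}$ becomes $\abs{\log(\abs{w_q}/\abs{w_p})}$.

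\textbf{Reduction to a line integral.} First I would observe that by the definition of $K_\Omega$ as an infimum of lengths, it suffices to bound $\ell_\Omega(\sigma)$ from below for every absolutely continuous curve $\sigma:[a,b] \to \Omega$ with $\sigma(a)=p$ and $\sigma(b)=q$. The difficulty is that such a curve need not stay inside $L$; its velocity $\sigma'(t)$ will generally have components transverse to $L$. To handle this, I would apply Proposition~\ref{prop:convex_lower_bd_1} directly to $\sigma$, obtaining
\begin{align*}
\ell_\Omega(\sigma) = \int_a^b k_\Omega(\sigma(t);\sigma'(t))\, dt \geq \frac{1}{4}\int_a^b \frac{\norm{\sigma'(t)}}{\delta_\Omega(\sigma(t);\sigma'(t))}\, dt.
\end{align*}
The integrand is a ``radial'' infinitesimal length measured in the direction $\sigma'(t)$ toward the nearest boundary point of $\Omega$ along the complex line through $\sigma(t)$ in direction $\sigma'(t)$.

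\textbf{Key geometric step.} The main idea is that the integrand dominates the pullback of the logarithmic (Poincar\'e-type) length on the slit plane $\Cb \setminus \{\xi\}$. Concretely, I would compare $\norm{\sigma'(t)}/\delta_\Omega(\sigma(t);\sigma'(t))$ with $\abs{(d/dt)\log\norm{\sigma(t)-\xi}}$. The point $\xi$ lies in $L \setminus (L \cap \Omega)$, and $\Cb$-convexity of $\Omega$ forces the complex line through $\sigma(t)$ in the direction $\sigma'(t)$ to miss $\xi$ unless it equals $L$; more importantly, one shows $\delta_\Omega(\sigma(t);\sigma'(t)) \leq \norm{\sigma(t)-\xi}$ is \emph{not} quite what is needed — rather I would use that the nearest boundary point in direction $\sigma'(t)$ is no farther than the displacement needed to reach the ``shadow'' of $\xi$. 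The cleanest route is to note that for any $w \in \Omega$ and any direction $v$, the function $t \mapsto \log\norm{w+tv-\xi}$ has derivative at $t=0$ bounded by $\norm{v}/\delta_\Omega(w;v)$, because $\xi \notin \Omega$ implies $\delta_\Omega(w;v) \leq$ the distance from $w$ to the boundary encountered before reaching any point collinear with $\xi$. This yields the pointwise inequality
\begin{align*}
\abs{\frac{d}{dt}\log\norm{\sigma(t)-\xi}} \leq \frac{\norm{\sigma'(t)}}{\delta_\Omega(\sigma(t);\sigma'(t))}.
\end{align*}

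\textbf{Conclusion and the main obstacle.} Granting the pointwise bound, I would integrate and use the triangle inequality for the real-valued integral:
\begin{align*}
\int_a^b \frac{\norm{\sigma'(t)}}{\delta_\Omega(\sigma(t);\sigma'(t))}\, dt \geq \abs{\int_a^b \frac{d}{dt}\log\norm{\sigma(t)-\xi}\, dt} = \abs{\log\frac{\norm{q-\xi}}{\norm{p-\xi}}},
\end{align*}
and combining with the factor $1/4$ from Proposition~\ref{prop:convex_lower_bd_1} gives the claim after taking the infimum over $\sigma$. The step I expect to be the genuine obstacle is justifying the pointwise inequality relating $\delta_\Omega(w;v)$ to the logarithmic derivative of the distance to $\xi$: one must show that $\xi$ being outside $\Omega$ (together with $\Cb$-convexity, to guarantee $\xi$ really does obstruct the line $L$ and nearby directions) controls how fast $\norm{\sigma(t)-\xi}$ can change relative to the slit-plane distance $\delta_\Omega(\sigma(t);\sigma'(t))$. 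This is where $\Cb$-convexity enters essentially, since for a non-$\Cb$-convex set the one-dimensional slices need not be simply connected and the comparison with the slit plane can fail.
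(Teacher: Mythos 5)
The approach founders exactly where you predicted it would, and the obstacle is not just technical: the pointwise inequality
\begin{align*}
\abs{\frac{d}{dt}\log\norm{\sigma(t)-\xi}} \leq \frac{\norm{\sigma'(t)}}{\delta_\Omega(\sigma(t);\sigma'(t))}
\end{align*}
is false. Since $\abs{\frac{d}{dt}\log\norm{\sigma(t)-\xi}} \leq \norm{\sigma'(t)}/\norm{\sigma(t)-\xi}$ is essentially sharp, your inequality amounts to $\delta_\Omega(w;v) \leq \norm{w-\xi}$, and the mere fact that $\xi \notin \Omega$ gives no control on $\delta_\Omega(w;v)$ for directions $v$ transverse to $L$: the complex line $w+\Cb v$ need not come anywhere near $\xi$. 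Concretely, take $\Omega = \Delta \times R\Delta \subset \Cb^2$ with $R$ large, $\xi=(1,0)$, and a curve moving from $(0,0)$ to $(0,s)$ in the direction $v=(0,1)$; then $\delta_\Omega(\,\cdot\,;v) \approx R$ while $\norm{\sigma(t)-\xi}$ stays of order $1$, so the right-hand side is $O(1/R)$ while the left-hand side is not. A path from $p$ to $q$ can therefore accumulate large changes in $\log\norm{\sigma(t)-\xi}$ at negligible Kobayashi cost in the transverse directions, and your integration step collapses. (Your parenthetical claim that $\Cb$-convexity forces the line through $\sigma(t)$ in direction $\sigma'(t)$ to miss $\xi$ unless it equals $L$ is also not relevant to the bound you need.)

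The missing ingredient is the separation theorem for $\Cb$-convex sets: since $\xi \notin \Omega$, there is a complex \emph{hyperplane} $H$ with $\xi \in H$ and $H \cap \Omega = \emptyset$. The correct potential function is the distance to $H$, not the distance to the point $\xi$. The paper implements this by normalizing coordinates so that $\xi=0$ and $H=\{z_1=0\}$, and then projecting onto the first coordinate: $K_\Omega(p,q) \geq K_{P(\Omega)}(p_1,q_1)$, where $P(\Omega) \subset \Cb$ is again $\Cb$-convex and omits $0$. In one complex dimension $\delta_{P(\Omega)}(z;v) \leq \abs{z}$ \emph{does} hold (the omitted point $0$ lies on the unique complex line through $z$), so your logarithmic-derivative computation goes through verbatim there, and the collinearity of $p,q,\xi$ ensures $\abs{q_1}/\abs{p_1} = \norm{q-\xi}/\norm{p-\xi}$. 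If you prefer to avoid the projection, the equivalent fix is to replace $\norm{\sigma(t)-\xi}$ by $\abs{P(\sigma(t))}$ throughout: since $H \cap \Omega = \emptyset$, the line $w+\Cb v$ meets $H$ at distance $\norm{v}\abs{P(w)}/\abs{P(v)}$ from $w$ whenever $P(v)\neq 0$, giving $\norm{v}/\delta_\Omega(w;v) \geq \abs{P(v)}/\abs{P(w)} \geq \abs{\frac{d}{dt}\log\abs{P(\sigma(t))}}$, with the case $P(v)=0$ trivial. Either way, the essential use of $\Cb$-convexity is the existence of the separating hyperplane through $\xi$, which your write-up never invokes.
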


 \begin{remark} For our purposes the above estimate suffices, but the more precise estimate 
 \begin{align*}
  \frac{1}{4} \log \left(1+  \frac{\norm{p-q}}{\min\{ \delta_\Omega(p; q-p), \delta_\Omega(q; p-q)\} } \right) \leq K_{\Omega}(p,q)
  \end{align*}
  follows from the proof of Proposition 2 part (ii) in~\cite{NT2015}. 
 \end{remark}

\begin{proof}
Since $p,q, \xi$ are all contained in a single affine line the quantity 
\begin{align*}
\abs{\log \left( \frac{\norm{q-\xi}}{\norm{p-\xi}} \right)}
 \end{align*}
 is invariant under affine transformations. In particular we can assume that $\xi=0$, $p=(p_1,0,\dots,0)$, and $q = (q_1,0,\dots, 0)$. Since $\Omega$ is $\Cb$-convex there exists an complex hyperplane $H$ such that $0 \in H$ but $H \cap \Omega = \emptyset$ (see~\cite[Theorem 2.3.9]{APS2004}). Using another affine transformation we may assume in addition that 
 \begin{align*}
 H=\{ (0,z_1, \dots, z_{d-1}) \in \Cb^d : z_1, \dots, z_{d-1} \in \Cb\}.
 \end{align*}
 Now consider the projection $P:\Cb^d \rightarrow \Cb$ onto the first component. Then 
 \begin{align*}
 K_{\Omega}(p,q) \geq K_{P(\Omega)}(p_1,q_1).
 \end{align*}
 By~\cite[Theorem 2.3.6]{APS2004} the linear image of a $\Cb$-convex set is $\Cb$-convex. Hence $P(\Omega) \subset \Cb$ is a $\Cb$-convex open set. Since $P^{-1}(0)=H$ we see that $P(\Omega)$ does not contain zero. Now suppose that $\sigma: [0,1] \rightarrow P(\Omega)$ is a absolutely continuous curve with $\sigma(0) = p_1$ and $\sigma(1)=q_1$. Then since $0 \in \Cb \setminus P(\Omega)$, Proposition~\ref{prop:convex_lower_bd_1} implies that
 \begin{align*}
 \int_0^1 k_{P(\Omega)}(\sigma(t); \sigma^\prime(t)) dt 
 &\geq \frac{1}{4} \int_0^1 \frac{ \abs{\sigma^\prime(t)}}{\abs{\sigma(t)}} dt \geq \frac{1}{4} \int_0^1 \frac{ \abs{\frac{d}{dt}\abs{\sigma(t)}}}{\abs{\sigma(t)}} dt \\
 & \geq \frac{1}{4}\abs{ \int_0^1 \frac{ \frac{d}{dt}\abs{\sigma(t)}}{\abs{\sigma(t)}} dt } \geq\frac{1}{4}\abs{ \log \left( \frac{\abs{q_1}}{\abs{p_1}} \right)} \\
 & =  \frac{1}{4}\abs{\log \left( \frac{\norm{q-\xi}}{\norm{p-\xi}} \right)}.
 \end{align*}
 Since $\sigma$ was an arbitrary absolutely continuous curve joining $p_1$ to $q_1$ the Lemma follows.
 \end{proof}
 
 Using Lemma~\ref{lem:convex_lower_bd_2} we can obtain some information about the asymptotic geometry of the Kobayashi distance. 
 
 \begin{proposition}\label{prop:asym_geom}
Suppose $x, y \in \partial \Omega$ are distinct and $(p_n)_{n \in \Nb}, (q_m)_{m \in \Nb} \subset \Omega$ are sequences such that $p_n \rightarrow x$, $q_m \rightarrow y$, and
\begin{align*}
\liminf_{n,m \rightarrow \infty} K_{\Omega}(p_n, q_m) < \infty.
\end{align*}
If $L$ is the complex line containing $x$ and $y$, then the interior of $\overline{\Omega} \cap L$ in $L$ contains $x$ and $y$. 
\end{proposition}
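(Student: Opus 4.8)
The plan is to argue by contradiction, with Lemma~\ref{lem:convex_lower_bd_2} as the only real input. Suppose $x$ does not lie in the interior of $\overline{\Omega} \cap L$ relative to $L$. Then every neighborhood of $x$ in $L$ meets the complement of $\overline{\Omega}$, so there exist points $\xi \in L \setminus \overline{\Omega}$ arbitrarily close to $x$. Since $\liminf_{n,m} K_{\Omega}(p_n, q_m) < \infty$, I would first fix $M > 0$ and a sequence of pairs $(n_k, m_k)$ with $n_k, m_k \to \infty$ and $K_{\Omega}(p_{n_k}, q_{m_k}) \leq M$. Parametrize $L$ affinely by $t \mapsto x + t(y - x)$, so that $x$ corresponds to $t = 0$ and $y$ to $t = 1$, and write the chosen exterior point as $\xi = x + t_0(y - x)$, where $t_0 \in \Cb$ is nonzero (because $x \in \overline{\Omega}$ while $\xi \notin \overline{\Omega}$) and can be taken with $\abs{t_0}$ as small as we wish.

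The key step is to transport $\xi$ onto the complex lines $L_{n,m}$ determined by $p_n$ and $q_m$, so that the lemma applies with $p = p_n$, $q = q_m$. Using the same parameter value, set $\xi_{n,m} := p_n + t_0(q_m - p_n) \in L_{n,m}$. Since $p_n \to x$ and $q_m \to y$, one has $\xi_{n,m} \to \xi$, and because $\xi \notin \overline{\Omega}$ with $\overline{\Omega}$ closed, we get $\xi_{n,m} \notin \Omega$ for all large indices. Lemma~\ref{lem:convex_lower_bd_2} along the subsequence then yields
\begin{align*}
\frac{1}{4} \abs{\log \frac{\norm{q_{m_k} - \xi_{n_k,m_k}}}{\norm{p_{n_k} - \xi_{n_k,m_k}}}} \leq K_{\Omega}(p_{n_k}, q_{m_k}) \leq M.
\end{align*}
The routine affine identities $\norm{q_{m_k} - \xi_{n_k,m_k}} = \abs{1 - t_0}\, \norm{q_{m_k} - p_{n_k}}$ and $\norm{p_{n_k} - \xi_{n_k,m_k}} = \abs{t_0}\, \norm{q_{m_k} - p_{n_k}}$ show that the ratio inside the logarithm equals $\abs{1 - t_0}/\abs{t_0}$ for every $k$, so $\abs{\log(\abs{1 - t_0}/\abs{t_0})} \leq 4M$. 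But having fixed $M$ first, I am free to choose $\xi$ with $\abs{t_0}$ small enough that $\abs{1 - t_0}/\abs{t_0} > e^{4M}$, a contradiction. Hence $x$ lies in the interior of $\overline{\Omega} \cap L$; the same argument, now choosing $\xi$ near $y$ so that $t_0$ is close to $1$ and the ratio tends to $0$, handles $y$.

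The one delicate point is the order of quantifiers: the bound $M$ and the subsequence must be fixed \emph{before} $\xi$ is selected, precisely because $\abs{t_0}$ has to be chosen small in terms of $M$. The only other thing needing care is that $\xi_{n,m}$ genuinely lies outside $\Omega$ for large indices, which is exactly where openness of the complement of $\overline{\Omega}$ enters; everything else (that $p_n \neq q_m$ and $\xi_{n,m} \neq p_n$ for large indices, so that $L_{n,m}$ and the ratio are well defined, and the distance identities) is elementary. I do not expect any serious obstacle beyond bookkeeping the limits along the subsequence.
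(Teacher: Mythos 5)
Your proof is correct and follows essentially the same route as the paper's: both arguments reduce to Lemma~\ref{lem:convex_lower_bd_2} applied along the line through $p_n$ and $q_n$ to a point of that line lying outside $\Omega$, concluding that such a point cannot be close to $p_n$ relative to $\norm{p_n-q_n}$. The paper phrases this directly via the nearest exterior point on $L_n$ (giving a uniform disk $B_\epsilon(p_n)\cap L_n\subset\Omega$), whereas you argue by contradiction with a transported exterior point; the difference is only cosmetic.
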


\begin{proof}
By passing to subsequences we may suppose that there exists $M < \infty$ such that
\begin{align*}
K_{\Omega}(p_n, q_n) < M
\end{align*}
for all $n \in \Nb$. For each $n$, let $L_n$ be the complex affine line containing $p_n$ and $q_n$. Let 
\begin{align*}
\epsilon_n = \min\{\norm{\xi-p_n} : \xi \in  L_n \setminus \Omega \cap L_n\}
\end{align*}
and $\xi_n \in L_n \setminus \Omega \cap L_n$ be a point closest to $p_n$. Then by Lemma~\ref{lem:convex_lower_bd_2}
\begin{align*}
M &\geq \limsup_{n \rightarrow \infty} K_{\Omega}(p_n, q_n) \geq \limsup_{n \rightarrow \infty} \frac{1}{4} \log \frac{ \norm{q_n - \xi_n}}{\norm{p_n - \xi_n}} \\
& \geq   \limsup_{n \rightarrow \infty} \frac{1}{4} \log \frac{ \norm{q_n -p_n}- \epsilon_n}{\epsilon_n} \\
& \geq   \limsup_{n \rightarrow \infty} \frac{1}{4} \log \frac{ \norm{x -y}- \epsilon_n}{\epsilon_n}.
\end{align*}
Since $x \neq y$ there exists an $\epsilon>0$ such that $B_{\epsilon}(p_n) \cap L_n \subset L_n \cap \Omega$ for all $n$ sufficiently large. Which implies that $x$ is in the interior of $\overline{\Omega} \cap L$ in $L$. The same argument applies to $y$. 
\end{proof}

\section{Proof of Theorem~\ref{thm:no_flats_C-convexity}}

We begin with a sketch of the argument: \newline

\textbf{Idea of Proof:} \emph{We assume, for a contradiction, that $(\Omega, K_{\Omega})$ is Gromov hyperbolic and there exists a non-constant holomorphic map $\varphi: \Delta \rightarrow \partial \Omega$. Since $\partial \Omega$ is $C^1$ the existence of a non-constant holomorphic disk in the boundary implies the existence of a non-trivial affine disk~\cite[Proposition 7]{NPZ2011}. We will show that every inward pointing normal line of $\partial \Omega$ can be parameterized to be a quasi-geodesic and if two such quasi-geodesics terminate in the same non-trivial open affine disk in the boundary then they stay within a uniform bounded distance of each other. We then take limits to show that this also holds for two such quasi-geodesics which terminate in the same non-trivial closed affine disk in the boundary. But if the closed disk is maximal this will contradict Proposition~\ref{prop:asym_geom}.} \newline

If $\Omega \subset \Cb^d$ is an open set, $\partial \Omega$ is a $C^1$ hypersurface, and $x \in \partial \Omega$ let $T_x \partial\Omega \subset \Cb^d$ be the real hyperplane tangent to $\partial \Omega$ at $x$ and let $n_x \in \Cb^d$ be the inward pointing unit normal vector at $x$. 

We will repeatedly use the following observation in the proof of Theorem~\ref{thm:no_flats_C-convexity}:

\begin{observation}\label{obs:C1_domains}
Assume $\Omega \subset \Cb^d$ is a bounded open set and $\partial \Omega$ is a $C^1$ hypersurface. Then:
\begin{enumerate}
\item There exists $C,\epsilon_1 > 0$ so that 
\begin{align*}
\delta_\Omega(x+tn_x) \geq Ct 
\end{align*}
for any $x \in \partial \Omega$ and $t \in (0,\epsilon_1)$.
\item For any $R > 0$ there exists $\epsilon_2 > 0$ so that 
\begin{align*}
\delta_\Omega(x+tn_x; v) \geq Rt 
\end{align*}
for any $x \in \partial \Omega$, $v \in T_x\partial \Omega$, and $t \in (0,\epsilon_2)$.
\end{enumerate}
\end{observation}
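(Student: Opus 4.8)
The plan is to deduce both estimates from a single uniform $C^1$ graph description of $\partial\Omega$ and then argue by elementary Euclidean geometry in the resulting coordinates. Since $\Omega$ is bounded, $\partial\Omega$ is a compact $C^1$ hypersurface, so the Gauss map $x \mapsto n_x$ is uniformly continuous. Using this I would produce, for every $x \in \partial\Omega$, coordinates obtained by the rigid motion taking $x$ to $0$ and $n_x$ to the last real coordinate axis, in which $\partial\Omega$ is locally the graph $\{(w,s) : s = g_x(w),\ \abs{w} < \rho\}$ of a $C^1$ function with $g_x(0) = 0$ and $d(g_x)_0 = 0$, and in which $\Omega$ is locally $\{s > g_x(w)\}$. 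The point I expect to be the main technical obstacle is the \emph{uniformity}: a standard compactness argument (a finite subcover of boundary charts) lets me choose the radius $\rho > 0$ and a nondecreasing modulus $\omega$ with $\omega(0^+) = 0$, independent of $x$, so that $\abs{g_x(w)} \le \omega(\abs{w})\abs{w}$ for all $\abs{w} < \rho$. Everything after this is direct computation.

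For part (1), fix $x$ and work in its chart, so that $p := x + t n_x = (0,t)$. I would show $B_{t/4}(p) \subset \Omega$ for all small $t$. Any $(w,s) \in B_{t/4}(p)$ satisfies $\abs{w} < t/4$ and $s > 3t/4$, while the local graph lies at height $g_x(w) \le \omega(\abs{w})\abs{w} \le \omega(t/4)\cdot(t/4)$. Choosing $\epsilon_1$ small enough that $\omega(t/4) < 1$ and that $B_{t/4}(p)$ stays inside the chart for $t < \epsilon_1$, we get $g_x(w) < t/4 < s$, so $(w,s) \in \Omega$. Hence $\delta_\Omega(p) \ge t/4$, which is the claim with $C = 1/4$.

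For part (2) the decisive point is that the relevant directions are complex tangential. If $v \in T_x\partial\Omega$ satisfies in addition $i v \in T_x\partial\Omega$ (equivalently $v \perp n_x$ and $v \perp i n_x$), then both $v$ and $iv$ are tangent, so in the chart above the complex line $p + \Cb\cdot v$ lies entirely in the affine hyperplane $\{s = t\}$ parallel to $T_x\partial\Omega$. Normalizing $\norm{v} = 1$ and writing a point of the line as $p + \zeta v$ with $w$-coordinate $q'$, we have $\abs{q'} = \abs{\zeta}$ (since $v$ and $iv$ are orthonormal) while the height stays $t$; hence a boundary point on the line forces $t = g_x(q')$ with $\abs{q'} = \abs{\zeta}$, so $t \le \omega(\abs{\zeta})\abs{\zeta}$ by the graph bound. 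Given $R$, I would choose $\epsilon_2$ so small that $\omega(Rt) < 1/R$ and $(R+1)t < \rho$ for $t < \epsilon_2$; then any solution with $\abs{\zeta} < Rt$ lies in the chart and yields $t \le \omega(Rt)\cdot Rt < t$, a contradiction. Thus no boundary point occurs with $\abs{\zeta} < Rt$, i.e. $\delta_\Omega(p;v) \ge Rt$.

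Finally, I note that this blow-up genuinely requires $iv$ to be tangent: for $v$ in the direction $i n_x$ the complex line $p + \Cb\cdot v$ is the complex normal line through $x$, so $x \in (p + \Cb\cdot v)\cap\partial\Omega$ forces $\delta_\Omega(p;v) \le t$ and no estimate of the form $\ge Rt$ can hold. Accordingly, the effective content of (2) is for $v$ in the complex tangent space $T_x\partial\Omega \cap i\,T_x\partial\Omega$, which is exactly the set of directions along a complex affine disk in $\partial\Omega$ and the only case needed later.
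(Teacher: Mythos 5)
The paper offers no proof of this Observation, so there is nothing to compare against; your uniform $C^1$ graph description obtained by compactness is the natural route, and your proof of part (1) is correct. More importantly, you are right that part (2) is false as literally stated: the direction $in_x$ is real-orthogonal to $n_x$, hence lies in $T_x\partial\Omega$, and the complex line $(x+tn_x)+\Cb\cdot in_x$ is the complex normal line through $x$, so $\delta_\Omega(x+tn_x;in_x)\le t$ and the asserted inequality fails for every $R>1$ (the unit ball already realizes this). Your restriction to the complex tangent space $T_x\partial\Omega\cap i\,T_x\partial\Omega$ is the right hypothesis, and your proof of that version --- the line $p+\Cb v$ stays at height $t$ over the tangent hyperplane, so a boundary point at parameter $\zeta$ forces $t\le\omega(\abs{\zeta})\abs{\zeta}$ --- is sound.

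The one inaccuracy is your closing claim that the complex-tangential case is ``the only case needed later.'' In the proof of Lemma~\ref{lem:same_pt} the paper applies part (2) to every $w\in H=\{w:\angle(w,n_x)=\pi/2\}$, that is, to arbitrary real-tangent directions including those with a component along $in_x$, and moreover at base points $\sigma_t(s)$ that do not lie on the normal line; so your corrected statement is not a drop-in replacement there. The application is easily repaired --- for instance via the trivial inequality $\delta_\Omega(q;w)\ge\delta_\Omega(q)$ together with the part-(1)-type estimate $\delta_\Omega(x+t\lambda n_x+t(1-s)v')\ge t\lambda-\omega(Ct)Ct\ge t\lambda/2$, which changes the bound in Lemma~\ref{lem:same_pt} from $\norm{v'}$ to $2\norm{v'}/\lambda$ and leaves everything downstream intact --- but if you correct the statement you should also record which later application breaks and how to fix it.
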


\subsection{Quasi-geodesics in Gromov hyperbolic metric spaces} 

\begin{definition}
Suppose $(X,d)$ is a metric space, $A \geq 1$, and $B \geq 0$. If $I \subset \Rb$ is an interval, then a map $\sigma: I \rightarrow X$ is a \emph{$(A,B)$-quasi-geodesic} if 
\begin{align*}
\frac{1}{A} \abs{t-s} - B \leq d(\sigma(s), \sigma(t)) \leq A \abs{t-s} + B
\end{align*}
for all $s,t \in I$.
\end{definition}

We will need a basic property of quasi-geodesics in Gromov hyperbolic metric spaces. 

\begin{proposition}\label{prop:gromov_basic}
Suppose $(X,d)$ is a proper geodesic Gromov hyperbolic metric space. For any $A \geq 1$ and $B \geq 0$ there exists $M \geq 0$ such that if $\sigma_1,\sigma_2: \Rb_{\geq 0} \rightarrow X$ are $(A,B)$-quasi-geodesics and 
\begin{align*}
\liminf_{t \rightarrow \infty} d(\sigma_1(t), \sigma_2) < \infty,
\end{align*}
then 
\begin{align*}
\sup_{t \geq 0} \Big( \max\left\{  d(\sigma_1(t), \sigma_2), \ d(\sigma_2(t), \sigma_1)\right\}\Big) \leq M+2d(\sigma_1(0), \sigma_2(0)).
\end{align*}

\end{proposition}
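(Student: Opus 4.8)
The plan is to reduce the claim to the standard Morse/stability lemma for quasi-geodesics in Gromov hyperbolic spaces (the geodesic shadowing property quoted as item (3) in the introduction). The key geometric input I would isolate first is that the hypothesis $\liminf_{t\to\infty} d(\sigma_1(t),\sigma_2) < \infty$ forces $\sigma_1$ and $\sigma_2$ to be \emph{asymptotic} rays: since a quasi-geodesic ray in a proper Gromov hyperbolic space fellow-travels a genuine geodesic ray, and two geodesic rays that come back within bounded distance of each other infinitely often must converge to the same point of the Gromov boundary, I would conclude that $\sigma_1$ and $\sigma_2$ define the same boundary point $\xi \in \partial_\infty X$. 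This is where the properness and Gromov hyperbolicity are genuinely used, and it is the step I expect to require the most care.

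\medskip

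Concretely, here are the steps in order. First, by the stability of quasi-geodesics (Bridson--Haefliger, Chapter III.H, Theorem 1.7), there is a constant $M_0 = M_0(A,B,\delta)$ such that each $\sigma_i$ lies in the $M_0$-neighborhood of some geodesic ray (or bi-infinite geodesic), and conversely. Second, I would pass from the $\liminf$ hypothesis to the statement that $\sigma_1,\sigma_2$ have the same endpoint at infinity: choose times $t_n \to \infty$ with $d(\sigma_1(t_n),\sigma_2) \leq C$ for a fixed constant $C$, pick points $\sigma_2(s_n)$ realizing this, note $s_n \to \infty$ because $\sigma_2$ is a quasi-geodesic and $\sigma_1(t_n)$ escapes every bounded set, and use this to show the Gromov products $(\sigma_1(t) \mid \sigma_2(s))$ stay large for suitable large $t,s$, i.e.\ the rays converge to the same boundary point. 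Third, with both rays sharing an endpoint at infinity, I would invoke the standard fact that two $(A,B)$-quasi-geodesic rays converging to the same point of $\partial_\infty X$ uniformly fellow-travel, with the fellow-traveling constant depending only on $A,B,\delta$ and on the initial distance $d(\sigma_1(0),\sigma_2(0))$.

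\medskip

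The bound of the precise form $M + 2\,d(\sigma_1(0),\sigma_2(0))$ is then obtained by a triangle-inequality bookkeeping argument typical of the ``divergence''/fellow-traveling estimates for Gromov hyperbolic spaces: the intrinsic constant $M$ absorbs all the dependence on $A,B,\delta$, while the additive $2\,d(\sigma_1(0),\sigma_2(0))$ accounts for the rays starting at possibly distant points. I would organize this final estimate symmetrically in the two rays so that both $\sup_t d(\sigma_1(t),\sigma_2)$ and $\sup_t d(\sigma_2(t),\sigma_1)$ are controlled simultaneously, which is exactly what the displayed $\max$ requires.

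\medskip

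The main obstacle, I expect, is the second step: converting the weak hypothesis (a $\liminf$ over a single parameter $t$ of the point-to-set distance $d(\sigma_1(t),\sigma_2)$) into the strong conclusion that the two rays are genuinely asymptotic in the Gromov-boundary sense. One has to rule out the degenerate possibility that $\sigma_1(t)$ returns close to $\sigma_2$ only through points $\sigma_2(s_n)$ with bounded $s_n$, which would not constrain the asymptotic behavior; the quasi-geodesic lower bound $\frac1A|t-s|-B \le d(\sigma_i(s),\sigma_i(t))$ is precisely what forces $s_n \to \infty$ and hence makes the argument go through. Once asymptoticity is established, the remainder is the well-documented uniform fellow-traveling of quasi-geodesic rays with a common endpoint, and I would cite the relevant results in \cite{BH1999} rather than reprove them.
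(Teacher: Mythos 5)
Your outline is correct, and its first half coincides with the paper's proof: both arguments begin by invoking the stability of quasi-geodesics \cite[Chapter III.H, Theorem 1.7]{BH1999} to replace each $\sigma_i$ by a geodesic ray with the same origin, at the cost of a constant depending only on $A$, $B$, $\delta$. Where you diverge is in the treatment of the resulting geodesic rays. The paper stays entirely inside the space: it picks times $T_n \rightarrow \infty$ at which $\sigma_1$ is within a fixed distance $C$ of $\sigma_2$, builds the finite geodesic rectangle on $\sigma_1(0),\sigma_1(T_n),\sigma_2(T_n'),\sigma_2(0)$, observes that rectangles are $2\delta$-thin, and for $t$ in the middle range concludes $d(\sigma_1(t),\sigma_2(t)) \leq 4\delta + 2d(\sigma_1(0),\sigma_2(0))$ before letting $n \rightarrow \infty$. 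You instead pass to the Gromov boundary: the $\liminf$ hypothesis (together with the quasi-geodesic lower bound, which as you correctly note forces the parameters $s_n$ to tend to infinity) shows the Gromov products blow up, so the two rays are asymptotic to a common $\xi \in \partial_\infty X$, and you then appeal to uniform fellow-traveling of asymptotic rays. Your route is conceptually cleaner but imports more machinery; the paper's is self-contained and yields the explicit constant directly. The one point in your version that genuinely requires care is the final citation: the proposition needs the fellow-traveling bound to be \emph{additive} in $d(\sigma_1(0),\sigma_2(0))$ with coefficient at most $2$, not merely some function of it, so you should either locate a reference stating that precise quantitative form or supply the short ideal-triangle argument (thinness of the triangle with vertices $\sigma_1(0)$, $\sigma_2(0)$, $\xi$ gives $\sup_{t}d(\sigma_1(t),\sigma_2) \leq \delta' + d(\sigma_1(0),\sigma_2(0))$ for geodesic rays, which suffices). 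With that caveat addressed, your argument is complete.
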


Before starting the proof of the proposition we will make one observation, but first some notation: A \emph{geodesic rectangle} in a metric space $(X,d)$ is a choice of four geodesic segments 
\begin{align*}
 \sigma_i : [a_i,b_i] \rightarrow X \quad i=1,2,3,4
\end{align*}
so that $\sigma_1(b_1) = \sigma_2(a_2)$, $\sigma_2(b_2) = \sigma_3(a_3)$, $\sigma_3(b_3) = \sigma_4(a_4)$, and $\sigma_4(b_4)=\sigma_1(a_1)$. A geodesic rectangle is said to be \emph{$\delta$-thin} if any point on any of the sides of the rectangle is within distance $\delta$ of the other three sides. By connecting a pair of opposite vertices in a geodesic rectangle by a geodesic and considering the resulting two geodesic triangles the following observation is immediate:

\begin{observation}
 Suppose $(X,d)$ is a proper geodesic $\delta$-hyperbolic metric space. Then every geodesic rectangle in $(X,d)$ is $(2\delta)$-thin.
\end{observation}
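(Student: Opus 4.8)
The plan is to prove this exactly as the preceding sentence indicates: cut the rectangle along a diagonal and reduce to the $\delta$-thinness of the two resulting geodesic triangles. First I would fix notation for the four vertices, writing $V_1 = \sigma_1(a_1) = \sigma_4(b_4)$, $V_2 = \sigma_1(b_1) = \sigma_2(a_2)$, $V_3 = \sigma_2(b_2) = \sigma_3(a_3)$, and $V_4 = \sigma_3(b_3) = \sigma_4(a_4)$, so that $\sigma_1$ runs $V_1 \to V_2$, $\sigma_2$ runs $V_2 \to V_3$, $\sigma_3$ runs $V_3 \to V_4$, and $\sigma_4$ runs $V_4 \to V_1$. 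Since $(X,d)$ is geodesic, I would fix a geodesic segment $\gamma$ joining the opposite vertices $V_1$ and $V_3$. Then the three segments $\sigma_1, \sigma_2, \gamma$ form a geodesic triangle $T_1$ on the vertices $V_1, V_2, V_3$, and the three segments $\gamma, \sigma_3, \sigma_4$ form a geodesic triangle $T_2$ on the vertices $V_1, V_3, V_4$. By hypothesis each $T_i$ is $\delta$-thin.

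Next I would take an arbitrary point $p$ lying on one of the four sides of the rectangle and, using the symmetry between the two triangles, assume without loss of generality that $p$ lies on $\sigma_1$ or $\sigma_2$, i.e.\ on a genuine side of $T_1$. Applying $\delta$-thinness of $T_1$ to $p$, it is within distance $\delta$ of the union of the other two sides of $T_1$, namely $\sigma_2 \cup \gamma$ (resp.\ $\sigma_1 \cup \gamma$). If $p$ is within $\delta$ of the genuine rectangle side among these, then since that side is one of the three other sides of the rectangle we immediately obtain the required bound, with $\delta \le 2\delta$ to spare.

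The only case that calls for the second triangle is when $p$ is within $\delta$ of the diagonal $\gamma$, which is not itself one of the four sides of the rectangle. Here I would choose $q \in \gamma$ with $d(p,q) \le \delta$ and observe that $q$ lies on a side of $T_2$; applying $\delta$-thinness of $T_2$ to $q$ produces a point $r \in \sigma_3 \cup \sigma_4$ with $d(q,r) \le \delta$. The triangle inequality then gives $d(p,r) \le d(p,q) + d(q,r) \le 2\delta$, and since $\sigma_3, \sigma_4$ are among the three other sides of the rectangle, the desired bound holds for $p$ as well. As every point on every side of the rectangle falls into one of these two cases, this shows the rectangle is $(2\delta)$-thin.

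There is no genuine obstacle here; the argument is entirely elementary, and the only point requiring care is precisely the source of the factor of $2$: the diagonal $\gamma$ is shared by the two triangles but is \emph{not} one of the four rectangle sides, so a point that is close only to $\gamma$ must be relayed through the thinness of the second triangle back onto a genuine side, which accumulates two applications of the $\delta$ estimate. I would close by remarking that, since the roles of the two triangles are symmetric, the same reasoning covers points of $\sigma_3$ and $\sigma_4$ verbatim after exchanging $T_1$ and $T_2$.
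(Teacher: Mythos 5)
Your proof is correct and takes exactly the approach the paper indicates: connect a pair of opposite vertices of the rectangle by a geodesic diagonal, apply $\delta$-thinness to each of the two resulting triangles, and relay a point that is close only to the diagonal through the thinness of the second triangle, which is precisely where the factor $2\delta$ arises. The paper declares the observation immediate from this decomposition, and your write-up supplies just those details, so there is nothing to add.
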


\begin{proof}[Proof of Proposition~\ref{prop:gromov_basic}]
Assume that $(X,d)$ is $\delta$-hyperbolic for some $\delta \geq 0$.

By~\cite[Chapter III.H, Theorem 1.7]{BH1999} for any $A \geq 1$ and $B \geq 0$ there exists $M_1 >0$ such that for any $(A,B)$-quasi-geodesic $\sigma: \Rb_{\geq 0} \rightarrow X$ there exists a geodesic $\overline{\sigma}:\Rb_{\geq 0} \rightarrow X$ such that $\sigma(0) = \overline{\sigma}(0)$ and
\begin{align*}
\sup_{t \geq 0} \Big(\max\{ d(\sigma(t), \overline{\sigma}), \ d(\overline{\sigma}(t), \sigma) \}\Big) \leq M_1.
\end{align*}
Thus the proposition reduces to the following claim: there exists $M \geq 0$ such that if $\sigma_1, \sigma_2 : \Rb_{\geq 0} \rightarrow X$ are two geodesic rays and 
\begin{align*}
\liminf_{t \rightarrow \infty} K_{\Omega}(\sigma_1(t), \sigma_2) < \infty
\end{align*}
then 
\begin{align*}
\sup_{t >0} d(\sigma_1(t), \sigma_2(t)) \leq M + 2d(\sigma_1(0), \sigma_2(0)).
\end{align*}
Pick a sequence $T_n \rightarrow \infty$ such that 
\begin{align*}
\sup_{n \in \Nb} d(\sigma_1(T_n), \sigma_2)=C < \infty
\end{align*}
for some $C \geq 0$. Let $\gamma_0$ be a geodesic segment joining $\sigma_1(0)$ to $\sigma_2(0)$. Fix $n>0$ sufficiently large and let $\gamma_n$ be a geodesic joining $\sigma_1(T_n)$ to a closest point $\sigma_2(T_n^\prime)$ on $\sigma_2$. 

Now the geodesic segments $\gamma_0$, $\sigma_1|_{[0,T_1]}$, $\gamma_n$, and $\sigma_2|_{[0,T_1^\prime]}$ form a geodesic rectangle which is $(2\delta)$-thin. But for 
\begin{align*}
t \in \left[d(\sigma_1(0), \sigma_2(0)) + 2 \delta, T_n - (C + 2 \delta)\right]
\end{align*}
the point $\sigma_1(t)$ is not $(2\delta)$-close to either $\gamma_0$ or $\gamma_n$, hence there exists $t^\prime$ such that $d(\sigma_1(t), \sigma_2(t^\prime)) \leq 2 \delta$. Now 
\begin{align*}
2 \delta \geq d(\sigma_1(t), \sigma_2(t^\prime)) \geq \abs{t^\prime - t} - d(\sigma_1(0), \sigma_2(0)).
\end{align*}
Hence $\abs{t^\prime - t} \leq 2 \delta + d(\sigma_1(0), \sigma_2(0))$ and so
\begin{align*}
d(\sigma_1(t), \sigma_2(t)) \leq 4\delta + d(\sigma_1(0), \sigma_2(0))
\end{align*}
for $t \in \left[d(\sigma_1(0), \sigma_2(0)) + 2 \delta, T_n - (C + 2 \delta)\right]$. Hence  
\begin{align*}
d(\sigma_1(t), \sigma_2(t)) \leq 4\delta + 2d(\sigma_1(0), \sigma_2(0))
\end{align*}
for $t \leq T_n - (C + 2 \delta)$. Sending $n \rightarrow \infty$ proves the claim and thus the proposition.
\end{proof}

\subsection{Quasi-geodesics in $\Cb$-convex domains} 

For $\Cb$-convex domains with $C^1$ boundary normal lines can be parametrized as quasi-geodesics:

\begin{lemma}\label{lem:quasi_geod}
Suppose $\Omega \subset \Cb^d$ is a bounded $\Cb$-convex open set and $\partial \Omega$ is a $C^1$ hypersurface. Then there exists $A \geq 1$ and $\epsilon >0$ so that for any $x \in \partial \Omega$ the curve 
\begin{align*}
\sigma_{x}&: \Rb_{\geq 0} \rightarrow \Omega \\
\sigma_{x}&(t)=x+e^{-t}\epsilon n_x
\end{align*}
is a $(A,0)$-quasi-geodesic in $(\Omega, K_{\Omega})$.
\end{lemma}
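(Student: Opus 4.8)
The plan is to show that the curve $\sigma_x(t) = x + e^{-t}\epsilon n_x$ is a quasi-geodesic by bounding $K_\Omega(\sigma_x(s), \sigma_x(t))$ both above and below by an affine function of $\abs{t-s}$, with constants independent of $x \in \partial\Omega$. Since $\sigma_x$ parametrizes a segment of the inward normal line exponentially, the natural guess is that the Kobayashi length along this curve behaves like $\abs{t-s}$, so the key will be to obtain matching infinitesimal estimates from the Observation (upper bound) and Proposition~\ref{prop:convex_lower_bd_1} (lower bound), and then integrate.

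For the \textbf{upper bound}, I would estimate $K_\Omega(\sigma_x(s),\sigma_x(t)) \le \ell_\Omega(\sigma_x|_{[s,t]})$ directly. Along the curve, $\sigma_x'(\tau) = -e^{-\tau}\epsilon n_x$, so by the Observation $k_\Omega(\sigma_x(\tau); \sigma_x'(\tau)) \le \norm{\sigma_x'(\tau)}/\delta_\Omega(\sigma_x(\tau); \sigma_x'(\tau))$. Since $\sigma_x'(\tau)$ points in the normal direction $n_x$, the relevant one-dimensional slice is just the normal line, and $\delta_\Omega(\sigma_x(\tau); n_x)$ is comparable to $\delta_\Omega(x + e^{-\tau}\epsilon n_x)$; part (1) of Observation~\ref{obs:C1_domains} gives $\delta_\Omega(x+un_x) \ge Cu$ for small $u$, so with $u = e^{-\tau}\epsilon$ I get $k_\Omega \le \norm{\sigma_x'(\tau)}/(C e^{-\tau}\epsilon) = 1/C$, a constant. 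Integrating over $[s,t]$ yields $K_\Omega(\sigma_x(s),\sigma_x(t)) \le \frac{1}{C}\abs{t-s}$, which is the $A\abs{t-s}$ side with $B=0$, uniformly in $x$. I must choose $\epsilon$ smaller than the $\epsilon_1$ from the Observation so the estimate applies for all $\tau \ge 0$.

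For the \textbf{lower bound}, I would apply Lemma~\ref{lem:convex_lower_bd_2} to the two points $p = \sigma_x(s)$ and $q = \sigma_x(t)$, which lie on the common complex line $L = x + \Cb\, n_x$. Taking $\xi = x \in \partial\Omega \cap L$ as the boundary point on this line gives $K_\Omega(p,q) \ge \frac{1}{4}\abs{\log(\norm{q-x}/\norm{p-x})} = \frac{1}{4}\abs{\log(e^{-t}\epsilon / e^{-s}\epsilon)} = \frac{1}{4}\abs{t-s}$. This is exactly the $\frac{1}{A}\abs{t-s}$ side, again with constant independent of $x$ and with $B=0$. Combining the two bounds, $\sigma_x$ is an $(A,0)$-quasi-geodesic for $A = \max\{1/C, 4\}$.

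The main subtlety I anticipate is justifying that $\delta_\Omega(\sigma_x(\tau); n_x) \ge C e^{-\tau}\epsilon$ in the upper-bound step: $\delta_\Omega(\cdot\,; v)$ measures distance to the boundary \emph{within the complex line} through the point in direction $v$, which a priori differs from the Euclidean distance $\delta_\Omega$ to the whole boundary. However, since $x$ itself lies on this normal line and is a boundary point, the point $\xi = x$ certifies that the nearest boundary point on the line is at distance at most $e^{-\tau}\epsilon$; for the lower estimate on $\delta_\Omega(\cdot\,;n_x)$ one uses that the normal direction at a $C^1$ boundary point points into $\Omega$, so the full Euclidean distance bound $\delta_\Omega(x+un_x) \ge Cu$ from Observation~\ref{obs:C1_domains}(1) controls the slice distance from below as well (a point at Euclidean distance $\ge Cu$ from $\partial\Omega$ is in particular at distance $\ge Cu$ from any boundary point on the line). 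I would also verify that $\epsilon$ can be chosen uniformly over $\partial\Omega$, which follows from the compactness of $\partial\Omega$ and the uniform constants supplied by the Observation. Once these geometric comparisons are in place, the two integrations are routine.
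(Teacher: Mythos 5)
Your proposal is correct and follows essentially the same route as the paper: the lower bound via Lemma~\ref{lem:convex_lower_bd_2} with $\xi = x$ on the normal line, and the upper bound by integrating $k_\Omega(p;v) \le \norm{v}/\delta_\Omega(p;v) \le \norm{v}/\delta_\Omega(p)$ together with Observation~\ref{obs:C1_domains}(1), yielding $A = \max\{4, 1/C\}$. The ``subtlety'' you flag is resolved exactly as the paper implicitly does, since the slice distance $\delta_\Omega(p;v)$ always dominates the Euclidean distance $\delta_\Omega(p)$.
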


\begin{proof}
Using Observation~\ref{obs:C1_domains} there exists $C, \epsilon >0$ so that
\begin{align*}
\delta_\Omega(x+tn_x) \geq Ct
\end{align*}
for any $x \in \partial \Omega$ and $t \in (0,\epsilon)$.

Now fix $x \in \partial \Omega$. By Lemma~\ref{lem:convex_lower_bd_2}
\begin{align*}
K_{\Omega}(\sigma_{x}(s),\sigma_{x}(t)) \geq  \frac{1}{4}\abs{ \log \frac{\norm{\sigma_{x}(t)-x}}{\norm{\sigma_{x}(s)-x}}} =  \frac{1}{4} \abs{t-s}.
\end{align*}
And if $s \leq t$ then
\begin{align*}
K_{\Omega}(\sigma_{x}(s),\sigma_{x}(t)) 
&\leq \int_{s}^t k_{\Omega}(\sigma_{x}(r); \sigma_{x}^\prime(r)) dr 
\leq \int_s^t \frac{ \norm{\sigma_{x}^\prime(r)} dr}{\delta_{\Omega}(\sigma_{x}(r))} \\
& \leq  \int_s^t \frac{ e^{-r}\epsilon }{Ce^{-r}\epsilon} dr
 \leq \frac{1}{C}\abs{t-s}.
\end{align*}
So $\sigma_x$ is a $(A,0)$-quasi-geodesic where $A = \max \{ 4, 1/C\}$.
\end{proof}

\subsection{Normal lines in domains with $C^1$ boundary}

\begin{proposition}\label{prop:C1}
Assume $\Omega \subset \Cb^d$ is an open set and $\partial \Omega$ is a $C^1$ hypersurface. Suppose that $L$ is a complex affine line so that $L \cap \partial \Omega$ has non-empty interior in $L$. If $\Uc$ is a connected component of the interior of $L \cap \partial \Omega$ and $x,y \in \Uc$, then 
\begin{align*}
\limsup_{t \searrow 0} K_\Omega(x+tn_x, y+tn_y) < \infty.
\end{align*}
\end{proposition}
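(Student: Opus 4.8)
The plan is to bound $K_\Omega(x+tn_x,y+tn_y)$ by a constant independent of $t$ for all small $t>0$, which gives the finite $\limsup$. Choose affine coordinates so that $L=\Cb e_1$. Since $\Uc\subset L\cap\partial\Omega$ is open in $L$, the complex direction $\Cb e_1$ is tangent to $\partial\Omega$ along $\Uc$, so that moving a base point of $\Uc$ inward by a fixed normal vector is a transverse motion into $\Omega$; the feature I will really use, however, is only that the Gauss map $a\mapsto n_a$ is continuous on $\Uc$ (all that $C^1$-regularity provides). Let $C,\epsilon_1>0$ be the constants of Observation~\ref{obs:C1_domains}(1). The first step is a reduction to a local estimate. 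As $\Uc$ is a connected open subset of $L\cong\Cb$ it is path connected, so I fix a path $\gamma\colon[0,1]\to\Uc$ from $x$ to $y$; its image is compact, so by uniform continuity of the Gauss map I may cover it by finitely many round disks $D_1,\dots,D_N\subset\Uc$ with centers $c_j$ on which $\sup_{a\in D_j}\norm{n_a-n_{c_j}}\le C/4$. Choosing points $x=p_0,p_1,\dots,p_N=y$ along $\gamma$ with $p_{j-1},p_j\in D_j$ and applying the triangle inequality reduces the problem to bounding each $K_\Omega(p_{j-1}+tn_{p_{j-1}},p_j+tn_{p_j})$. I stress that because $\partial\Omega$ is only $C^1$ the Gauss map need not be differentiable, so I cannot lift $\gamma$ to a rectifiable curve $s\mapsto\gamma(s)+tn_{\gamma(s)}$ and integrate its Kobayashi length; this is exactly why I pass to a finite chain and estimate each step separately.

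For a single step write $p=p_{j-1}$, $p'=p_j$, $c=c_j$, $D=D_j$, and split by the triangle inequality:
\[
K_\Omega(p+tn_p,p'+tn_{p'})\le K_\Omega(p+tn_p,p+tn_c)+K_\Omega(p+tn_c,p'+tn_c)+K_\Omega(p'+tn_c,p'+tn_{p'}).
\]
The two outer terms are handled by a ball estimate. By Observation~\ref{obs:C1_domains}(1), $\delta_\Omega(p+tn_p)\ge Ct$, so $B_{Ct}(p+tn_p)\subset\Omega$, while $\norm{(p+tn_c)-(p+tn_p)}=t\norm{n_c-n_p}\le Ct/4$. Since the Kobayashi distance from the center of a ball of radius $r$ to a point at distance $\rho<r$ equals $\tanh^{-1}(\rho/r)$, monotonicity of $K$ under inclusions gives $K_\Omega(p+tn_p,p+tn_c)\le\tanh^{-1}(1/4)$, independent of $t$; the third term is bounded the same way.

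The middle term is where the flatness $L\subset\partial\Omega$ enters, and is the crux. I claim the flat disk $D+tn_c$ lies in $\Omega$: for $\zeta\in D\subset\Uc\subset\partial\Omega$ Observation~\ref{obs:C1_domains}(1) gives $\delta_\Omega(\zeta+tn_\zeta)\ge Ct$, while $\norm{tn_c-tn_\zeta}\le Ct/4<Ct$, so $\zeta+tn_c\in B_{Ct}(\zeta+tn_\zeta)\subset\Omega$. Thus $D+tn_c$ is a round disk lying in the complex affine line $L+tn_c$ and contained in $\Omega$, hence biholomorphic by a translation to $D$, and monotonicity of $K$ yields
\[
K_\Omega(p+tn_c,p'+tn_c)\le K_{D+tn_c}(p+tn_c,p'+tn_c)=K_D(p,p'),
\]
which is finite and independent of $t$. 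Summing the three bounds over $j=1,\dots,N$ bounds $K_\Omega(x+tn_x,y+tn_y)$ uniformly for $t\in(0,\epsilon_1)$, so the $\limsup$ is finite.

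The main obstacle is precisely this middle term: one must exhibit, inside $\Omega$ at height $\sim t$ above the flat boundary piece, an entire complex disk in a translate of $L$ through the two points, so that motion along the flat direction costs only a bounded amount in the Kobayashi metric, in sharp contrast to the $\log(1/t)$ cost of horizontal motion near a strictly pseudoconvex point. Fitting this disk into $\Omega$ is exactly where the hypothesis $L\subset\partial\Omega$ combines with the $C^1$-regularity through Observation~\ref{obs:C1_domains}, and the $C^1$-only regularity is simultaneously what forces the finite-chain argument in place of a direct length integration.
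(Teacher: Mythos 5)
Your proof is correct, and its skeleton --- chain $x$ to $y$ through the flat piece $\Uc$, and at each step separate a tangential move along a translate of $L$ from a small correction of the inward direction --- is the same as the paper's; but both key estimates are implemented quite differently. For the tangential move the paper localizes once via connectedness, passes to adapted coordinates with a local defining function $F$ to verify the containment $tn_x + \{(0,z,0,\dots,0) : \abs{z}<\epsilon\} \subset \Omega$, and then integrates the infinitesimal upper bound $k_\Omega(p;v) \leq \norm{v}/\delta_\Omega(p;v)$ along the straight segment using Observation~\ref{obs:C1_domains}(2); you instead obtain the containment $D + tn_c \subset \Omega$ pointwise from inscribed Euclidean balls and then invoke monotonicity of the Kobayashi distance under the holomorphic inclusion of the one-dimensional disk $D+tn_c$, which gives the $t$-independent Poincar\'e bound $K_D(p,p')$ with no coordinates or integration. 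For the direction correction the paper proves and applies Lemma~\ref{lem:same_pt}, which handles an arbitrary angle less than $\pi/2$ by writing $v=\lambda n_x+v'$ and paying $-\tfrac{1}{C}\log \lambda+\norm{v'}$; because your chain forces consecutive normals to differ by at most $C/4$, you can replace all of this by the one-line bound $\tanh^{-1}(1/4)$ from the inscribed ball $B_{Ct}(p+tn_p)$, bypassing Lemma~\ref{lem:same_pt} entirely at the cost of the finite covering bookkeeping. One small point, present in the paper's argument as well: Observation~\ref{obs:C1_domains} is stated for bounded $\Omega$ while the proposition does not assume boundedness; your use of it only over the compact set $\gamma([0,1])$ is harmless, since uniform constants there follow from a local graph description plus compactness.
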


We begin the proof of Proposition~\ref{prop:C1} with a lemma:

\begin{lemma} \label{lem:same_pt} Assume $\Omega \subset \Cb^d$ is an open set and $\partial \Omega$ is a $C^1$ hypersurface. If $x \in \partial \Omega$ and $v \in \Cb^d$ is a unit vector with $\angle(n_x, v) < \pi/2$, then 
\begin{align*}
\limsup_{t \searrow 0} K_\Omega(x+tn_x, x+tv) < \infty.
\end{align*}
\end{lemma}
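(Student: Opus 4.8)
The plan is to join the two points by the real segment between them and bound its Kobayashi length independently of $t$. Write $n=n_x$ and $\theta=\angle(n,v)\in[0,\pi/2)$, so that $\Real\langle v,n\rangle=\cos\theta>0$ for the real inner product on $\Cb^d\cong\Rb^{2d}$. For small $t>0$ I would set
\[
\gamma_t(s)=x+t\big((1-s)n+sv\big),\qquad s\in[0,1],
\]
so that $\gamma_t(0)=x+tn$ and $\gamma_t(1)=x+tv$, and $\norm{\gamma_t'(s)}=t\norm{v-n}$ is constant in $s$. Using the elementary upper bound $k_\Omega(p;w)\le \norm{w}/\delta_\Omega(p;w)\le\norm{w}/\delta_\Omega(p)$ recorded in Section~3, everything reduces to proving a uniform lower bound of the form $\delta_\Omega(\gamma_t(s))\ge c\,t$, with $c>0$ independent of $s\in[0,1]$ and of all small $t$.

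The main obstacle is exactly this distance estimate, and it is where both hypotheses enter. After a Euclidean isometry I would assume $x=0$, $n=e$ a unit vector, and $T_x\partial\Omega=e^{\perp}$. Since $\partial\Omega$ is $C^1$, near $0$ it is the graph $\{ue+y' : u=g(y')\}$ of a $C^1$ function $g$ on $e^\perp$ with $g(0)=0$ and $dg_0=0$, and $\Omega=\{ue+y' : u>g(y')\}$ locally. Given $\eta>0$ there is $r>0$ with $\norm{dg_{y'}}\le\eta$ for $\norm{y'}\le r$, hence $\abs{g(y')}\le\eta\norm{y'}$ there. The $e$-component of $\gamma_t(s)$ is $t\big((1-s)+s\cos\theta\big)\ge t\cos\theta$, while its $e^\perp$-part has norm $\le t\norm{(1-s)n+sv}\le t$; thus for $t<r/2$ the height of $\gamma_t(s)$ above the graph is at least $t\cos\theta-\eta t$. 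A direct computation of the distance from a point to the graph of a function with gradient bounded by $\eta$ gives $\delta_\Omega(\gamma_t(s))\ge (t\cos\theta-\eta t)/\sqrt{1+\eta^2}$, once one checks (using $\delta_\Omega(\gamma_t(s))\le\norm{\gamma_t(s)-x}\le t$) that the nearest boundary point is local. Choosing $\eta=\tfrac12\cos\theta$ yields $\delta_\Omega(\gamma_t(s))\ge c\,t$ with $c>0$, uniformly in $s$. The point to stress is that $C^1$-regularity, i.e.\ $dg_0=0$, is precisely what makes the tangential excursion negligible ($O(\eta t)$) against the normal gain ($\ge t\cos\theta$), and the acute-angle hypothesis is what guarantees this normal gain is strictly positive.

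Finally I would assemble the estimate: since $\delta_\Omega(\gamma_t(s))>0$ the whole segment (endpoints included) lies in $\Omega$ for small $t$, and
\[
K_\Omega(x+tn,x+tv)\le \ell_\Omega(\gamma_t)\le\int_0^1\frac{\norm{\gamma_t'(s)}}{\delta_\Omega(\gamma_t(s))}\,ds\le\int_0^1\frac{t\norm{v-n}}{c\,t}\,ds=\frac{\norm{v-n}}{c}.
\]
As this bound is independent of $t$, taking $\limsup_{t\searrow0}$ gives the claim.
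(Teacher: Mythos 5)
Your argument is correct, but it takes a different route from the paper's. The paper splits the journey into two legs via the triangle inequality: first it slides along the normal ray from $x+tn_x$ to $x+t\lambda n_x$ (where $v=\lambda n_x+v'$ with $v'\perp n_x$), bounding that leg by $-\frac{1}{C}\log\lambda$ using $\delta_\Omega(x+tn_x)\geq Ct$; then it traverses the purely tangential segment from $x+t\lambda n_x$ to $x+tv$, and the key point there is the \emph{directional} distance estimate $\delta_\Omega(x+tn_x;w)\geq Rt$ for $w\in T_x\partial\Omega$ with $R$ as large as desired (Observation~\ref{obs:C1_domains}(2)), which makes that leg cost at most $\norm{v'}$. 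You instead take the single straight segment and prove the plain distance bound $\delta_\Omega(\gamma_t(s))\geq ct$ directly from the local $C^1$ graph representation, exploiting that the normal gain $(1-s)+s\cos\theta\geq\cos\theta$ dominates the $O(\eta t)$ tangential excursion once the gradient of the defining function is small; your distance-to-a-Lipschitz-graph computation and the localization of the nearest boundary point (via $\delta_\Omega(\gamma_t(s))\leq t$) are both sound. What your approach buys is self-containedness: you do not need the uniform-in-$x$ content of Observation~\ref{obs:C1_domains} at all, only the graph description at the single point $x$, and you get an explicit constant $\norm{v-n}/c$. What the paper's two-leg decomposition buys is that each leg isolates one geometric mechanism (radial logarithmic cost versus the flatness of the tangent plane), which is the same directional-distance technique reused in the proof of Proposition~\ref{prop:C1}, so the paper's version integrates more cleanly with the surrounding arguments. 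Either proof is acceptable.
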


\begin{proof}
By hypothesis $v = \lambda n_x + v^\prime$ where $\lambda \in (0,1]$ and $\angle(v^\prime,n_x) =\pi/2$. Then 
 \begin{align*}
K_\Omega(x+tn_x, x+tv) \leq K_\Omega(x+tn_x, x+t\lambda n_x) + K_\Omega(x+t\lambda n_x, x+tv).
\end{align*}
We will bound each term individually. 

By Observation~\ref{obs:C1_domains} there exists $C, \epsilon_1 >0$ so that 
\begin{align*}
\delta_\Omega(x+tn_x) \geq C t \text{ for } t \in (0,\epsilon_1).
\end{align*}
Then for $t \in (0,\epsilon_1)$, define the curve $\gamma_t: [\lambda,1] \rightarrow \Omega$ by 
\begin{align*}
\gamma_t(s) = x+stn_x.
\end{align*}
Then
\begin{align*}
K_\Omega(x+tn_x, x+t\lambda n_x)
& \leq \int_{\lambda}^1 k_\Omega(\gamma_t(s); \gamma_t^\prime(s)) ds 
\leq \int_{\lambda}^1 \frac{\norm{\gamma_t^\prime(s)}}{\delta_\Omega(\gamma_t(s))} ds  \\
& \leq \int_{\lambda}^1 \frac{1}{Cs} ds = -\frac{1}{C}\log(\lambda).
 \end{align*}
 
 Next let $H = \{ w \in \Cb^d : \angle(w,n_x) =\pi/2\}$. Then $x+H= T_x \partial \Omega$ and so by Observation~\ref{obs:C1_domains} there exists $\epsilon_2 > 0$ so that: 
 \begin{align*}
 \delta_\Omega(x+tn_x; w) \geq \left(\frac{1+\norm{v^\prime}}{\lambda}\right)t
 \end{align*}
for all $w \in H$ and $t \in (0, \epsilon_2)$. Then for $t \in (0,\epsilon_2)$, define the curve $\sigma_t: [0,1] \rightarrow \Omega$ by 
\begin{align*}
\sigma_t(s) = s(x + t\lambda n_x) + (1-s)(x+tv) = x+t\lambda n_x + t(1-s)v^\prime
\end{align*}
Now $\sigma_t^\prime(s) = -tv^\prime \in H$ and so 
 \begin{align*}
 \delta_\Omega(\sigma_t(s); \sigma_t^\prime(s)) \geq t.
\end{align*}
Then 
\begin{align*}
K_\Omega(x+t\lambda n_x, x+tv)
& \leq \int_{0}^1 k_\Omega(\sigma_t(s); \sigma_t^\prime(s)) ds 
\leq \int_{0}^1 \frac{\norm{\sigma_t^\prime(s)}}{\delta_\Omega(\sigma_t(s); \sigma_t^\prime(s))} ds  \\
& \leq \int_{0}^1 \frac{t \norm{v^\prime}}{t} ds = \norm{v^\prime}.
 \end{align*}
Combining the two estimates above we obtain the lemma.
 \end{proof}

\begin{proof}[Proof of Proposition~\ref{prop:C1}] Since $\Uc$ is connected it is enough to show: for any $x \in \Uc$ there exists a neighborhood $\Vc$ of $x$ in $\Uc$ so that:
\begin{align*}
\limsup_{t \searrow 0} K_\Omega(x+tn_x, y+tn_y) < \infty \text{ for all } y \in \Vc.
\end{align*}
So fix $x \in \Uc$. By translating $\Omega$ by an affine isometry we can assume that: $x=0$, $n_x = (i,0, \dots, 0)$, $L=  \{ (0,z,0,\dots,0) : z \in \Cb\}$, and $T_x \partial \Omega= \Rb \times \Cb^{d-1}$. Then since $\partial \Omega$ is $C^1$ there exists $\epsilon >0$ and a $C^1$ function $F:  \Rb \times \Cb^{d-1} \rightarrow \Rb$ such that 
\begin{align*}
\{ z \in \Cb^d : \norm{z}_\infty < \epsilon \} & \cap \Omega 
=  \{ (u+iw, z_2, \dots, z_d) : w > F(u,z_2, \dots, z_d) \}.
\end{align*}
Now since $L=  \{ (0,z,0,\dots,0) : z \in \Cb\}$ we can decrease $\epsilon$ so that 
\begin{align*}
F(0,z_2,0,\dots, 0) =0
\end{align*}
 for $\abs{z_2} < \epsilon$. This implies that 
 \begin{align}
\label{eq:containment}
tn_x + \{ (0,z,0,\dots, 0) : \abs{z} < \epsilon\} \subset \Omega
\end{align}
when $t \in (0,\epsilon)$. Now let $\Vc = \{ (0,z,0,\dots, 0) : \abs{z} < \epsilon/2\}$. By decreasing $\epsilon$ we can assume that for any $y \in \Vc$ we have $\angle(n_y, n_x) < \pi/2$. 

Now fix $y \in \Vc$. Then
\begin{align*}
\limsup_{t \searrow 0} K_\Omega(y+tn_x, y+tn_y) < \infty 
\end{align*}
by Lemma~\ref{lem:same_pt} and so it is enough to show that 
\begin{align*}
\limsup_{t \searrow 0} K_\Omega(x+tn_x, y+tn_x) < \infty.
\end{align*}
For $t \in (0,\epsilon)$ define the curve $\gamma_t : [0,1] \rightarrow \Omega$ by 
\begin{align*}
\gamma_t(s) = s(x+tn_x) + (1-s)(y+tn_x) =(1-s)y + tn_x.
\end{align*}
Then the inclusion in~\ref{eq:containment} implies that 
\begin{align*}
\delta_\Omega(\gamma_t(s));\gamma_t^\prime(s)) \geq \epsilon/2.
\end{align*}
Then for $t \in (0,\epsilon)$ we have
\begin{align*}
K_\Omega(x+t n_x, & y+tn_x) \leq  \int_0^1  k_\Omega(\gamma_t(s); \gamma_t^\prime(s)) ds \\
& \leq \int_0^1  \frac{\norm{\gamma_t^\prime(s)}}{\delta_\Omega(\gamma_t(s));\gamma_t^\prime(s)) } ds 
\leq \frac{2 \norm{y}}{\epsilon}.
 \end{align*}
\end{proof}

\subsection{The proof of Theorem~\ref{thm:no_flats_C-convexity}}

Suppose for a contradiction that $\Omega$ is a bounded $\Cb$-convex domain, $\partial \Omega$ is a $C^1$ hypersurface, $(\Omega, K_{\Omega})$ is Gromov hyperbolic, and there exists a non-trivial holomorphic map $\varphi: \Delta \rightarrow \partial \Omega$. 

By~\cite[Proposition 7]{NPZ2011} there exists an non-trivial affine map $\ell: \Delta \rightarrow \partial \Omega$. Let $L$ be the complex affine line containing $\ell(\Delta)$ and let $\Uc$ be a connected component of the interior of $L \cap \partial \Omega$ in $L$.  

By Lemma~\ref{lem:quasi_geod} there exists $\epsilon >0$ and $A \geq 1$ such that for each $x \in \partial \Omega$  the curve $\sigma_{x}(t) = x+e^{-t} \epsilon n_x$ is an $(A,0)$-quasi-geodesic in $(\Omega, K_{\Omega})$. Let $M$ be as in the statement of Proposition~\ref{prop:gromov_basic} for $(A,0)$-quasi-geodesics. 

Since $\partial \Omega$ is compact, there exists an $R>0$ such that 
\begin{align*}
K_{\Omega}(\sigma_{x}(0), \sigma_{y}(0)) \leq R
\end{align*}
for all $x,y \in \partial \Omega$.

Now if $x,y \in \Uc$ then by Proposition~\ref{prop:C1} we have 
\begin{align*}
\limsup_{t \rightarrow \infty} K_\Omega( \sigma_x(t), \sigma_y(t)) < \infty.
\end{align*}
So by Proposition~\ref{prop:gromov_basic}
\begin{align}
\label{eq:bd_dist}
\sup_{t \geq0} \Big(\max\left\{ K_{\Omega}(\sigma_{y}(t), \sigma_{x}),  K_{\Omega}(\sigma_{x}(t), \sigma_{y})\right\}\Big)  \leq M+2R.
\end{align}
But then the inequality in~\ref{eq:bd_dist} holds for all $x,y \in \overline{\Uc}$. But this contradicts Proposition~\ref{prop:asym_geom} when $x \in \Uc$ and $y \in \partial \Uc$.

\section{$L$-convexity and limits in the local Hausdorff topology} 

When $L \geq 1$, Mercer~\cite{M1993} calls a convex set $\Omega$ \emph{$L$-convex} if there exists $C > 0$ such that
\begin{align*}
\delta_{\Omega}(p;v) \leq C\delta_{\Omega}(p)^{1/L}
\end{align*}
for all $p \in \Omega$ and $v \in \Cb^d$ non-zero. Every bounded strongly convex set is $2$-convex and Mercer~\cite[Section 3]{M1993} extended results about limits of complex geodesics in strongly convex sets (see~\cite[Section 2]{CHL1988}) to general $L$-convex sets. In~\cite{Z2014}, we extended some of these results to sequences of geodesic lines $\sigma_n : \Rb\rightarrow \Omega_n$ when $\Omega_n$ is a sequence of convex sets which converge in the local Hausdorff topology and satisfy a uniform $L$-convex property. In this section we recall some of these results. 

Given a set $A \subset \Cb^d$, let $\Nc_{\epsilon}(A)$ denote the \emph{$\epsilon$-neighborhood of $A$} with respect to the Euclidean distance. The \emph{Hausdorff distance} between two bounded sets $A,B \subset \Cb^d$ is given by
\begin{align*}
d_{H}(A,B) = \inf \left\{ \epsilon >0 : A \subset \Nc_{\epsilon}(B) \text{ and } B \subset \Nc_{\epsilon}(A) \right\}.
\end{align*}
Equivalently, 
\begin{align*}
d_H(A,B) = \max\left\{\sup_{a \in A}\inf_{b \in B} \norm{a-b}, \sup_{b \in B} \inf_{a \in A} \norm{a-b} \right\}.
\end{align*}
The Hausdorff distance induces a topology on the space of bounded open convex sets in $\Cb^d$.

The space of all convex sets in $\Cb^d$ can be given a topology from the local Hausdorff semi-norms. For $R >0$ and a set $A \subset \Cb^d$ let $A^{(R)} := A \cap B_R(0)$. Then define the \emph{local Hausdorff semi-norms} by
\begin{align*}
d_H^{(R)}(A,B) := d_H(A^{(R)}, B^{(R)}).
\end{align*}
A sequence of open convex sets $A_n$ is said to converge in the local Hausdorff topology to an open convex set $A$ if there exists some $R_0 \geq 0$ so that 
\begin{align*}
\lim_{n \rightarrow \infty} d_H^{(R)}(A_n,A) = 0
\end{align*}
for all $R \geq R_0$.

Unsurprisingly, the Kobayashi distance is continuous with respect to the local Hausdorff topology.

\begin{theorem}\cite[Theorem 4.1]{Z2014}\label{thm:Kob_cont}
\label{thm:dist_conv}
Suppose $\Omega_n$ is a sequence of $\Cb$-proper convex open sets converging to a $\Cb$-proper convex open set $\Omega$ in the local Hausdorff topology. Then 
\begin{align*}
K_{\Omega}(x,y) = \lim_{n \rightarrow \infty} K_{\Omega_n}(x,y)
\end{align*}
for all $x,y \in \Omega$ uniformly on compact sets of $\Omega \times \Omega$.
\end{theorem}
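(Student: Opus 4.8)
The plan is to prove the two one-sided estimates $\limsup_{n} K_{\Omega_n}(x,y) \le K_\Omega(x,y)$ and $\liminf_{n} K_{\Omega_n}(x,y) \ge K_\Omega(x,y)$ separately for each fixed $x,y \in \Omega$, and then to upgrade pointwise convergence to uniform convergence on compacta by an equicontinuity argument. Throughout I would lean on the two-sided comparison $\frac{\norm{v}}{4\delta_{\Omega}(p;v)} \le k_\Omega(p;v) \le \frac{\norm{v}}{\delta_{\Omega}(p;v)}$ from Proposition~\ref{prop:convex_lower_bd_1} and the Observation, together with two elementary consequences of local Hausdorff convergence: that $\delta_{\Omega_n}(p;v) \to \delta_{\Omega}(p;v)$ for $p \in \Omega$, and that every compact subset of $\Omega$ is contained in $\Omega_n$ for all large $n$.

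For the upper bound I would argue infinitesimally. Given $p \in \Omega$, $v \neq 0$, and a near-extremal disk $f \in \Hol(\Delta,\Omega)$ with $f(0)=p$ and $d(f)_0(\xi)=v$, the dilated disks $f_r(z):=f(rz)$ satisfy $f_r(\overline{\Delta}) \Subset \Omega$, hence $f_r(\overline{\Delta}) \subset \Omega_n$ for large $n$; using $f_r$ as a competitor yields $\limsup_n k_{\Omega_n}(p;v) \le k_\Omega(p;v)$. Fixing a curve $\sigma$ in $\Omega$ from $x$ to $y$ of length within $\epsilon$ of $K_\Omega(x,y)$, that curve lies in a compact set and hence in every $\Omega_n$ for large $n$, and the integrands $k_{\Omega_n}(\sigma(t);\sigma'(t))$ are dominated by $\norm{\sigma'(t)}/\delta_{\Omega_n}(\sigma(t);\sigma'(t))$, which is uniformly bounded on $[a,b]$. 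The reverse Fatou lemma then gives $\limsup_n K_{\Omega_n}(x,y) \le \limsup_n \ell_{\Omega_n}(\sigma) \le \ell_\Omega(\sigma) \le K_\Omega(x,y)+\epsilon$.

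The lower bound is the heart of the matter and is where $\Cb$-properness is essential. I would take near-minimizing curves $\sigma_n$ in $\Omega_n$ from $x$ to $y$, parametrized by Kobayashi arclength, realizing $\liminf_n K_{\Omega_n}(x,y)$ along a subsequence. Two uniform estimates are required. First, a confinement statement: the balls $\{z : K_{\Omega_n}(x,z) \le \ell\}$ all lie in a fixed Euclidean ball $B_R(0)$ for large $n$; this is exactly where the hypothesis that $\Omega$ and the $\Omega_n$ contain no complex affine line is used, to stop the near-geodesics from escaping to infinity. Second, Lemma~\ref{lem:convex_lower_bd_2} gives $\delta_{\Omega_n}(\sigma_n(t)) \ge c(\eta) > 0$ uniformly once $t$ is Kobayashi-distance at least $\eta$ from both endpoints, so the curves do not hug the boundary in their interior. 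Since $\norm{\sigma_n'} \le 4\,\delta_{\Omega_n}(\sigma_n;\sigma_n')\,k_{\Omega_n}(\sigma_n;\sigma_n')$ by Proposition~\ref{prop:convex_lower_bd_1}, the $\sigma_n$ are uniformly Euclidean-Lipschitz inside $B_R(0)$, so Arzel\`a--Ascoli produces (after passing to a subsequence) a limit curve $\sigma$, and the two estimates force $\sigma$ to take values in $\Omega$ away from its endpoints. Finally I would prove lower semicontinuity of the infinitesimal metric, namely $\liminf_n k_{\Omega_n}(p_n;v_n) \ge k_\Omega(p;v)$ whenever $p_n \to p \in \Omega$ and $v_n \to v$, by a normal families argument: near-extremal disks for $\Omega_n$ are locally uniformly bounded by the confinement estimate, their limit is a holomorphic disk into $\overline{\Omega}$ through the interior point $p$, and convexity with the maximum principle forces its image into $\Omega$, making it a competitor for $k_\Omega(p;v)$. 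Combining this with Fatou's lemma gives $K_\Omega(x,y) \le \ell_\Omega(\sigma) \le \liminf_n \ell_{\Omega_n}(\sigma_n) = \liminf_n K_{\Omega_n}(x,y)$.

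With the pointwise limit in hand, I would upgrade to uniform convergence on compact subsets of $\Omega \times \Omega$ by observing that the family $\{K_{\Omega_n}\}$ is equicontinuous there: the upper comparison bounds each $k_{\Omega_n}$, and hence each $K_{\Omega_n}$, by a fixed Lipschitz constant on a compact set once $n$ is large, since $\delta_{\Omega_n}$ is uniformly bounded below on compact subsets of $\Omega$. Pointwise convergence plus equicontinuity on a compact set then yields uniform convergence, while $K_\Omega$ is itself continuous. The main obstacle is the lower bound: all three of its ingredients — uniform confinement of near-geodesics, interiority of the limit curve, and lower semicontinuity of $k$ via the normal-families and maximum-principle argument — rest on quantitative consequences of $\Cb$-properness, and the delicate point is making the confinement \emph{uniform} across the entire sequence $\Omega_n$ rather than for a single fixed domain.
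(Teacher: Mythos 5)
The paper does not actually prove this statement: it is imported verbatim from \cite[Theorem 4.1]{Z2014}, so there is no in-paper argument to compare against. Judged on its own terms, your upper bound is sound — eventual containment of compact subsets of $\Omega$ in $\Omega_n$ (which holds for convex sets under local Hausdorff convergence), upper semicontinuity of $k$ via dilated competitor disks, and dominated convergence along a \emph{fixed} near-optimal curve all go through — and the equicontinuity upgrade to uniform convergence on compacta is fine.

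The lower bound, however, has a genuine gap at its final step. Arzel\`a--Ascoli gives only $C^0$ convergence of the reparametrized curves $\sigma_n$ to $\sigma$ and no control of $\sigma_n'$ against $\sigma'$, so the pointwise lower semicontinuity $\liminf_n k_{\Omega_n}(p_n;v_n)\ge k_\Omega(p;v)$ cannot be combined with Fatou's lemma to yield $\ell_\Omega(\sigma)\le\liminf_n\ell_{\Omega_n}(\sigma_n)$: the integrands $k_{\Omega_n}(\sigma_n(t);\sigma_n'(t))$ are evaluated at vectors that need not converge to $\sigma'(t)$, and a highly oscillatory $\sigma_n$ can have large length while converging uniformly to a short curve. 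What you actually need is lower semicontinuity of the length functional under $C^0$ convergence, which is a $\Gamma$-liminf statement requiring convexity of $v\mapsto k(p;v)$ and a real argument, not Fatou. The standard way to close this for convex domains is to avoid real curves entirely: by Lempert's theorem $K_{\Omega_n}(x,y)$ is nearly realized by a single analytic disk $f_n:\Delta\rightarrow\Omega_n$ with $f_n(0)=x$, $f_n(s_n)=y$ and $\rho(0,s_n)\le K_{\Omega_n}(x,y)+1/n$ (with $\rho$ the Poincar\'e distance); your confinement estimate makes $\{f_n\}$ a normal family, Montel together with the maximum-principle argument you already describe forces the limit disk into $\Omega$, and then $K_\Omega(x,y)\le\rho(0,\lim s_n)\le\liminf_n K_{\Omega_n}(x,y)$. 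Two smaller points: the uniform confinement of Kobayashi balls (and the companion bound $\sup_v\delta_{\Omega_n}(p;v)<\infty$ on compacta, needed for your Lipschitz estimate) is asserted rather than proved — the mechanism you name is correct, but it requires the convexity argument producing a complex line in $\Omega$ to be written out; and Lemma~\ref{lem:convex_lower_bd_2} only controls $\delta_{\Omega_n}(q;v)$ in the direction of the line through the two points, so the claimed bound $\delta_{\Omega_n}(\sigma_n(t))\ge c(\eta)$ needs instead the projection-onto-a-supporting-half-plane estimate.
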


\begin{remark} Greene and Krantz~\cite{GK1982} also study the continuity of intrinsic metrics under various notions of convergence of domains.
\end{remark}

In the proof of Theorem~\ref{thm:main} we will be interested in the limit of a sequence of geodesics $\sigma_n : \Rb \rightarrow \Omega_n$ when the target domains converges in the local Hausdorff topology. As the next two Propositions show when the sequence $\Omega_n$ has uniform convexity properties these limits have nice properties. 

\begin{proposition}\cite[Proposition 7.8]{Z2014}\label{prop:m_convex}
Suppose $\Omega_n$ is a sequence of $\Cb$-proper convex open sets converging to a $\Cb$-proper convex open set $\Omega$ in the local Hausdorff topology and there exists $C>0$, $L \geq 1$, and a compact set $K$ satisfying  
\begin{align*}
\delta_{\Omega_n}(p;v) \leq C \delta_{\Omega_n}(p)^{1/L}
\end{align*}
for every $n$ sufficiently large, $p \in \Omega_n \cap K$, and $v \in \Cb^d$ non-zero. 

If $\sigma_n : \Rb \rightarrow \Omega_n$ is a sequence of geodesics and there exists $a_n \leq b_n$ such that 
\begin{enumerate}
\item $\sigma_n([a_n,b_n]) \subset K$,
\item $\lim_{n \rightarrow \infty} \norm{\sigma_n(a_n)-\sigma_n(b_n)} > 0$, 
\end{enumerate}
then there exists $T_n \in [a_n,b_n]$ such that a subsequence of $t\rightarrow \sigma_n(t+T_n)$ converges locally uniformly to a geodesic $\sigma:\Rb \rightarrow \Omega$.  
\end{proposition}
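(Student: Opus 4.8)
The plan is to recenter each geodesic at a carefully chosen parameter $T_n$, extract a locally uniform limit by Arzel\`a--Ascoli, and then verify that the limit is a genuine geodesic line taking values in the interior of $\Omega$. The hypotheses will be used as follows: convexity of the $\Omega_n$ gives a uniform \emph{upper} bound on the Kobayashi length of $\sigma_n|_{[a_n,b_n]}$, while the uniform $L$-convexity together with condition (2) is what lets us choose $T_n$ so that $\sigma_n(T_n)$ stays a definite distance from the boundary.

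First I would show $b_n-a_n=K_{\Omega_n}(\sigma_n(a_n),\sigma_n(b_n))$ is bounded above, uniformly in $n$. Parametrizing the straight Euclidean segment from $\sigma_n(a_n)$ to its midpoint $\mu_n=\tfrac12(\sigma_n(a_n)+\sigma_n(b_n))$ and applying the affine-disk upper bound $k_\Omega(p;v)\leq \norm{v}/\delta_\Omega(p;v)$, one uses that for the convex set $\Omega_n$ the directional distance in the direction $\sigma_n(b_n)-\sigma_n(a_n)$ along this segment is at least the distance to $\sigma_n(b_n)$; a direct computation then gives $K_{\Omega_n}(\sigma_n(a_n),\mu_n)\leq \log 2$, and symmetrically for the other half, so $b_n-a_n\leq 2\log 2$. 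Next let $T_n\in[a_n,b_n]$ maximize $t\mapsto \delta_{\Omega_n}(\sigma_n(t))$ and put $\rho_n=\delta_{\Omega_n}(\sigma_n(T_n))$. If $\rho_n\to 0$, then since $\sigma_n([a_n,b_n])\subset K$ the $L$-convexity hypothesis gives $\delta_{\Omega_n}(\sigma_n(t);v)\leq C\rho_n^{1/L}$ all along the segment, and Proposition~\ref{prop:convex_lower_bd_1} then forces $b_n-a_n\geq \norm{\sigma_n(a_n)-\sigma_n(b_n)}/(4C\rho_n^{1/L})\to\infty$ by condition (2), contradicting the upper bound. Hence $\rho_n\geq \rho_0$ for some $\rho_0>0$ independent of $n$.

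Set $\tilde\sigma_n(t)=\sigma_n(t+T_n)$, so $\tilde\sigma_n(0)=\sigma_n(T_n)\in K$ with $\delta_{\Omega_n}(\tilde\sigma_n(0))\geq\rho_0$. For a fixed $S>0$ and $\abs{t}\leq S$, the refined estimate recorded in the remark after Lemma~\ref{lem:convex_lower_bd_2}, together with a uniform upper bound on $\delta_{\Omega_n}(p;v)$ for $p\in K$ — finite because $\Omega$ is $\Cb$-proper and inherited by the $\Omega_n$ via local Hausdorff convergence — bounds $\norm{\tilde\sigma_n(t)-\tilde\sigma_n(0)}$ by a constant depending only on $S$ and $K$. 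Thus $\tilde\sigma_n([-S,S])$ lies in a fixed compact set $K_S$, and on $K_S$ the same lower bound yields $K_{\Omega_n}(p,q)\geq c_S\norm{p-q}$, so the maps $\tilde\sigma_n|_{[-S,S]}$ are uniformly Euclidean-Lipschitz. Arzel\`a--Ascoli and a diagonal argument over $S\in\Nb$ then produce a subsequence converging locally uniformly to a continuous curve $\sigma:\Rb\to\overline\Omega$.

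It remains to see that $\sigma$ is a geodesic line inside $\Omega$. Since $\delta_{\Omega_n}(\tilde\sigma_n(0))\geq\rho_0$ and $\Omega_n\to\Omega$, the basepoint satisfies $\delta_\Omega(\sigma(0))\geq\rho_0$, so $\sigma(0)\in\Omega$. On any interval about $0$ on which $\sigma$ takes interior values, Theorem~\ref{thm:Kob_cont} applied to the converging points gives $K_\Omega(\sigma(s),\sigma(t))=\lim_n K_{\Omega_n}(\tilde\sigma_n(s),\tilde\sigma_n(t))=\abs{s-t}$, so $\sigma$ is a geodesic there. Let $[0,t_0)$ be the maximal such interval; if $t_0<\infty$ then $\sigma(t)\to\sigma(t_0)\in\partial\Omega$ as $t\nearrow t_0$, but completeness of $(\Omega,K_\Omega)$ — which holds by Barth's theorem~\cite{B1980} since $\Omega$ is $\Cb$-proper — forces $K_\Omega(\sigma(0),\sigma(t))=t\to\infty$, a contradiction. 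The same argument for $t<0$ shows $\sigma(\Rb)\subset\Omega$, whence $\sigma$ is a geodesic on all of $\Rb$. The step I expect to be the main obstacle is the construction of $T_n$ in the second paragraph: one must rule out the geodesics collapsing onto $\partial\Omega_n$, and this is precisely where the uniform $L$-convexity and the non-degeneracy condition (2) are indispensable, the remaining difficulty being to promote the interiority of the single basepoint $\sigma(0)$ to the whole limit line via completeness.
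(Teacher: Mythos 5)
The decisive step of your argument fails at the very beginning. You claim that for $x$ on the Euclidean segment from $p=\sigma_n(a_n)$ to $q=\sigma_n(b_n)$ one has $\delta_{\Omega_n}(x;q-p)\geq\norm{q-x}$, and deduce $b_n-a_n=K_{\Omega_n}(p,q)\leq 2\log 2$. But $\delta_{\Omega}(x;v)$ is by definition the distance from $x$ to $\partial\Omega$ inside the whole complex line $x+\Cb\cdot v$, not along the real ray from $x$ toward $q$; the boundary may be very close to $x$ in the directions $\pm v$ or $\pm iv$ even though the segment $[x,q]$ lies in $\Omega$. Already for $\Omega_n=\Delta\subset\Cb$ with $p=1-\epsilon$ and $q=-(1-\epsilon)$ one has $\delta_\Delta(p;q-p)=\epsilon$ while $\norm{q-p}=2(1-\epsilon)$, and $K_\Delta(p,q)\to\infty$ as $\epsilon\to 0$. (A uniform bound $K_{\Omega}(p,q)\leq 2\log 2$ would in fact contradict the completeness of $(\Omega,K_\Omega)$ for $\Cb$-proper convex $\Omega$, which you invoke yourself at the end.) Since your construction of $T_n$ and the bound $\rho_n\geq\rho_0$ are obtained by contradiction against this bounded-diameter claim, the second paragraph collapses with it: the inequality $b_n-a_n\geq\norm{\sigma_n(a_n)-\sigma_n(b_n)}/(4C\rho_n^{1/L})\to\infty$ is not by itself contradictory, because $b_n-a_n=K_{\Omega_n}(\sigma_n(a_n),\sigma_n(b_n))$ genuinely does tend to infinity whenever both endpoints approach the boundary.

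What is left unproved is exactly the heart of the proposition: that under the uniform $L$-convexity hypothesis one can choose $T_n\in[a_n,b_n]$ with $\delta_{\Omega_n}(\sigma_n(T_n))$ bounded away from $0$, i.e.\ that a geodesic joining two definitely separated points cannot hug the boundary. This is false without $L$-convexity: in the bidisk the curve $t\mapsto(\gamma(t),1-\epsilon)$, with $\gamma$ a geodesic of $\Delta$, is a geodesic staying within $\epsilon$ of the boundary while traveling Euclidean distance close to $2$, so some quantitative convexity must enter the argument at this point. Establishing the lower bound requires playing a genuine upper bound for $K_{\Omega_n}$ (for instance $K_{\Omega_n}(p,o)\lesssim\log(1/\delta_{\Omega_n}(p))$ via a fixed interior point $o$) against the lower bounds coming from Proposition~\ref{prop:convex_lower_bd_1} and $L$-convexity; this is the actual content of the proof of Proposition 7.8 in~\cite{Z2014}, which the present paper only cites. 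Your third and fourth paragraphs --- equicontinuity from the distance lower bound, Arzel\`a--Ascoli, and the promotion of interiority of $\sigma(0)$ to all of $\sigma(\Rb)$ via completeness and Theorem~\ref{thm:Kob_cont} --- are sound in outline, but they all hinge on the missing uniform lower bound for $\delta_{\Omega_n}(\sigma_n(T_n))$.
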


\begin{remark} Proposition 7.8 in~\cite{Z2014} assumes that $K=B_R(0)$ for some $R>0$. The proof taken verbatim implies the more general case stated here.
\end{remark}

\begin{proposition}\cite[Proposition 7.9]{Z2014}\label{prop:limits_exist}
Suppose $\Omega_n$ is a sequence of $\Cb$-proper convex open sets converging to a $\Cb$-proper convex open set $\Omega$ in the local Hausdorff topology and for any $R>r > 0$ there exists $C=C(R,r) >0$ and $L=L(R,r) \geq 1$ such that 
\begin{align*}
\delta_{\Omega_n}(p; v) \leq C\delta_{\Omega_n}(p)^{1/L}
\end{align*}
for all $n$ sufficiently large, $p \in \Omega_n \cap \{ r \leq \norm{z} \leq R\}$, and $v \in \Cb^d$ non-zero. 

Assume $\sigma_n : \Rb \rightarrow \Omega_n$ is a sequence of geodesics converging locally uniformly to a geodesic $\sigma:\Rb \rightarrow \Omega$. If $t_n \rightarrow \infty$ is a sequence such that $\lim_{n \rightarrow \infty} \sigma_n(t_n) = x_{\infty} \in \overline{\Cb^d}$,  then
\begin{align*}
\lim_{t \rightarrow \infty} \sigma(t) = x_{\infty}.
\end{align*}
\end{proposition}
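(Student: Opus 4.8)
The plan is to show that, in the one-point compactification $\widehat{\Cb^d} = \Cb^d \cup \{\infty\}$, the forward limit set of the geodesic $\sigma$ reduces to the single point $x_\infty$. Since $\Omega$ is $\Cb$-proper, Barth's theorem~\cite{B1980} shows that $(\Omega, K_\Omega)$ is complete; it is moreover locally compact and geodesic with metric topology equal to the Euclidean one, so by Hopf--Rinow its closed balls are compact. Because $K_\Omega(\sigma(0), \sigma(t)) = t \to \infty$, the geodesic $\sigma(t)$ leaves every compact subset of $\Omega$ as $t \to \infty$, and hence the forward limit set
\begin{align*}
\Lambda := \bigcap_{T \geq 0} \overline{\{ \sigma(t) : t \geq T \}}
\end{align*}
(closure in $\widehat{\Cb^d}$) is a nonempty, compact, connected subset of $\partial \Omega \cup \{\infty\}$. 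It therefore suffices to prove that $\Lambda$ is a single point and that this point equals $x_\infty$.

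The engine for both claims is a visibility estimate extracted from the annular $L$-convexity hypothesis: if two points of $\Omega$ lie close to distinct points of $\partial \Omega \cup \{\infty\}$ that are separated inside some annulus $\{ r \leq \norm{z} \leq R \}$, then any Kobayashi geodesic joining them must pass through a compact subset of $\Omega$ depending only on the data. This is the quantitative meaning of the hypothesis $\delta_{\Omega_n}(p;v) \leq C \delta_{\Omega_n}(p)^{1/L}$ and is the mechanism underlying Proposition~\ref{prop:m_convex}; Lemma~\ref{lem:convex_lower_bd_2} supplies the lower bounds that make it effective. Granting this, I would first rule out $\#\Lambda \geq 2$: if $a \neq b$ lie in $\Lambda$ there are interleaved times $s_k < s_k' \to \infty$ with $\sigma(s_k) \to a$ and $\sigma(s_k') \to b$, and visibility applied to $\sigma|_{[s_k,s_k']}$ produces a fixed compact $K_0 \subset \Omega$ and times $u_k \in (s_k, s_k')$ with $\sigma(u_k) \in K_0$. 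Since $u_k \to \infty$ this contradicts the escape property, so $\Lambda = \{w\}$ and $\lim_{t \to \infty} \sigma(t) = w$.

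To identify $w$ with $x_\infty$ I would bring in the approximating sequence. Using that $\sigma_n \to \sigma$ locally uniformly (compatible with the distance convergence of Theorem~\ref{thm:Kob_cont}) together with $\sigma_n(t_n) \to x_\infty$, a diagonal extraction produces indices $n_k \to \infty$ and ordered times $0 < s_k < t_{n_k}$ so that the single geodesic $\sigma_{n_k}$ in $\Omega_{n_k}$ passes through points converging, in order, to $\sigma(0) \in \Omega$, to $w$, and to $x_\infty$. If $w \neq x_\infty$, the visibility estimate applied (now uniformly in $n$) to the sub-segments $\sigma_{n_k}|_{[s_k, t_{n_k}]}$ forces each through a fixed compact $K_0 \subset \Omega$ at some time $v_k \in (s_k, t_{n_k})$; passing to the local-uniform limit via Proposition~\ref{prop:m_convex} and recentering at $v_k$ then yields arbitrarily large times at which $\sigma$ returns to a fixed compact set, once more contradicting escape. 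Hence $w = x_\infty$.

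The main obstacle is establishing the visibility estimate itself and, more delicately, transferring it uniformly to the sequence $\Omega_n$, which converges only in the local Hausdorff topology; this is precisely the point at which the annular $L$-convexity hypothesis and the extraction of Proposition~\ref{prop:m_convex} are indispensable. One must also treat the degenerate cases $x_\infty = \infty$ and boundary points that drift toward the directions $0$ and $\infty$ excluded by the annular hypothesis; here the right move is to exhaust by annuli $\{ r \leq \norm{z} \leq R \}$ and to apply Proposition~\ref{prop:m_convex} only on the compact region actually visited by the relevant geodesic segments.
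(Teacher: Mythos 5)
Your strategy is broadly the right one, but as written there are two genuine gaps. (The paper itself does not reprove this statement --- it quotes \cite{Z2014} --- so I am measuring you against that proof and against the parallel argument given here for Proposition~\ref{prop:well_behaved}.) First, the entire technical content of the proposition sits inside what you call the ``visibility estimate,'' and you do not prove it; you explicitly defer it as ``the main obstacle.'' The tool actually available is Proposition~\ref{prop:m_convex}, and the missing step is the sub-segment selection needed to invoke it: given that $\sigma_{n_k}(s_k)$ and $\sigma_{n_k}(t_{n_k})$ converge to \emph{distinct} points of $\overline{\Cb^d}$, one must produce $[a_k,b_k]\subset[s_k,t_{n_k}]$ with $\sigma_{n_k}([a_k,b_k])$ contained in a fixed annulus $\{r\le\norm{z}\le R\}$ and $\liminf_k\norm{\sigma_{n_k}(a_k)-\sigma_{n_k}(b_k)}>0$ (first/last crossing times of suitable spheres, with cases according to whether the limit points are $0$, $\infty$, or neither). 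Without this, Proposition~\ref{prop:m_convex} cannot be applied; with it, no separate ``visibility'' statement is needed. Note also that your first step runs this machinery on $\sigma$ inside the single domain $\Omega$, which tacitly requires that $\Omega$ inherit the annular $L$-convexity from the $\Omega_n$ --- true, but unaddressed.

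Second, your closing contradiction is misattributed. Recentering via Proposition~\ref{prop:m_convex} yields times $v_k\to\infty$ with $\sigma_{n_k}(v_k)$ converging to a point of $\Omega$; these are parameters of the geodesics $\sigma_{n_k}$, not of $\sigma$, and local uniform convergence of $\sigma_{n_k}$ to $\sigma$ controls nothing at the non-fixed times $v_k$. So this does not give ``arbitrarily large times at which $\sigma$ returns to a fixed compact set'' and does not contradict the escape property of $\sigma$. The correct contradiction is metric: $K_{\Omega_{n_k}}(\sigma_{n_k}(0),\sigma_{n_k}(v_k))=v_k\to\infty$, while both arguments converge to points of $\Omega$ and Theorem~\ref{thm:Kob_cont} forces these distances to converge to the finite number $K_{\Omega}(\sigma(0),\wh{\sigma}(0))$. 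Once repaired, your identification step works --- and then your first step is redundant: since $\overline{\Cb^d}$ is compact, it suffices to show every subsequential limit of $\sigma(t)$ as $t\to\infty$ equals $x_\infty$, which is exactly the single contradiction argument, run directly on the $\sigma_n$, used in \cite{Z2014} and in the proof of Proposition~\ref{prop:well_behaved} above.
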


\begin{remark} Proposition 7.9 in~\cite{Z2014} is slightly less general ($r$ is assumed to be zero), but the proof taken essentially verbatim implies the more general case stated here.
\end{remark}

\begin{proposition}\label{prop:finite_type}
Suppose that $F_n:\Cb^d \rightarrow \Rb_{\geq 0}$ is a sequence of continuous functions which are $C^\infty$ on $\Cb^d \setminus \{0\}$, non-negative, convex, and $F_n(0)=0$.  Assume that the $F_n$ converges locally uniformly to a function $F:\Cb^d \rightarrow \Rb_{\geq 0}$ in the $C^0$ topology on $\Cb^d$ and in $C^\infty$ topology on $\Cb^d \setminus \{0\}$. 

If $\Omega_F$ is $\Cb$-proper and has finite type away from 0, then for any $R >  r > 0$ there exists $N=N(R,r) \geq 1$, $L=L(R,r) \geq 1$, and $C=C(R,r) > 0$ such that 
\begin{align*}
\delta_{\Omega_{F_n}}(p;v) \leq C\delta_{\Omega_{F_n}}(p)^{1/L}
\end{align*}
for all $n \geq N$, $p \in \Omega_{F_n} \cap \{ r \leq \norm{z} \leq R\}$ and $v \in \Cb^{d+1}$ non-zero.
\end{proposition}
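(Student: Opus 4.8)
The plan is to argue by contradiction and to reduce the uniform $L$-convexity estimate to an anisotropic rescaling at a single nonzero boundary point, where finite type away from $0$ can be exploited and where the $C^\infty$ convergence $F_n \rightarrow F$ makes the construction stable in $n$. First I would fix the exponent $L$. Since $r>0$, the set $\partial \Omega_F \cap \{ r/2 \leq \norm{z} \leq 2R\}$ is compact and disjoint from $0$, so $\Omega_F$ has finite type at each of its points; as the D'Angelo type of a convex domain is an upper semicontinuous function of the boundary point, it is bounded above on this compact set by some even integer $2m$, and I would set $L:=2m$ throughout. I would then note it suffices to treat points near $\partial \Omega_{F_n}$: once a uniform bound $\delta_{\Omega_{F_n}}(p;v) \leq M_0$ is known on the annulus, any $p$ with $\delta_{\Omega_{F_n}}(p) \geq \delta_0 > 0$ satisfies $\delta_{\Omega_{F_n}}(p;v) \leq M_0 \leq (M_0 \delta_0^{-1/2m}) \, \delta_{\Omega_{F_n}}(p)^{1/2m}$ automatically.

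Second I would establish the uniform bound $M_0$ on $\delta_{\Omega_{F_n}}(p;v)$ over $p \in \Omega_{F_n} \cap \{ r \leq \norm{z} \leq R\}$, unit $v$, and large $n$. If it failed I would extract $n_k \rightarrow \infty$, points $p_k \rightarrow p_\infty$ with $r \leq \norm{p_\infty} \leq R$, and unit vectors $v_k \rightarrow v_\infty$ with $\delta_{\Omega_{F_{n_k}}}(p_k;v_k) \rightarrow \infty$. By convexity the complex disk of radius $\delta_{\Omega_{F_{n_k}}}(p_k;v_k)$ centered at $p_k$ in the line $p_k + \Cb v_k$ lies in $\Omega_{F_{n_k}}$; since $F_n \rightarrow F$ in $C^0$ yields local Hausdorff convergence $\Omega_{F_n} \rightarrow \Omega_F$, passing to the limit produces the entire complex line $p_\infty + \Cb v_\infty \subset \overline{\Omega_F}$. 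By convexity this line lies either in $\Omega_F$, contradicting $\Cb$-properness, or entirely in $\partial \Omega_F$, in which case $r_F$ vanishes identically along a line through $p_\infty \in \partial \Omega_F \setminus \{0\}$, contradicting finite type away from $0$. This forces the violating sequence in the main argument to approach $\partial \Omega_F$ at a nonzero point.

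Third, assuming the proposition fails with $L=2m$, I would obtain $n_k \rightarrow \infty$, unit vectors $v_k$, and $p_k \in \Omega_{F_{n_k}} \cap \{ r \leq \norm{z} \leq R\}$ with $\delta_{\Omega_{F_{n_k}}}(p_k;v_k) > k \, \delta_{\Omega_{F_{n_k}}}(p_k)^{1/2m}$. The bound $M_0$ forces $\delta_k := \delta_{\Omega_{F_{n_k}}}(p_k) \rightarrow 0$, so $p_k \rightarrow p_\infty \in \partial \Omega_F \setminus \{0\}$, a point of finite type $\leq 2m$ near which $F_n \rightarrow F$ in $C^\infty$; moreover $v_k$ must become asymptotically complex tangential, since a direction with a definite normal component gives $\delta_{\Omega_{F_{n_k}}}(p_k;v_k) = O(\delta_k)$ and hence ratio $\rightarrow 0$. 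Writing $\partial \Omega_{F_{n_k}}$ as a graph near the nearest boundary point $\pi_k \rightarrow p_\infty$ and applying anisotropic dilations $\Lambda_k$ centered at $\pi_k$ that scale the complex normal direction by $\delta_k^{-1}$ and complex tangential directions by $\delta_k^{-1/2m}$, the $C^\infty$ convergence together with the type bound $2m$ guarantees that $\Lambda_k(\Omega_{F_{n_k}})$ converges in the local Hausdorff topology to a model domain $\{ \Imaginary w_0 > Q(\widetilde{w})\}$ for a nonnegative convex polynomial $Q$ of degree $\leq 2m$ which is non-degenerate, hence $\Cb$-proper and of finite type. On the asymptotically tangential directions the balanced dilation $\Lambda_k$ leaves the ratio $\delta_\Omega(p;v)/\delta_\Omega(p)^{1/2m}$ asymptotically invariant, and $\delta_\Omega$ and $\delta_\Omega(\,\cdot\,;\,\cdot\,)$ pass to the limit under local Hausdorff convergence; thus the rescaled ratios converge to $\delta_{\widehat{\Omega}}(\widehat{p}_\infty; \widehat{v}_\infty)/\delta_{\widehat{\Omega}}(\widehat{p}_\infty)^{1/2m}$, which is finite because the finite-type convex model $\widehat{\Omega}$ is $2m$-convex in the sense of Mercer \cite{M1993}. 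This contradicts $k \rightarrow \infty$ and proves the proposition.

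The main obstacle is precisely the uniformity in $n$. For a single fixed domain the estimate is the standard equivalence between finite type and $L$-convexity, but here the type of $\Omega_{F_n}$ at points near $p_\infty$ may differ from that of $\Omega_F$, and the naive transfer through $\lim_n \delta_{\Omega_{F_n}} = \delta_{\Omega_F}$ breaks down because $p_k$ simultaneously approaches the boundary. The anisotropic rescaling is what resolves this, and its success hinges on showing that the limiting polynomial $Q$ is non-degenerate, namely that the leading tangential jet survives the scaling and admits no complex line along which it vanishes to order exceeding $2m$; this is exactly where the a priori bound $2m$ on the type and the $C^\infty$ (rather than merely $C^0$) convergence of $F_n \rightarrow F$ away from $0$ are indispensable.
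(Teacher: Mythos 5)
The paper does not prove this proposition at all: it is quoted as a special case of Proposition 9.3 and Example 9.4 of \cite{Z2014}, so your argument has to be judged against that machinery. Your first two steps are sound (bounding the line type by some $2m$ on the compact annulus of the boundary by upper semicontinuity of the \emph{line} type, which for these convex graphs is the relevant notion, and deriving the uniform bound $M_0$ on $\delta_{\Omega_{F_n}}(p;v)$ from $\Cb$-properness via disks degenerating to a complex line). The genuine gap is in the third step. The dilation $\Lambda_k$ you specify scales \emph{every} complex tangential direction by $\delta_k^{-1/2m}$, and with that choice $\Lambda_k(\Omega_{F_{n_k}})$ does \emph{not} in general converge in the local Hausdorff topology to an open model domain $\{\Imaginary w_0 > Q(\widetilde{w})\}$: any tangential direction in which the boundary has contact order strictly less than $2m$ collapses. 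Concretely, if near $p_\infty$ the boundary is $\Imaginary w_0 = \abs{w_1}^2+\abs{w_2}^{2m}+\cdots$ with $m\geq 2$, then the rescaled defining function in the $w_1$-direction behaves like $\delta_k^{-1}\abs{\delta_k^{1/2m}w_1}^2=\delta_k^{1/m-1}\abs{w_1}^2\rightarrow\infty$, so the rescaled domains intersected with a fixed ball degenerate to a set with empty interior, and there is no $\widehat{\Omega}$ to which non-degeneracy, $\delta_{\widehat{\Omega}}(\widehat{p}_\infty)>0$, or Mercer's $2m$-convexity can be applied. The correct dilations must be adapted to the multitype (different exponents along an extremal basis at each $\pi_k$), which is exactly what \cite[Theorem 10.1]{Z2014} (Theorem~\ref{thm:gaussier} here) provides for a \emph{fixed} domain; you would additionally need the version in which the domain varies with $k$, and none of this is supplied. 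A smaller issue: the asserted ``asymptotic invariance of the ratio'' under $\Lambda_k$ is unjustified (the rescaled direction $\Lambda_k v_k$ need not remain tangential); what is true and would suffice is the one-sided bound $\delta_{\Lambda_k\Omega_{F_{n_k}}}(\Lambda_k p_k;\Lambda_k v_k)\geq \epsilon_k\norm{\Lambda_k v_k}\geq \epsilon_k\delta_k^{-1/2m}\rightarrow\infty$, after which $\Cb$-properness of the limit already yields the contradiction without invoking $2m$-convexity of the model.

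The rescaling can in fact be avoided entirely, which is closer in spirit to the cited result. On the compact set of pairs $(x,v)$ with $x\in\partial\Omega_F\cap\{r/2\leq\norm{z}\leq 2R\}$ and $v$ a unit vector, finite line type $\leq 2m$ gives a uniform lower bound $c_0>0$ on the largest Taylor coefficient of order between $1$ and $2m$ of $\lambda\mapsto r_F(x+\lambda v)$ at $\lambda=0$; the $C^\infty$ convergence $F_n\rightarrow F$ away from $0$ transfers this, with constant $c_0/2$ and uniform bounds on all derivatives up to order $2m+1$, to $r_{F_n}$ at boundary points of $\Omega_{F_n}$ in the annulus for $n$ large. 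Since $h(\lambda)=r_{F_n}(x+\lambda v)$ is convex with $h(0)=0$, one has $\max_{\abs{\lambda}\leq t}h\geq\max_{\abs{\lambda}\leq t}\abs{h}\geq c_1t^{2m}$ for small $t$ (the first inequality because $h(-\lambda_0)\geq -h(\lambda_0)$, the second by comparing $h$ with its degree-$2m$ Taylor polynomial). Combining this with the uniform Lipschitz bound $r_{F_n}(p+\lambda v)\geq r_{F_n}(x+\lambda v)-C'\norm{p-x}$, where $x$ is the nearest boundary point to $p$, forces the line $p+\Cb v$ to exit $\Omega_{F_n}$ within distance $C\,\delta_{\Omega_{F_n}}(p)^{1/2m}$, which is the desired estimate with $L=2m$, uniformly in $n$.
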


This is a special case of Proposition 9.3 and Example 9.4 in~\cite{Z2014}.

\section{Proof of Theorem~\ref{thm:main}}

Suppose $F:\Cb^d \rightarrow \Rb_{\geq 0}$ is a continuous function which is $C^\infty$ on $\Cb^d \setminus \{0\}$, non-negative, convex, and $F(0)=0$. As in the introduction let
\begin{align*}
\Omega_F =  \{ (z_0, \dots, z_d) \in \Cb^{d+1} : \Imaginary(z_0) > F(z_1, \dots, z_d) \}.
\end{align*}
Assume that $\Omega_F$ is $\Cb$-proper, has finite type away from 0, and $F$ is $(\delta_1,\dots, \delta_d)$-homogeneous

Suppose for a contradiction that  $(\Omega_F, K_{\Omega_F})$ is not Gromov hyperbolic. Then there exists points $x_n,y_n, z_n \in \Omega_F$, geodesic segments $\sigma_{x_ny_n}, \sigma_{y_nz_n}, \sigma_{z_nx_n}$ joining them, and a point $u_n$ in the image of $\sigma_{x_ny_n}$ such that
\begin{align*}
K_{\Omega_F}(u_n, \sigma_{y_nz_n} \cup \sigma_{z_nx_n}) > n.
\end{align*}
For $t > 0$, $\Omega_F$ is invariant under the holomorphic transformation 
\begin{align*}
(z_0, z_1, \dots, z_d) \rightarrow \left( t z_0, t^{\delta_1} z_1, \dots, t^{\delta_d} z_d\right).
\end{align*}
So we may assume that $\norm{u_n}=1$. By passing to a subsequence we can also assume that $u_n \rightarrow u_\infty \in \overline{\Omega}_F$. Now there are two cases to consider $u_{\infty} \in \partial \Omega_F$ and $u_{\infty} \in \Omega_F$. 

\subsection{Case 1:} Suppose that $u_{\infty} \in \partial \Omega_F$. Since $\norm{u_{\infty}}=1$ we see that $\partial \Omega_F$ has finite line type at $u_\infty$. 

We will need two results from~\cite{Z2014}:

\begin{theorem}\label{thm:gaussier}\cite[Theorem 10.1]{Z2014} 
Suppose $\Omega \subset \Cb^{d+1}$ is a convex open set such that $\partial \Omega$ is $C^L$ and has finite line type $L$ near some $\xi \in \partial \Omega$. If $q_n \in \Omega$ is a sequence converging to $\xi$, then there exists $n_k \rightarrow \infty$ and affine maps $A_k \in \Aff(\Cb^d)$ such that
\begin{enumerate}
\item $A_k \Omega$ converges in the local Hausdorff topology to a $\Cb$-proper convex open set $\wh{\Omega}$ of the form:
\begin{align*}
\wh{\Omega} = \{ (z_0,z_1 \dots, z_d) \in \Cb^{d} : \Real(z_0)  > P(z_1,z_2, \dots, z_d) \}
\end{align*}
where $P$ is a non-negative non-degenerate convex polynomial with $P(0)=0$,
\item $A_k u_{n_k} \rightarrow u_{\infty} \in \wh{\Omega}$, and
\item for any $R>0$ there exists $C=C(R)>0$ and $N=N(R)>0$ such that 
\begin{align*}
\delta_{\Omega_n}(p; v) \leq C \delta_{\Omega_n}(p)^{1/L}
\end{align*}
for all $n>N$, $p \in B_R(0) \cap \Omega_n$, and $v \in \Cb^d$ non-zero. 
\end{enumerate}
\end{theorem}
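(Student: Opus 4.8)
The plan is to run the anisotropic scaling (dilation) method for convex domains of finite line type and to identify the rescaled limit as a polynomial domain. First I would localize: after an affine change of coordinates sending $\xi$ to the origin and the inward normal to the $\Real(z_0)$-direction, convexity lets me write $\Omega$ near $0$ as a graph domain $\{\Real(z_0) > \varphi(\Imaginary(z_0), z_1,\dots, z_d)\}$ for a convex function $\varphi$ with $\varphi(0)=0$ and $d(\varphi)_0 = 0$. Since $q_n \rightarrow \xi$, I would replace $q_n$ by its nearest boundary point $\xi_n \in \partial\Omega$, set $\epsilon_n := \delta_\Omega(q_n) \rightarrow 0$, and apply the rigid motion taking $\xi_n$ to $0$ with inward normal along $\Real(z_0)$. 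This affects none of the conclusions, as all three are invariant under affine maps.

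The heart of the construction is the choice of dilations $\Lambda_n$. Following the standard recipe for convex domains, I would scale the normal coordinate $z_0$ by $1/\epsilon_n$ and each remaining coordinate $z_j$ by $1/\tau_{n,j}$, where $\tau_{n,j}$ is the largest distance one may travel from $\xi_n$ in the $z_j$-direction while staying within the slab $\{0 < \Real(z_0) < \epsilon_n\} \cap \overline{\Omega}$. Finite line type $L$ forces $\tau_{n,j} \asymp \epsilon_n^{1/m_{n,j}}$ with $2 \leq m_{n,j} \leq L$, so these dilations are well defined and send $q_n$ to Euclidean distance comparable to $1$ from $\partial(\Lambda_n \Omega)$. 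Setting $A_k := \Lambda_{n_k} \circ (\text{rigid motion})$ along a subsequence $n_k$, each $A_k \Omega$ remains convex and the marked point $A_k q_{n_k}$ stays in a fixed compact subset of $\Cb^{d+1}$, which will give the interior convergence asserted in (2).

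Next I would extract the limit. In the scaled coordinates the defining function of $A_k \Omega$ is a normalization of the order-$L$ Taylor expansion of $\varphi$ at $\xi_{n_k}$: because $\partial \Omega$ is $C^L$ and of finite type $L$ at $0$, the terms of degree at most $L$ carry all the geometry, while the rescaled $C^L$-remainder, together with the $\Imaginary(z_0)$-dependence, tends to $0$ locally uniformly. After passing to a further subsequence the normalized Taylor coefficients converge, being bounded by the $C^L$-norm and kept away from $0$ by the definition of $\tau_{n,j}$, so the defining functions converge locally uniformly to a polynomial $P$ of degree at most $L$ with $P(0)=0$. This produces local Hausdorff convergence $A_k \Omega \rightarrow \wh{\Omega} = \{\Real(z_0) > P(z_1,\dots,z_d)\}$, and convexity passes to the limit so $P$ is convex. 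Finite line type at $0$ is exactly what prevents $P^{-1}(0)$ from containing a complex affine line, so $P$ is non-degenerate and $\wh{\Omega}$ is $\Cb$-proper; this gives (1).

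Finally, for the uniform estimate (3) I would invoke the Mercer $L$-convexity bound~\cite{M1993}: a convex domain of finite line type $L$ satisfies $\delta_\Omega(p;v) \leq C\, \delta_\Omega(p)^{1/L}$ locally, with a constant stable under the affine rescalings on compact sets once the directional types have stabilized along the subsequence. The main obstacle I anticipate is precisely this bookkeeping: showing that the dilation exponents $m_{n,j}$ and the associated normalized coefficients stabilize along a subsequence while the higher-order remainder vanishes uniformly, so that the limit is a genuine \emph{non-degenerate} polynomial and the convergence upgrades from a neighborhood of $\xi$ to the local Hausdorff topology on all of $\Cb^{d+1}$. Controlling the mixed off-diagonal terms of the defining function under anisotropic scaling, rather than merely the pure directional orders of contact, is the technically delicate point, and it is the combination of convexity with finite type that makes it go through.
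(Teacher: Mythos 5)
First, a point of comparison: this paper does not prove Theorem~\ref{thm:gaussier} at all. It is quoted from \cite[Theorem 10.1]{Z2014}, and the remark immediately following it records that everything except part (3) follows from an argument of Gaussier \cite{G1997}. Your sketch is the standard anisotropic scaling method for convex domains of finite line type, which is indeed the strategy behind the cited result, so at the level of architecture you are on the right track: localize, recenter at the nearest boundary point $\xi_n$, dilate by $1/\epsilon_n$ normally and by $1/\tau_{n,j}$ tangentially, and pass to a polynomial limit. One correction even at this level: the tangential directions must be chosen adaptively at each $n$ (an $\epsilon_n$-extremal basis in the sense of McNeal), not the fixed coordinates $z_1,\dots,z_d$; otherwise the $\tau_{n,j}$ need not capture the geometry and the normalized Taylor coefficients need not stay bounded away from $0$.

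The two places where you wave your hands, however, are exactly where the content of the theorem lies. For part (3) you propose to ``invoke the Mercer $L$-convexity bound \dots with a constant stable under the affine rescalings on compact sets.'' That stability is not a consequence of Mercer's estimate, which concerns a single fixed domain. Under a complex-linear map $\Lambda$ one has $\delta_{\Lambda\Omega}(\Lambda p;\Lambda v)=\left(\norm{\Lambda v}/\norm{v}\right)\delta_{\Omega}(p;v)$, so the directional distance and the unrestricted distance $\delta_{\Lambda\Omega}(\Lambda p)$ (a minimum over directions) transform incompatibly under the strongly anisotropic dilations $\Lambda_n$; an $L$-convexity constant can and does degenerate under such maps in general (already for ellipsoidal images of the ball). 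A uniform bound $\delta_{\Omega_n}(p;v)\le C\delta_{\Omega_n}(p)^{1/L}$ on $B_R(0)\cap\Omega_n$ with $C$ independent of $n$ therefore has to be proved from the finite line type of $\partial\Omega$ near $\xi$ together with the specific structure of the $\Lambda_n$ --- this is precisely the part the paper flags as \emph{not} following from Gaussier's argument. Similarly, the claim that ``finite line type at $0$ is exactly what prevents $P^{-1}(0)$ from containing a complex affine line'' is too quick: $P$ is a limit of normalized Taylor polynomials at the varying points $\xi_n$, and the normalization only guarantees nonvanishing coefficients along the chosen basis directions, while non-degeneracy must exclude complex lines in every direction. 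The usual way to close this is indirect: if $\wh{\Omega}$ failed to be $\Cb$-proper one pulls the offending line back through the maps $A_k$ via the local Hausdorff convergence and contradicts the finite type of $\partial\Omega$ near $\xi$. Some such argument is needed and is missing from your sketch.
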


\begin{remark}
With the exception of part (3) the above theorem follows from an argument of Gaussier~\cite{G1997}. 
\end{remark}

\begin{proposition}\label{prop:geodesics_well_behaved}\cite[Proposition 12.2]{Z2014}
Suppose $\Omega$ is a domain of the form 
\begin{align*}
\Omega = \{ (z_0, \dots, z_d) \in \Cb^{d+1} : \Real(z_0)  > P(z_1, \dots, z_d) \}
\end{align*}
where $P$ is a non-negative non-degenerate convex polynomial with $P(0)=0$. If $\sigma: \Rb \rightarrow \Omega$ is a geodesic, then $\lim_{t \rightarrow -\infty} \sigma(t)$ and $\lim_{t \rightarrow \infty} \sigma(t)$ both exist in $\overline{\Cb^d}$ and are distinct.
\end{proposition}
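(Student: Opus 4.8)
The plan is to obtain the two limits as a direct application of Proposition~\ref{prop:limits_exist} to the \emph{constant} sequence $\Omega_n \equiv \Omega$, $\sigma_n \equiv \sigma$, and then to rule out the two limits coinciding by a renormalization argument. First I would put ourselves in the setting of Section~5. Applying the $\Cb$-linear biholomorphism $(z_0,z) \mapsto (iz_0,z)$ carries $\Omega$ to a domain of the form $\{\Imaginary(w_0) > P(z)\}$, so we may pass freely between the two normalizations. Since $P$ is non-degenerate, the resulting domain is $\Cb$-proper and of finite type away from $0$, and Proposition~\ref{prop:finite_type} applied to the constant sequence $F_n \equiv P$ shows that for every $R > r > 0$ there are constants $C,L$ with $\delta_\Omega(p;v) \le C\,\delta_\Omega(p)^{1/L}$ on the annulus $\{r \le \norm{z} \le R\}$. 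By Barth's theorem~\cite{B1980} the Kobayashi distance on a $\Cb$-proper convex set is complete, so $\sigma$, being an isometric embedding of $\Rb$, is proper and $\sigma(t)$ leaves every compact subset of $\Omega$ as $t \to \pm\infty$.

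For existence, fix any $t_n \to +\infty$. By compactness of $\overline{\Cb^{d+1}}$ we may pass to a subsequence with $\sigma(t_n) \to x_+$ for some $x_+$, which by properness lies in $\partial\Omega$ (including the point at infinity). Now I apply Proposition~\ref{prop:limits_exist} with $\Omega_n \equiv \Omega$ and $\sigma_n \equiv \sigma$: its hypotheses hold by the previous paragraph, and its conclusion upgrades the subsequential limit to the genuine limit $\lim_{t\to+\infty}\sigma(t) = x_+$. Replacing $\sigma(t)$ by $\sigma(-t)$, which is again a geodesic, gives $\lim_{t\to-\infty}\sigma(t) = x_-$. This establishes existence of both endpoints.

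The remaining, and main, difficulty is to show $x_- \neq x_+$. Suppose for contradiction that $x_- = x_+ =: \xi$. The idea is to blow up at $\xi$ and contradict non-degeneracy. Choosing $q_k = \sigma(k) \to \xi$, I would invoke the rescaling of Theorem~\ref{thm:gaussier} (or, when $\xi$ is the point at infinity, the translations $(z_0,z) \mapsto (z_0 + is, z)$ that preserve $\Omega$) to produce affine maps $A_k$ with $A_k\Omega$ converging to a $\Cb$-proper polynomial model $\wh{\Omega}$ and $A_k q_k \to u_\infty \in \wh{\Omega}$. Applying Proposition~\ref{prop:m_convex} to the geodesics $A_k\circ\sigma$, recentered at suitable times, yields after a subsequence a limiting geodesic line $\wh{\sigma}\colon\Rb \to \wh{\Omega}$. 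Tracking the two ends of $\sigma$ through the maps $A_k$ and applying Proposition~\ref{prop:limits_exist} once more, both ends of $\wh{\sigma}$ should limit to the single boundary point of $\wh{\Omega}$ that is the blow-up image of $\xi$. Convexity of $\wh{\Omega}$ together with coincident endpoints then forces a nontrivial complex-affine disk in $\partial\wh{\Omega}$ through that point; but a $\Cb$-proper finite-type polynomial model has no nonconstant holomorphic disk in its boundary by~\cite[Proposition 7]{NPZ2011}, giving the desired contradiction.

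I expect the last paragraph to contain essentially all of the work. The two delicate points are: (i) arranging the rescaling so that a piece of $A_k\circ\sigma$ of definite diameter remains in a fixed compact set, which is exactly what Proposition~\ref{prop:m_convex} requires; and (ii) controlling the $A_k$-images of the two ends of $\sigma$ well enough to conclude that $\wh{\sigma}$ genuinely has \emph{coincident} endpoints, rather than having the blow-up separate them. Once the flat disk is produced, either the no-disk property of the model or Proposition~\ref{prop:asym_geom} closes the argument; the lower bound of Lemma~\ref{lem:convex_lower_bd_2} is the natural tool for converting the presence of such a disk into the divergence of Kobayashi distances needed to complete the contradiction.
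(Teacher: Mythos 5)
This proposition is not proved in the paper at all --- it is imported from \cite[Proposition 12.2]{Z2014} --- but the paper does prove the parallel statement, Proposition~\ref{prop:well_behaved}, and that proof is the right template to measure your proposal against. Your existence argument (apply Proposition~\ref{prop:limits_exist} to the constant sequence $\Omega_n\equiv\Omega$, $\sigma_n\equiv\sigma$, so that any subsequential limit of $\sigma(t)$ is the full limit) is a legitimate and in fact slightly slicker route than the paper's (which rules out two distinct subsequential limits by extracting, via Proposition~\ref{prop:m_convex}, a limit geodesic at infinite distance from $\gamma(0)$). It does rest on the unproved assertion that non-degeneracy of $P$ gives the finite-type/$L$-convexity hypotheses; this is true for polynomial domains (if every mixed derivative of $r\circ\ell$ vanishes at $0$ then the polynomial $r\circ\ell$ is identically zero, and one must then extract a complex affine line in $P^{-1}(0)$), but it is lemma-level work you have skipped, not a one-line consequence.

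The genuine gap is in the distinctness half. Your pivotal claim --- that a limit geodesic in the blown-up model $\wh{\Omega}$ with coincident endpoints ``forces a nontrivial complex-affine disk in $\partial\wh{\Omega}$'' --- is never justified and is not the mechanism that actually works; nothing in the paper converts coincident geodesic endpoints into a flat boundary disk. The tool that does the job is Proposition~\ref{prop:gromov_prod_finite}: for a geodesic line the Gromov product satisfies $(\gamma(t)\,|\,\gamma(-t))_{\gamma(0)}^{\Omega}=0$, and since $\partial\Omega$ is $C^\infty$ at \emph{every} finite boundary point of a polynomial domain, coincident finite endpoints are immediately impossible --- no rescaling needed. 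That leaves only the case $\xi=\infty$, and here your proposal fails outright: the translations $(z_0,z)\mapsto(z_0+is,z)$ preserve $\Omega$ and do not rescale anything, so $A_k\Omega=\Omega$ and $A_kq_k$ cannot converge to an interior point of a model; Theorem~\ref{thm:gaussier} likewise only rescales at finite boundary points. The paper's proof of Proposition~\ref{prop:well_behaved} disposes of the analogous case using the quasi-homogeneous dilation group $G$, which a general non-degenerate convex polynomial domain does not possess, so the $\xi=\infty$ case genuinely needs an idea your proposal does not contain. You correctly identified the two delicate points (keeping a definite-diameter piece of $A_k\circ\sigma$ in a compact set, and tracking the ends through $A_k$), but acknowledging them is not resolving them.
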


We can now complete the proof of Theorem~\ref{thm:main} in case 1. By passing to a subsequence there exists affine maps $A_n \in \Aff(\Cb^d)$ and a $\Cb$-proper convex open set $\wh{\Omega}$ such that 

\begin{enumerate}
\item $\Omega_n: = A_n \Omega_F \rightarrow \wh{\Omega}$ in the local Hausdorff topology,
\item $A_n u_n \rightarrow u_{\infty} \in \wh{\Omega}$,
\item for any $R>0$ there exists $C, N, L>0$ such that 
\begin{align*}
\delta_{\Omega_n}(p; v) \leq C \delta_{\Omega_n}(p)^{1/L}
\end{align*}
for all $n>N$, $p \in B_R(0) \cap \Omega_n$, and $v \in \Cb^d$ non-zero,
\item if $\sigma: \Rb \rightarrow \wh{\Omega}$ is a geodesic then $\lim_{t \rightarrow -\infty} \sigma(t)$ and $\lim_{t \rightarrow \infty} \sigma(t)$ both exist in $\overline{\Cb^d}$ and are distinct.
\end{enumerate}
By passing to another subsequence we can suppose that $A_n x_n \rightarrow x_{\infty}$, $A_n y_n \rightarrow y_{\infty}$, and $A_n z_n \rightarrow z_{\infty}$ for some $x_{\infty}, y_{\infty}, z_{\infty} \in \overline{\Cb^d} = \Cb^d \cup \{\infty\}$.

Parametrize $\sigma_{x_ny_n}$ such that $\sigma_{x_ny_n}(0)=u_n$ then using Theorem~\ref{thm:Kob_cont} we can pass to a subsequence such that $A_n\sigma_{x_ny_n}$ converges locally uniformly to a geodesic $\sigma: \Rb \rightarrow \wh{\Omega}$. Moreover, by Proposition~\ref{prop:limits_exist}
\begin{align*}
\lim_{t \rightarrow -\infty} \sigma(t) = x_{\infty} \text{ and } \lim_{t \rightarrow +\infty} \sigma(t) = y_{\infty}.
\end{align*}
Then we must have that $x_{\infty} \neq y_{\infty}$. 

So $z_{\infty}$ does not equal at least one of $x_{\infty}$ or $y_{\infty}$. By relabeling we can suppose that $x_{\infty} \neq z_{\infty}$. Since $x_{\infty} \neq z_{\infty}$ at least one is finite and hence by Proposition~\ref{prop:m_convex} we may pass to a subsequence and parametrize $A_n\sigma_{x_nz_n}$ so that it converges locally uniformly to a geodesic $\wh{\sigma}:\Rb \rightarrow \wh{\Omega}$. But then
\begin{align*}
K_{\wh{\Omega}}(u_{\infty},\wh{\sigma}(0)) 
&= \lim_{n \rightarrow \infty} K_{\Omega_n}(A_nu_n, A_n\sigma_{x_n y_n} (0))\\
&= \lim_{n \rightarrow \infty} K_{\Omega_F}(u_n, \sigma_{x_n y_n} (0))\\
& \geq  \lim_{n \rightarrow \infty} K_{\Omega_F}(u_n, \sigma_{x_nz_n})=\infty
\end{align*}
which is a contradiction. 

\subsection{Case 2:} Suppose that $u_{\infty} \in \Omega_F$. By passing to a subsequence we can suppose that $x_n \rightarrow x_{\infty}$, $y_n \rightarrow y_{\infty}$, and $z_n \rightarrow z_{\infty}$ for some $x_{\infty}, y_{\infty}, z_{\infty} \in \overline{\Cb^{d+1}} = \Cb^{d+1} \cup \{\infty\}$. Since 
\begin{align*}
K_{\Omega_F}(u_n, \{x_n,y_n,z_n\}) > n
\end{align*}
we must have that $x_{\infty}, y_{\infty}, z_{\infty} \in \partial \Omega_F \cup \{\infty\}$. 

Fix $R > r > 0$ then by Proposition~\ref{prop:finite_type} there exists $C >0$ and $L \geq 1$ such that 
\begin{align*}
\delta_{\Omega_F}(p; v) \leq C \delta_{\Omega_F}(p)^{1/L}
\end{align*}
for all $p \in \Omega_F \cap \{ r \leq \norm{z} \leq R\}$ and $v \in \Cb^{d+1}$ non-zero. In particular we can apply Proposition~\ref{prop:m_convex} and Proposition~\ref{prop:limits_exist} when taking the limits of geodesics $\sigma_n : \Rb \rightarrow \Omega_F$. 

Now parametrize $\sigma_{x_ny_n}$ such that $\sigma_{x_ny_n}(0)=u_n$ then using the Arzel{\`a}-Ascoli theorem we can pass to a subsequence such that $\sigma_{x_ny_n}$ converges locally uniformly to a geodesic $\sigma: \Rb \rightarrow \Omega_F$. Then by Proposition~\ref{prop:limits_exist}
\begin{align*}
\lim_{t \rightarrow -\infty} \sigma(t) = x_{\infty} \text{ and } \lim_{t \rightarrow +\infty} \sigma(t) = y_{\infty}.
\end{align*}

We now claim:

\begin{proposition}\label{prop:well_behaved}
If $\gamma: \Rb \rightarrow \Omega_F$ is a geodesic, then $\lim_{t \rightarrow -\infty} \gamma(t)$ and $\lim_{t \rightarrow \infty} \gamma(t)$ both exist in $\overline{\Cb^{d+1}}$ and are distinct.
\end{proposition}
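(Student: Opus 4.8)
The goal is to establish that every geodesic $\gamma: \Rb \rightarrow \Omega_F$ has well-defined and distinct endpoints in $\overline{\Cb^{d+1}}$. The natural strategy is to mirror the structure of Case 1 by using the $(\delta_1,\dots,\delta_d)$-homogeneity of $F$ together with the scaling action of the group $G$ to reduce the behavior of $\gamma$ at infinity to a limit domain of the polynomial type appearing in Theorem~\ref{thm:gaussier} and Proposition~\ref{prop:geodesics_well_behaved}. Concretely, the plan is to first show that the limits $\lim_{t \to \pm\infty}\gamma(t)$ exist as points of $\overline{\Cb^{d+1}} = \Cb^{d+1}\cup\{\infty\}$, and then to rule out the possibility that they coincide.

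\textbf{Existence of the endpoints.} First I would argue that $\gamma$ cannot remain trapped in any compact subset of $\Omega_F$: since $(\Omega_F, K_{\Omega_F})$ is a proper geodesic space and $\gamma$ is a full geodesic line, $\norm{\gamma(t)}$ is unbounded as $t\to+\infty$ (and as $t\to-\infty$), so either $\gamma(t)\to\infty$ or $\gamma(t)$ accumulates on $\partial\Omega_F$. To pin down the limit along a sequence $t_n\to+\infty$, I would take a point $\xi$ in the closure of $\gamma([0,\infty))$, use the scaling group $G$ to normalize so that the relevant accumulation point has norm $1$ (exactly as in the reduction that produced $\norm{u_n}=1$), and then invoke the dichotomy of Case~1: the normalized limit point lies either in $\Omega_F$ or on $\partial\Omega_F$, where by hypothesis $\partial\Omega_F$ has finite line type away from $0$. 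Applying Theorem~\ref{thm:gaussier} at such a boundary accumulation point produces affine rescalings $A_n$ under which $A_n\Omega_F$ converges to a $\Cb$-proper polynomial domain $\wh{\Omega}$ and $A_n\gamma$ subconverges to a geodesic $\wh{\gamma}:\Rb\to\wh{\Omega}$. By Proposition~\ref{prop:geodesics_well_behaved} this limit geodesic has well-defined distinct endpoints, and then Proposition~\ref{prop:limits_exist} transports this convergence back to conclude that $\lim_{t\to+\infty}\gamma(t)$ exists; the same argument handles $t\to-\infty$.

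\textbf{Distinctness of the endpoints.} Suppose for contradiction that $\lim_{t\to-\infty}\gamma(t) = \lim_{t\to+\infty}\gamma(t) = p$. Applying the rescaling machinery of Theorem~\ref{thm:gaussier} (after normalizing via $G$ so that the common endpoint sits at a norm-$1$ finite-type boundary point, or handling the point $\infty$ separately) yields a limiting geodesic $\wh{\gamma}$ in a polynomial domain $\wh{\Omega}$ whose two endpoints are, by Proposition~\ref{prop:limits_exist}, the limits of the two endpoints of $\gamma$ — hence equal. But Proposition~\ref{prop:geodesics_well_behaved} asserts that geodesics in $\wh{\Omega}$ have \emph{distinct} endpoints, a contradiction. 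I would therefore conclude that the two endpoints of $\gamma$ are distinct. This matches the logical skeleton of Case~1 almost verbatim, the difference being that here the geodesic $\gamma$ lives in $\Omega_F$ itself rather than in a rescaled limit, so one must be careful to normalize so that the finite-type and uniform $L$-convexity estimates from Proposition~\ref{prop:finite_type} are available on the relevant annular region $\{r\le\norm{z}\le R\}$.

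\textbf{Main obstacle.} The delicate point will be the normalization step in the distinctness argument: when both ends of $\gamma$ converge to the same point $p$, I must arrange via the scaling group $G$ that the blow-up is centered correctly so that Theorem~\ref{thm:gaussier} applies and so that the \emph{two} ends of the rescaled geodesic do not both degenerate to the same single limit in a way that evades the hypotheses of Proposition~\ref{prop:limits_exist}. In particular, one has to verify that the rescaled geodesic $\wh\gamma$ is genuinely a bi-infinite geodesic in $\wh\Omega$ (not a constant or a ray) so that Proposition~\ref{prop:geodesics_well_behaved} can be brought to bear; ensuring the rescaled images $A_n\gamma(\pm T_n)$ stay a definite distance apart, as guaranteed by Proposition~\ref{prop:m_convex}(2), is the crux of making the contradiction go through.
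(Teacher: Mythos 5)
There is a genuine gap, and it sits in the distinctness step. Your plan is to blow up at the common endpoint $p$ via Theorem~\ref{thm:gaussier}, but that theorem requires $\partial\Omega_F$ to be $C^L$ and of finite line type \emph{near the point in question}, and the dangerous cases are exactly $p=0$ and $p=\infty$: at $0$ the function $F$ is merely continuous and the finite-type hypothesis is explicitly only assumed ``away from $0$,'' while $\infty$ is not a boundary point to which the theorem applies at all. Your proposed fix --- use the group $G$ to move the common endpoint to a norm-one boundary point --- fails precisely here, because $G$ fixes both $0$ and $\infty$. The paper's proof supplies the two ideas you are missing: (a) since $(\gamma(t)|\gamma(-t))_{\gamma(0)}^{\Omega_F}=0$ for a geodesic line, Proposition~\ref{prop:gromov_prod_finite} (which you never invoke) rules out a common endpoint at any point of $\partial\Omega_F\setminus\{0\}$, reducing to $\xi\in\{0,\infty\}$; and (b) those two cases are killed by the $(\delta_1,\dots,\delta_d)$-homogeneity, not by Theorem~\ref{thm:gaussier}: rescaling $\gamma$ by $\mathrm{diag}(n,n^{\delta_1},\dots,n^{\delta_d})$ keeps both ends at $0$ but sends $\gamma_n(0)\to\infty$, so Proposition~\ref{prop:m_convex} produces two limit geodesics based at parameters $\alpha_k\to-\infty$ and $\beta_k\to+\infty$ (that $\alpha_k\to-\infty$, $\beta_k\to\infty$ uses Lemma~\ref{lem:convex_lower_bd_2}), whence two points of $\Omega_F$ at infinite Kobayashi distance --- a contradiction. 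Without (a) and (b) the argument does not close.

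The existence step also has a soft spot. Knowing that the rescaled geodesics $A_k\gamma(\cdot+t_{n_k})$ subconverge to a geodesic of $\wh\Omega$ with distinct endpoints controls $\gamma$ only near the times $t_{n_k}$; it does not by itself prevent $\gamma$ from oscillating between two distinct accumulation points $\xi_1\neq\xi_2$, and the phrase ``Proposition~\ref{prop:limits_exist} transports this convergence back'' is not a proof --- that proposition identifies the endpoint of the \emph{limit} geodesic from data along the approximating ones, which is the opposite direction from what you need. The paper's existence argument is direct and does not use Theorem~\ref{thm:gaussier}: if $\gamma(s_n)\to\xi_1\neq\xi_2\leftarrow\gamma(t_n)$, then $\gamma$ must cross a fixed annulus $\{r<\norm{z}<R\}$ on intervals $[a_n,b_n]$ with $a_n\to\infty$ and $\norm{\gamma(a_n)-\gamma(b_n)}$ bounded below, so Proposition~\ref{prop:m_convex} yields times $T_n\to\infty$ with $\gamma(\cdot+T_n)$ converging to a geodesic of $\Omega_F$, forcing $K_{\Omega_F}(\gamma(0),\gamma(T_n))\to\infty$ to have a finite limit --- a contradiction. (Alternatively one can apply Proposition~\ref{prop:limits_exist} to the constant sequence $\sigma_n=\gamma$, using Proposition~\ref{prop:finite_type} for the annular $L$-convexity hypothesis; but that is not the route you describe.)
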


Delaying the proof of Proposition~\ref{prop:well_behaved} we can complete the proof of Theorem~\ref{thm:main}. Now $x_{\infty} \neq y_{\infty}$, so $z_{\infty}$ does not equal at least one of $x_{\infty}$ or $y_{\infty}$. By relabeling we can suppose that $x_{\infty} \neq z_{\infty}$. Since $x_{\infty} \neq z_{\infty}$ at least one is finite and hence by Proposition~\ref{prop:m_convex} we may pass to a subsequence and parametrize $\sigma_{x_nz_n}$ so that it converges locally uniformly to a geodesic $\wh{\sigma}:\Rb \rightarrow \Omega_F$. But then
\begin{align*}
K_{\Omega_F}(u_{\infty},\wh{\sigma}(0)) 
&= \lim_{n \rightarrow \infty} K_{\Omega_F}(u_n, \sigma_{x_n y_n} (0))\\
& \geq  \lim_{n \rightarrow \infty} K_{\Omega_F}(u_n, \sigma_{x_nz_n})=\infty
\end{align*}
which is a contradiction. 

To finish the argument we now prove Proposition~\ref{prop:well_behaved} which will require one result from~\cite{Z2014}. Define the \emph{Gromov product} on $\Omega$ by
\begin{align*}
(p|q)_o^{\Omega} = \frac{1}{2} \Big(K_{\Omega}(p,o)+K_{\Omega}(o,q)-K_{\Omega}(p,q) \Big).
\end{align*}

\begin{proposition}\cite[Proposition 11.3]{Z2014}\label{prop:gromov_prod_finite}
Suppose $\Omega$ is a $\Cb$-proper convex open set. Assume $p_n, q_n \in \Omega$ are sequences with $\lim_{n \rightarrow \infty} p_n = \xi^+ \in \partial \Omega$, $\lim_{n \rightarrow \infty} q_n = \xi^- \in \partial \Omega \cup \{ \infty\}$, and 
\begin{align*}
\liminf_{n,m \rightarrow \infty} \ (p_n | q_m)_o^{\Omega} < \infty.
\end{align*}
If $\partial \Omega$ is $C^2$ near $\xi^+$, then $\xi^+ \neq \xi^-$.
\end{proposition}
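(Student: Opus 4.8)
The plan is to prove the statement by contradiction in its only substantive case. If $\xi^- = \infty$, then since $\xi^+ \in \partial\Omega$ is finite we automatically have $\xi^+ \neq \xi^-$; so I assume both limits are finite and, for a contradiction, that $\xi^+ = \xi^- =: \xi$. I will show that under the $C^2$ hypothesis the Gromov products satisfy $(p_n | q_m)_o^\Omega \to +\infty$ as $n,m \to \infty$, contradicting the assumption that their $\liminf$ is finite. Because affine automorphisms are isometries of $K_\Omega$ and preserve convexity and $\Cb$-properness, I first normalize coordinates so that $\xi = 0$, the inward unit normal is $n_\xi = (i,0,\dots,0)$, and $\Omega$ is locally $\{\Imaginary(z_0) > \phi\}$ for a convex $C^2$ function $\phi \geq 0$ with $\phi(0)=0$ and $d(\phi)_0 = 0$. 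The $C^2$ hypothesis (hence $C^{1,1}$ locally) gives a uniform interior ball condition near $\xi$: there is $\rho>0$ so that for each boundary point $x$ near $\xi$ the ball $B_\rho(x+\rho n_x)$ lies in $\Omega$ and is tangent to $\partial\Omega$ at $x$. For $p\in\Omega$ near $\xi$ I write $x_p\in\partial\Omega$ for the nearest boundary point, so $p = x_p + \delta_\Omega(p)\, n_{x_p}$.

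\textbf{A sharp two-sided distance estimate.} The first key step is to show
\[
 K_\Omega(o,p) = \tfrac12 \log \tfrac{1}{\delta_\Omega(p)} + O(1) \qquad \text{as } p \to \xi.
\]
For the lower bound I use convexity: the real supporting hyperplane at $x_p$ exhibits $\Omega$ as a subset of the complex half-space $\mathcal{H} = \{\, \Real\langle z - x_p, n_{x_p}\rangle > 0\,\}$, which is biholomorphic to a product of a half-plane with $\Cb^{d-1}$. Monotonicity of $K_\Omega$ under inclusion together with the explicit Poincaré distance of the half-plane factor gives $K_\Omega(o,p) \geq K_{\mathcal H}(o,p) \geq \tfrac12 \log\tfrac{1}{\delta_\Omega(p)} - C$, the constant staying bounded because $x_p \to \xi$ and $\partial\Omega$ is $C^1$. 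For the upper bound I use the interior ball: with $c = x_p + \rho\, n_{x_p}$ one has $p \in B_\rho(c)\subset\Omega$, so $K_\Omega(o,p) \leq K_\Omega(o,c) + K_{B_\rho(c)}(c,p) \leq \tfrac12\log\tfrac{1}{\delta_\Omega(p)} + C$, since the second term is computed explicitly and $c \to \xi + \rho\, n_\xi$, a fixed interior point.

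\textbf{The decisive upper bound and conclusion.} Now set $s = s_{n,m} := \max\{\norm{p_n-\xi}, \norm{q_m-\xi}\}$, so that $\delta_\Omega(p_n),\ \delta_\Omega(q_m) \leq s \to 0$. I construct an explicit competitor path: travel from $p_n$ up the normal ray at $x_{p_n}$ to height a fixed large multiple of $s$, at cost $\tfrac12\log\tfrac{s}{\delta_\Omega(p_n)} + O(1)$; do likewise from $q_m$; then connect the two resulting endpoints, which both lie within distance $O(s)$ of $\xi + Cs\, n_\xi$. The interior ball condition forces $\Omega$ to contain a Euclidean ball of radius a fixed multiple of $s$ about this point, so the horizontal connection has Kobayashi length $O(1)$. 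This yields
\[
 K_\Omega(p_n,q_m) \leq \tfrac12\log\tfrac{1}{\delta_\Omega(p_n)} + \tfrac12\log\tfrac{1}{\delta_\Omega(q_m)} + \log s + O(1).
\]
Combining with the two-sided estimate and the definition of the Gromov product gives $(p_n|q_m)_o^\Omega \geq \tfrac12\log\tfrac1s + O(1) \to +\infty$, the desired contradiction.

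\textbf{Main obstacle.} The crux is the last upper bound, a Balogh--Bonk-type estimate: the intermediate height must be taken comparable to the separation $s$ rather than fixed, so that the saving term $\log s$ actually appears, and one must check that the horizontal connection genuinely costs only $O(1)$ — this is exactly where the interior ball furnished by the $C^2$ hypothesis is essential (it is also the step where mere $C^1$ regularity would not suffice, consistent with Proposition~\ref{prop:asym_geom} being the correct statement for the $C^1$ setting). A secondary point, controlled by the $C^1$ regularity of $\partial\Omega$, is that the projections $x_{p_n}, x_{q_m}$ and the normals $n_{x_{p_n}}, n_{x_{q_m}}$ converge to $\xi$ and $n_\xi$, keeping all the $O(1)$ reference quantities bounded as $n,m \to \infty$.
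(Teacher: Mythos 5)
Your argument is correct: the reduction to the case $\xi^+=\xi^-=\xi$ finite, the two-sided estimate $K_\Omega(o,p)=\tfrac{1}{2}\log\tfrac{1}{\delta_\Omega(p)}+O(1)$ (supporting complex half-space from convexity for the lower bound, interior tangent ball from $C^2$ regularity for the upper bound), and the Balogh--Bonk-type bound obtained by climbing the normals to height comparable to $s$ and crossing inside a ball of radius comparable to $s$ — where the sharp radial factor $\tfrac{1}{2}$ in the climbing cost is genuinely needed — together give $(p_n|q_m)_o^{\Omega}\geq \tfrac{1}{2}\log\tfrac{1}{s}+O(1)\rightarrow\infty$, contradicting the $\liminf$ hypothesis. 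Note that this paper does not actually contain a proof of the proposition (it is quoted from \cite{Z2014}), but your localized Balogh--Bonk argument is essentially the mechanism of the cited proof, with the $C^2$ hypothesis entering exactly through the uniform interior ball near $\xi^+$, which legitimately serves both sequences once $\xi^-=\xi^+$.
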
 

\begin{proof}[Proof of Proposition~\ref{prop:well_behaved}]
First suppose that $\lim_{t \rightarrow \infty} \gamma(t)$ does not exist. Then there exists $s_n,t_n \rightarrow \infty$ and $\xi_1, \xi_2 \in \Cb^{d+1} \cup \{\infty\}$ such that 
\begin{align*}
\lim_{n \rightarrow \infty} \gamma(s_n) = \xi_1 \neq \xi_2 = \lim_{n \rightarrow \infty} \gamma(t_n).
\end{align*}
Now  (up to relabeling) either $\xi_1 =0$ and $\xi_2 = \infty$ or $\xi_1 \in \partial \Omega_F \setminus \{0\}$. In either case there exists $R > r > 0$ and $[a_n, b_n] \subset [ \min\{s_n,t_n\}, \infty)$ such that  
\begin{enumerate}
\item $\gamma([a_n,b_n]) \subset \{ r < \norm{z} < R\}$ for large $n$,
\item $\lim_{n \rightarrow \infty} \norm{\gamma(a_n)-\gamma(b_n)} > 0$. 
\end{enumerate}
So by Proposition~\ref{prop:m_convex} we can pass to a subsequence and find $T_n \in [a_n, b_n]$ such that $t \rightarrow \sigma(t+T_n)$ converges locally uniformly to a geodesic $\wh{\gamma}:\Rb \rightarrow \Omega_F$. But then 
\begin{align*}
K_{\Omega_F}(\gamma(0), \wh{\gamma}(0)) = \lim_{n \rightarrow\infty} K_{\Omega_F}(\gamma(0), \gamma(T_n)) \geq \limsup_{n \rightarrow \infty} a_n \geq
 \lim_{n \rightarrow \infty} \min\{t_n, s_n\} = \infty
\end{align*}
which is a contradiction. Thus the limits $\lim_{t \rightarrow -\infty} \gamma(t)$ and $\lim_{t \rightarrow \infty} \gamma(t)$ both exist.

Now suppose for a contradiction that $\lim_{t \rightarrow -\infty} \gamma(t) = \xi = \lim_{t \rightarrow \infty} \gamma(t)$. Notice that
\begin{align*}
\lim_{t \rightarrow \infty} (\gamma(t)|\gamma(-t))_{\gamma(0)}^{\Omega} = \lim_{t \rightarrow \infty} 0 = 0,
\end{align*}
then since $\partial \Omega_F \setminus \{0\}$ is $C^\infty$ Proposition~\ref{prop:gromov_prod_finite} implies that either $\xi=0$ or $\xi=\infty$. 

If $\xi=0$ consider the geodesics 
\begin{align*}
\gamma_n(t):= 
\begin{pmatrix} n & & & \\
& n^{\delta_1} & & \\
& & \ddots & \\
& & & n^{\delta_d} 
\end{pmatrix}\gamma(t).
\end{align*}
Notice that 
\begin{align*}
\lim_{t \rightarrow -\infty} \gamma_n(t) = 0 = \lim_{t \rightarrow \infty} \gamma_n(t)
\end{align*}
for any $n$, but $\lim_{n \rightarrow \infty} \gamma_n(0) = \infty$. Thus by Proposition~\ref{prop:m_convex} we can pass to a subsequence $n_k \rightarrow \infty$ and find $\alpha_k \in (-\infty, 0]$ and $\beta_k \in [0,\infty)$ such that the geodesics $t \rightarrow \gamma_{n_k}(t+\alpha_k)$ and $t \rightarrow \gamma_{n_k}(t+\beta_k)$ converge locally uniformly to geodesics $\wh{\gamma}_1, \wh{\gamma}_2 : \Rb \rightarrow \Omega_F$. Since $\gamma_n(0) \rightarrow \infty$ and $0 \in \partial \Omega_F$ Lemma~\ref{lem:convex_lower_bd_2} implies that $\alpha_k \rightarrow -\infty$ and $\beta_k \rightarrow \infty$. Then 
\begin{align*}
K_{\Omega_F}(\wh{\gamma}_1(0), \wh{\gamma}_2(0)) = \lim_{k \rightarrow \infty} K_{\Omega_F}(\gamma_{n_k}(\alpha_k), \gamma_{n_k}(\beta_k)) = \lim_{k \rightarrow \infty} \beta_k - \alpha_k = \infty
\end{align*}
which is a contradiction. 

The case in which $\xi=\infty$ is shown to be impossible in a similar fashion by considering the geodesics \begin{align*}
\gamma_n(t):= 
\begin{pmatrix} n^{-1} & & & \\
& n^{-\delta_1} & & \\
& & \ddots & \\
& & & n^{-\delta_d} 
\end{pmatrix}\gamma(t).
\end{align*}
Thus $\lim_{t \rightarrow -\infty} \gamma(t)$ and $\lim_{t \rightarrow \infty} \gamma(t)$ both exist in $\overline{\Cb^{d+1}}$ and are distinct.
\end{proof}

\bibliographystyle{amsplain}
\bibliography{complex_kob} 

\end{document}